\documentclass{amsart}

\usepackage[margin=1.2in]{geometry}
\usepackage[english]{babel}

\usepackage{mathtools}
\usepackage{amssymb}
\usepackage{graphicx}
\usepackage{xcolor}
\usepackage{comment}
\usepackage{booktabs}
\usepackage{tabularx}
\usepackage{enumitem}
\usepackage{tikz-cd}
\usepackage{bbm}
\usepackage{shuffle}
\usepackage{mathrsfs}
\usepackage{atbegshi}
\usepackage[autostyle]{csquotes}
\usepackage[toc,page]{appendix}
\usepackage{todonotes}

\usepackage[
    colorlinks,
    citecolor=red,
    urlcolor=blue,
    bookmarks=false,
    hypertexnames=true
]{hyperref}
\usepackage[capitalize]{cleveref}

\newcommand{\vertiii}[1]{%
    {\left\vert\kern-0.25ex
    \left\vert\kern-0.25ex
    \left\vert #1
    \right\vert\kern-0.25ex
    \right\vert\kern-0.25ex
    \right\vert}%
}

\theoremstyle{definition}
\newtheorem{example}{Example}

\theoremstyle{plain}
\newtheorem{theorem}{Theorem}[section]
\newtheorem{remark}[theorem]{Remark}
\newtheorem{lemma}[theorem]{Lemma}
\newtheorem{proposition}[theorem]{Proposition}
\newtheorem{Assumption}[theorem]{Assumption}
\newtheorem{corollary}[theorem]{Corollary}
\newtheorem{definition}[theorem]{Definition}

\newtheoremstyle{break}
    {\topsep}
    {\topsep}
    {\itshape}
    {}
    {\bfseries}
    {.}
    {\newline}
    {}

\newcommand{\R}{\mathbb{R}}

\newcommand{\N}{\mathbb{N}}

\newcommand{\dd}{\mathrm{d}}
\newcommand{\bmu}{\boldsymbol{\mu}}
\newcommand{\bgamma}{\boldsymbol{\gamma}}

\newcommand{\Sig}[1]{\mathrm{Sig}(#1)}
\newcommand{\Sigsym}[1]{\widehat{\mathrm{Sig}}(#1)}
\newcommand{\RP}[1]{\mathbf{#1}}

\newcommand{\gammashuffle}{{\,\bullet\,}}
\newcommand{\gammashufflesym}{{\,\widehat{\gammashuffle}\,}}
\newcommand{\symmetrizer}{{\,\widehat{\cdot}\,}}

\long\def\blue#1{{\color{blue}#1}}
\long\def\red#1{{\color{orange}#1}}
\long\def\orange#1{{\color{orange}#1}}
\long\def\wh#1{\widehat{#1}}

\title{Expected signatures via partial integration, coordinate change and symmetrization}\thanks{\emph{Acknowledgments:} Both authors would like to thank Peter Bank, Christian Bayer and Peter Friz for valuable comments and helpful  discussions.}

\author{Paul P. Hager}
\address{Paul P. Hager, Department of Statistics and Operations Research, University of Vienna,
Kolingasse 14--16, 1090 Vienna, Austria}
\author{Luca Pelizzari}
\address{Luca Pelizzari, Department of Statistics and Operations Research, University of Vienna,
Kolingasse 14--16, 1090 Vienna, Austria}
\date{}

\keywords{Path signature, expected signature, rough paths, moment problems, stochastic control.}

\begin{document}

\begin{abstract}
We study signature transformations of heterogeneous paths $Y=(A,X)$ whose
components may differ in regularity and probabilistic structure. We introduce an
invertible change of coordinates $\Psi$ such that the transformed signature
$\Psi\circ\mathrm{Sig}$ eliminates mixed integration against the irregular
component $X$ and admits a representation in terms of signature coordinates
of $X$ and iterated integration against the regular component $A$. In addition, we exploit this representation to further represent partially symmetrized signatures. Our main
application concerns new expected signature formulas of processes with deterministic
augmentation. On the analytical side, these formulas are leveraged to study moment problems. On
the numerical side, they enable accurate computation of expected signatures, thereby overcoming typical
computational bottlenecks in applications. We illustrate these
advantages in a signature-based stochastic control problem driven by
fractional Brownian motion.
\end{abstract}

\maketitle


\section{Introduction}
Originating in Chen's works
\cite{chen1954iterated,chen1957integration} and later becoming central to
Lyons' development of rough path analysis
\cite{lyons1998differential}, iterated integrals
\[
    Y
    \quad\longmapsto\quad
    \langle i_1,\ldots,i_n,\Sig{Y}_{0,T}\rangle
    :=
    \int_{0<t_1<\cdots<t_n<T}
    \dd Y_{t_1}^{i_1}\cdots \dd Y_{t_n}^{i_n}
\]
are commonly viewed as \emph{monomials} on suitable $d$-dimensional (rough) path spaces:
their linear span is closed under pointwise multiplication, and they separate
points up to tree-like equivalence
\cite{HamLy10,boedihardjo2016signature}.
The collection of these iterated integrals forms an element in the extended tensor algebra
\[
    \Sig{Y}_{0,T}
    :=
    \sum_{w\in\mathcal{W}_{\mathcal{A}}}
    \langle w,\Sig{Y}_{0,T}\rangle e_w
    \in
    T((\mathbb{R}^d))
    :=
    \prod_{n\geq 0}
    (\mathbb{R}^d)^{\otimes n},
\]
called the \emph{signature} of $Y$.
Here
$\mathcal{A}:=\{1,\ldots,d\}$,
$\mathcal{W}_{\mathcal{A}}$ denotes the set of words over $\mathcal{A}$,
and $e_w$ denotes the tensor basis element associated with
$w\in\mathcal{W}_{\mathcal{A}}$, with $e_{\emptyset}:=1$.
We introduce this notation and the underlying tensor-algebraic structure
in more detail below.

The monomial viewpoint extends to the probabilistic setting, where the \emph{expected signature}
\[
    \mathbb{P}
    \quad\longmapsto\quad
    \bmu^{\mathbb{P}}
    :=
    \mathbb{E}_{Y\sim\mathbb{P}}
    \left[
        \Sig{Y}_{0,T}
    \right]
    =
    \sum_{w\in\mathcal{W}_{\mathcal{A}}}
    \mathbb{E}_{Y\sim\mathbb{P}}
    \left[
        \langle w,\Sig{Y}_{0,T}\rangle
    \right]e_w,
\]
whenever well defined, collects the \emph{moments} of the probability distribution
$\mathbb{P}$ on the underlying path space.
In analogy with the classical moment problem, but under a stronger than classic
growth conditions, it was shown in
\cite{chevyrev2016characteristic} that $\bmu^{\mathbb{P}}$ can characterize
$\mathbb{P}$. More precisely, provided $\bmu^{\mathbb{P}}$ has infinite
radius of convergence, that is,
\[
    \sum_{n\geq 0}
    \lambda^n
    \sum_{w\in\mathcal{W}_{\mathcal{A}}^n}
    \left|
        \langle w,\bmu^{\mathbb{P}}\rangle
    \right|
    <
    \infty
    \qquad
    \text{for every } \lambda>0,
\]
where $\mathcal{W}_{\mathcal{A}}^n$ denotes the set of words of length
$n$, the collection of these moments characterizes the distribution
$\Sig{\cdot}_{\#}\mathbb{P}$ and hence $\mathbb{P}$ modulo tree-like
equivalence.

Expected signatures thus feature in a broad range of applications, including
cubature and weak approximation on Wiener space
\cite{lyons2004cubature,gyurko2011efficient,litterer2012recombination,crisan2012solving,bayer2013cubature,shinozaki2017construction,hayakawa2022monte,ferrucci2026high}, the statistical identification, comparison, and dependence analysis of
stochastic-process laws
\cite{bonnier2020signature,salvi2021higherorder,
chevyrev2018signature,bonnier2023adapted,schell2023nonlinear,
andres2024validation,cass2024weighted,bonnier2025proper,
friz2025expectedkernel}, statistical learning, prediction, conditioning,
and generative modelling for sequential data
\cite{levin2013learning,lemercier2021distribution,
liao2024sigwasserstein,
chevyrev2026orthogonal}, the recovery of geometric information on manifolds
\cite{geng2024expected}, expected-signature-based parameter estimation and
signature-based stochastic modelling, calibration, and valuation
\cite{papavasiliou2011parameter,lyons2020numerical,
cuchiero2023signaturemodels,cuchiero2023signature,
abiJaber2025signaturevolatility}, and path-dependent optimization, trading,
and stochastic control
\cite{kalsi2020optimal,cartea2022double,bayer2021optimal,
futter2023signaturetrading,bayer2023primal,
abiJaber2025frictionalhedging,abiJaber2026control}.

This renders the computation of expected signatures a central, but generally
difficult, problem. Beginning with Fawcett's explicit formula for Brownian
motion \cite{fawcett2003problems}, PDE representations were developed for
diffusion processes \cite{ni2012expected}, alongside results for Brownian
motion stopped upon exiting a domain
\cite{lyons2015expected,boedihardjo2021finite,li2022expected,
boedihardjo2026representation}. Further foundational approaches include a signature 
L{\'e}vy--Khintchine-type formula 
\cite{friz2017general}, functional and recursive equations for expected
signatures and signature cumulants of general semimartingales
\cite{friz2022unified,friz2026expected}, linear moment equations based on
polynomial-process methods \cite{cuchiero2023signature}, and formulas for
fractional Brownian motion and more general Gaussian processes based on
covariance and Wiener-chaos methods
\cite{baudoin2007operators,boedihardjo2013expected,cass2024wiener}.
Convergence of numerical approximations based on time discretization and empirical estimation has been studied in
\cite{ni2015concentration,passeggeri2020signature,
lucchese2025learning}.

In multivariate stochastic models, the constituent processes are often
distinguished by their regularity, distributional class, dependence structure,
and purpose. Treating all components uniformly obscures
part of the structure and typically reduces generality in assumptions and numerical tractability. 
In rough path theory, several developments accommodate such heterogeneity.
The $(p,q)$-rough-path construction pairs two components of complementary
variation regularity through Young integration
\cite{lejay2006pq,gassiat2024gaussian}; see also
\cite[Section~9.4]{friz2010multidimensional}. The case of multiple
regularities, already formulated in \cite{lyons1998differential}, was
subsequently developed as $\Pi$-rough paths
\cite{gyurko2016differential} and, in a closely related H{\"o}lder
formulation, as anisotropic geometric rough paths
\cite{tapia2020geometry}.
Drivers of different probabilistic types have also been combined through
joint lifts of Brownian motion with a deterministic and, later, a possibly
correlated stochastic rough path
\cite{crisan2013robust,diehl2015levy,bonesini2024rough}.
Specifically motivated by rough volatility, the works
\cite{bayer2020regularity,fukasawa2024partial} account for the distinct roles
and regularities of log-price and volatility using, respectively, regularity
structures and partial rough paths.
Beyond a purely pathwise treatment, the hybrid framework of
\cite{friz2021rough} systematically distinguishes between semimartingale and
generic rough components, providing a general solution theory for rough
stochastic differential equations.

In signature modelling, however, such heterogeneity has largely remained
unaccounted for, even though it is routinely built into the underlying models
as a deliberate choice, most prominently through augmentation.\footnote{A notable precedent is the stochastic Taylor and cubature
literature, where inhomogeneous truncations assign different weights to time
and Brownian letters according to their distinct scalings
\cite{kloeden1992numerical,lyons2004cubature}. A recent exception is the
computational note \cite{nygaard2026pathsig}, which introduces anisotropic
signature truncations and projections onto prescribed sets of words.}
In this work, we propose the following approach. Suppose that the components
of the underlying process  separate into two groups $Y=(A,X)$,
reflected by a partition of the alphabet
\[
    \mathcal{A}
    =
    \mathcal{A}^{\prime}\mathbin{\dot\cup}\mathcal{A}^{\prime\prime}
    =
    \{\blue{a_1},\ldots,\blue{a_m}\}
    \mathbin{\dot\cup}
    \{\orange{b_1},\ldots,\orange{b_n}\}.
\]
Assuming for the sake of exposition that $Y$ is smooth, but viewing $A$ and
$X$ as placeholders where $X$ is the more irregular component, one may aim
to eliminate all integrations against $\dd X$ from the signature by
repeated partial integration.
Our first representation result facilitates this in a systematic way:
\begin{theorem}\label{thm:1_intro}
    There exists a graded invertible linear map $\Psi:
        T((\mathbb{R}^{d}))
        \to
        T((\mathbb{R}^{d}))$ such that for every smooth path
    $Y=(A,X)$ and any word $w\in\mathcal{W}_{\mathcal{A}}$, it holds that
    \[
        \left\langle
            w,
            \Psi\circ\Sig{Y}_{s,t}
        \right\rangle
        =
        \left\langle
            w_k,
            \Sig{X}_{s,t}
        \right\rangle
        \int_{s<t_1<\cdots<t_k<t}
        \prod_{j=1}^{k}
        \left\langle
            w_{j-1},
            \Sig{X}_{s,t_j}
        \right\rangle
        \dd A^{\blue{i_j}}_{t_j},
    \]
    where we uniquely decompose
    \begin{equation}\label{eq:decomp_intro}
        w
        =
        w_0\blue{i_1}w_1\cdots\blue{i_k}w_k,
        \qquad
        w_0,\ldots,w_k\in\mathcal{W}_{\mathcal{A}^{\prime\prime}},
        \quad
        \blue{i_1},\ldots,\blue{i_k}\in\mathcal{A}^{\prime}.
    \end{equation}
\end{theorem}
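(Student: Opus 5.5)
The plan is to construct $\Psi$ explicitly, word by word, and to check the identity by showing that both sides satisfy the same integral recursion. For a word $w=w_0\blue{i_1}w_1\cdots\blue{i_k}w_k$ we call the right-hand side $F_w(s,t)$. Because the map is linear and graded, it suffices to show that each $F_w(s,t)$ equals a fixed linear combination $\sum_{v}c_{w,v}\langle v,\Sig{Y}_{s,t}\rangle$ over words $v$ of the same length as $w$. The coefficients $c_{w,v}$ are universal, meaning independent of $Y$. We then set $\langle w,\Psi(\mathbf{x})\rangle:=\sum_v c_{w,v}\langle v,\mathbf{x}\rangle$. Invertibility will follow from triangularity: in a suitable order on words of fixed length, $c_{w,w}=1$ and every other $v$ occurring is strictly smaller.

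\textbf{Existence of the expansion.} We use two standard facts for smooth paths. The first is the shuffle identity $\langle u,\Sig{Y}\rangle\langle v,\Sig{Y}\rangle=\langle u\shuffle v,\Sig{Y}\rangle$. The second is that $\int_s^t\langle u,\Sig{Y}_{s,r}\rangle\,\dd Y^{i}_r=\langle ui,\Sig{Y}_{s,t}\rangle$. Note that $\langle w_j,\Sig{X}_{s,\cdot}\rangle=\langle w_j,\Sig{Y}_{s,\cdot}\rangle$, since $w_j$ only contains letters of $\mathcal{A}''$.

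We argue by induction on $k$. Write $G_{k}(t)$ for the iterated integral $\int_{s<t_1<\dots<t_k<t}\prod_{j=1}^k\langle w_{j-1},\Sig{X}_{s,t_j}\rangle\,\dd A^{\blue{i_j}}_{t_j}$. It satisfies
\[
G_k(t)=\int_s^t G_{k-1}(r)\,\langle w_{k-1},\Sig{Y}_{s,r}\rangle\,\dd A^{\blue{i_k}}_r .
\]
Suppose inductively that $G_{k-1}(r)$ is a linear combination of signature coordinates $\langle v,\Sig{Y}_{s,r}\rangle$. Then the integrand becomes, via the shuffle identity, the linear combination $\langle \ell\shuffle w_{k-1},\Sig{Y}_{s,r}\rangle$, where $\ell$ is the corresponding linear combination of words. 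Integrating against $\dd A^{\blue{i_k}}$ appends the letter $\blue{i_k}$. Finally, multiplying by $\langle w_k,\Sig{X}_{s,t}\rangle$ is another shuffle. This gives the recursive formula
\[
\Psi^*(w)=\Big(\big(\cdots((w_0\blue{i_1})\shuffle w_1)\blue{i_2}\cdots\shuffle w_{k-1}\big)\blue{i_k}\Big)\shuffle w_k ,
\]
where $\Psi^*$ is the adjoint action on words. The identity then holds for all $s<t$ by construction. Every word in this expansion has length $|w|$, so $\Psi$ is graded.

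\textbf{Invertibility, the main step.} We must show that the adjoint $\Psi^*$ restricted to each finite-dimensional space of words of length $N$ is bijective. Our first choice is to use an ordering that compares the positions of the $\mathcal{A}'$-letters. Each shuffle with $w_j$ keeps the relative order of the letters but can move $\mathcal{A}''$-letters of $w_j$ to positions before earlier $\mathcal{A}'$-letters. The unique term in which every shuffle places $w_j$ entirely to the right is exactly $w$, with coefficient $1$. Every other term has some $\mathcal{A}''$-letter moved left of an $\mathcal{A}'$-letter, so its $\mathcal{A}'$-letters sit at strictly later positions.

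Concretely, we order words of length $N$ by the vector of positions of their $\mathcal{A}'$-letters; a natural choice is lexicographic comparison from the last $\mathcal{A}'$ letter. Ties are broken arbitrarily among words with identical position vectors. We must check that no term other than $w$ has the same position vector as $w$. This holds because such a term would have to keep every $w_j$ to the right of the preceding $\mathcal{A}'$-letter and to the left of the next one, and that forces the term to be $w$ itself. With this, $\Psi^*$ is unitriangular and hence invertible on each graded piece, so $\Psi$ is invertible on $T((\mathbb{R}^d))$.

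I expect the main obstacle to be making this triangularity bookkeeping precise, in particular tracking positions through the nested shuffles. A clean alternative would be to define the inverse recursively by the same induction. For this, observe that $\langle w_0\blue{i_1}\cdots\blue{i_k}w_k,\Sig{Y}\rangle$ equals $F_w$ minus terms with fewer $\mathcal{A}''$-letters after the last $\mathcal{A}'$-letter. Those terms are handled by a double induction on $k$ and on $|w_k|$.
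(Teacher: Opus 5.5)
Your proof is correct, but the invertibility step takes a different route from the paper's. The construction of $\Psi$ and the verification of the identity by induction on $k$ are essentially the paper's \Cref{prop:psi_inv}: it uses the same recursion $\Psi_w=\bigl((\Psi_{w_0\blue{i_1}\cdots\blue{i_{k-1}}w_{k-1}})\blue{i_k}\bigr)\shuffle w_k$, proved the same way.

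The paper handles invertibility in \Cref{prop:Psi} and \Cref{prop:main_result}. It defines candidate rows $\Psi^{-1}_w$ by an implicit double recursion in $k$ and $|w_k|$, which is essentially your ``clean alternative''. It then checks $\langle \Psi^{-1}_w,\Sig{Y}\rangle_\Psi=\langle w,\Sig{Y}\rangle$ only on signatures. To conclude $\Psi^{-1}\circ\Psi=\mathrm{Id}$ on all of $T((\mathbb{R}^d))$ it needs the external fact that truncated signatures span each truncated tensor algebra \cite{diehl2019invariants}. So your alternative on its own would also need that spanning lemma.

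Your unitriangularity argument is purely combinatorial and self-contained. Along the nested shuffles, the position vector of the $\mathcal{A}'$-letters can only increase componentwise. Equality in every component forces each $w_j$ to be appended after $\blue{i_j}$, so $w$ occurs exactly once with coefficient one. This turns the shear intuition of \Cref{rem:shear_like}, which the paper only sketches, into a proof, and it shows $\Psi^{-1}$ is unitriangular in the same order.

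What the paper's route buys is an explicit recursion for the rows $\Psi^{-1}_w$, which is what is used to transform expected signatures back to standard coordinates. Your approach yields the inverse only implicitly, by back-substitution.

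One small bookkeeping point: first group words by the number (or multiset) of $\mathcal{A}'$-letters, which $\Psi^\ast$ preserves. That way the position vectors you compare lexicographically always have the same length.
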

This representation has several immediate consequences in relation to the
aforementioned works on joint rough path lifts.
Indeed, if $X$ is enhanced to a geometric $p$-rough path and $A$ has complementary
$q$-variation, the right-hand side remain well defined through
Young integration and yields the canonical joint lift of $(A,X)$
\cite{lejay2006pq,friz2010multidimensional}. If $A$ is instead a
semimartingale, the integrals against $A$ may be interpreted stochastically,
recovering the corresponding joint stochastic lifts
\cite{diehl2015levy,bonesini2024rough} and fitting into the hybrid framework
of \cite{friz2021rough}. 

Moreover, the representation above directly exhibits the form obtained by
symmetrizing over the partial alphabet $\mathcal{A}^{\prime\prime}$ and thereby makes
transparent its relation to the partial rough path lift of
\cite{fukasawa2024partial}, in which $X$ is lifted only through multivariate
monomials.
With details postponed to a later section, denote by
$T_{\mathcal{A}^{\prime\prime}}((\mathbb{R}^{d}))$ the quotient algebra
obtained by imposing commutativity among the letters of
$\mathcal{A}^{\prime\prime}$, and by $\Sigsym{Y}$
the corresponding partially symmetrized signature. We then obtain the
following representation.
\begin{corollary}\label{cor:1_intro}
    There exists a graded invertible linear map
    $\widehat{\Psi}:
        T_{\mathcal{A}^{\prime\prime}}((\mathbb{R}^{d}))
        \to
        T_{\mathcal{A}^{\prime\prime}}((\mathbb{R}^{d}))$
    such that, for every smooth path $Y=(A,X)$ and any word
    $w\in\mathcal{W}_{\mathcal{A}}$ with decomposition as in \eqref{eq:decomp_intro}, it holds that
    \[
        \left\langle
            [w_0]\blue{i_1}[w_1]\cdots\blue{i_k}[w_k],
            \widehat{\Psi}\circ\Sigsym{Y}_{s,t}
        \right\rangle
        =
        \frac{X_{s,t}^{[w_k]}}{[w_k]!}
        \int_{s<t_1<\cdots<t_k<t}
        \prod_{j=1}^{k}
        \frac{X_{s,t_j}^{[w_{j-1}]}}{[w_{j-1}]!}
        \dd A^{\blue{i_j}}_{t_j},
    \]
    where, for $v\in\mathcal{W}_{\mathcal{A}^{\prime\prime}}$,
    \[
        X_{s,t}^{[v]}
        :=
        \prod_{\orange{i}\in\mathcal{A}^{\prime\prime}}
        \left(X_{s,t}^{\orange{i}}\right)^{\alpha_{\orange{i}}},
        \qquad
        [v]!
        :=
        \prod_{\orange{i}\in\mathcal{A}^{\prime\prime}}
        \alpha_{\orange{i}}!,
    \]
    with $\alpha_{\orange{i}}$ denoting the multiplicity of
    $\orange{i}$ in $v$.
\end{corollary}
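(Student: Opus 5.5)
The plan is to derive the corollary from \cref{thm:1_intro} by pushing everything through the canonical projection $\pi: T((\mathbb{R}^d))\to T_{\mathcal{A}^{\prime\prime}}((\mathbb{R}^d))$. This projection identifies words that differ only by permutations of $\mathcal{A}^{\prime\prime}$-letters inside each maximal $\mathcal{A}^{\prime\prime}$-block. Its dual is the symmetrization on linear functionals: a class $[w_0]\blue{i_1}[w_1]\cdots\blue{i_k}[w_k]$ pairs with $\pi(\mathbf{x})$ as the sum of $\langle w_0'\blue{i_1}\cdots\blue{i_k}w_k',\mathbf{x}\rangle$ over all distinct rearrangements $w_j'$ of each $w_j$. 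The exact normalisation depends on the convention for $T_{\mathcal{A}^{\prime\prime}}$ fixed later in the paper. By definition, $\Sigsym{Y}=\pi\circ\Sig{Y}$.

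\textbf{Step 1: descend $\Psi$ to the quotient.} I need that $\Psi$ maps $\ker\pi$ into $\ker\pi$, so that $\widehat\Psi$ is well defined by $\widehat\Psi\circ\pi=\pi\circ\Psi$. I would check this from the explicit construction of $\Psi$. It is built blockwise from partial integration, acting on each block decomposition $w_0\blue{i_1}\cdots w_k$ through shuffle-type operations on the $\mathcal{A}^{\prime\prime}$-blocks that do not depend on the internal order of $\mathcal{A}^{\prime\prime}$-letters. Hence $\Psi$ commutes with the action of the product of symmetric groups permuting letters within blocks. A linear map commuting with this action preserves the span of differences $w-\sigma w$, which is $\ker\pi$.

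Gradedness and invertibility pass to the quotient. $\Psi$ is graded with inverse $\Psi^{-1}$, and $\Psi^{-1}$ commutes with the same group action. So $\Psi^{-1}$ also descends, and its descent is the inverse of $\widehat\Psi$. I expect Step 1 to be the main obstacle. If the explicit form of $\Psi$ is opaque, my fallback is to show equivariance directly: the defining identity of \cref{thm:1_intro} is visibly invariant under permuting letters within each $w_j$, after summing. Combined with the fact that signatures span the dual in each degree, this yields equivariance on the range, and extends to the whole space by a graded density argument.

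\textbf{Step 2: compute coordinates.} For a class $[w_0]\blue{i_1}\cdots[w_k]$, I pair with $\widehat\Psi\circ\Sigsym{Y}=\pi\circ\Psi\circ\Sig{Y}$. This gives the sum over rearrangements $w_j'$ of the right-hand side of \cref{thm:1_intro}. Because the expression there is multilinear in the block functionals $\langle w_j',\Sig{X}\rangle$, the sum factorizes into a product over $j$ of $\sum_{w_j'}\langle w_j',\Sig{X}_{s,t_{j+1}}\rangle$, with $t_{k+1}:=t$.

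The shuffle identity then finishes the computation. Summing $\langle v',\Sig{X}_{s,u}\rangle$ over distinct rearrangements $v'$ of $v$ gives $X^{[v]}_{s,u}/[v]!$, since $\prod_i (X^{i}_{s,u})^{\alpha_i}$ equals $\prod_i\alpha_i!$ times this sum. Substituting this under the $\dd A$-integrals yields the claimed formula. The last check is that the normalisation of the pairing on $T_{\mathcal{A}^{\prime\prime}}$ agrees with this "sum over distinct rearrangements" convention, and I would adjust by a diagonal constant if it does not.
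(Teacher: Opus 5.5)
Your overall architecture matches the paper's. You descend $\Psi$ to the quotient via invariance of $\ker\pi$ (\Cref{lem:partial_symmetrization_ideal}, \Cref{prop:partial_symmetrized_hopf}), then sum the coordinates of \Cref{prop:psi_inv} over class representatives and apply the shuffle identity blockwise (the proof of \eqref{def:gamma_signature_sym}). Step 2 is correct as written. With the paper's convention $\langle [w],\widehat{\mathbf a}\rangle=\sum_{[v]=[w]}\langle v,\mathbf a\rangle$, no rescaling is needed. The gap is in Step 1, which is the only non-formal part of the corollary.

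The claim that $\Psi$ commutes with ``the'' within-block permutation action is false, and not even well posed. The permutation group depends on the block structure of the word, and $\Psi$ changes block structures. Take $\mathcal{A}'=\{a\}$ and $\mathcal{A}''=\{b,c\}$. Then $\Psi_{abc}=a\shuffle bc=abc+bac+bca$ and $\Psi_{acb}=acb+cab+cba$. Swapping $b,c$ inside the terminal block of $abc$ sends the term $bac$ to $cab$, where $b$ and $c$ sit in different blocks. So the rows do not intertwine within-block permutations, and the inference ``commutes with the action, hence preserves the span of $w-\sigma w$'' has no content. What is true is only that full class sums go to full class sums, e.g.\ $\Psi^\ast(abc+acb)=(abc+acb)+bac+cab+(bca+cba)$.

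The missing idea is the argument for exactly this. Since $\ker\pi=(\operatorname{im}\symmetrizer^\ast)^\perp$, it suffices that $\Psi^\ast$ preserves the span of the class sums $\widehat{[w]}^\ast$. Summing \eqref{eq:Psi_inverse_explicit} over the class of $w$ gives $\Psi^\ast(\widehat{[w]}^\ast)=\bigl(\Psi^\ast(\symmetrizer^\ast([w_0]i_1\cdots i_{k-1}[w_{k-1}]))\,i_k\bigr)\shuffle\widehat{[w_k]}^\ast$. Induction on $k$ then works for two reasons. Appending an $\mathcal{A}'$-letter preserves $\operatorname{im}\symmetrizer^\ast$. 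Crucially, $\operatorname{im}\symmetrizer^\ast$ is closed under $\shuffle$, which is equivalent to $\ker\pi$ being a coideal for $\Delta_{\shuffle}$ (it is generated by the primitive commutators $e_ie_j-e_je_i$).

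Invariance under $\Psi^{-1}$ then follows from $\Psi(\ker\pi)\subseteq\ker\pi$, because $\Psi$ is a graded automorphism and each level is finite dimensional. Your assertion that the implicitly defined $\Psi^{-1}$ ``commutes with the same action'' is unsupported.

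Your fallback does not avoid this. The summed identity is a function of $[w]$ by construction, so its ``invariance'' says nothing. To conclude $\Psi^\ast(\widehat{[w]}^\ast)\in\operatorname{im}\symmetrizer^\ast$ via the spanning property, you must write $\tfrac{X^{[w_k]}}{[w_k]!}\int\prod_j\tfrac{X^{[w_{j-1}]}}{[w_{j-1}]!}\,\dd A^{i_j}$ as $\langle\beta,\Sig{Y}\rangle$ with $\beta=\bigl(\cdots(\widehat{[w_0]}^\ast i_1)\shuffle\widehat{[w_1]}^\ast\cdots i_k\bigr)\shuffle\widehat{[w_k]}^\ast$. You then need to know that $\beta$ is block-symmetric, which is the same shuffle-closure fact.
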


Returning to our original motivation of computing expected signatures, the
representation in \Cref{thm:1_intro} suggests conditioning on $A$ as a
natural computational step. More concretely, suppose that $\mathbf{X}$ is a
random geometric $p$-rough path and that $A$ is deterministic with
complementary $q$-variation. The signature of $Y=(A,\mathbf{X})$ may then be
defined directly through the representation in \Cref{thm:1_intro}. For the
transformed expected signature $\Psi\circ\bmu^{\mathbb{P}}$, the expectation
can be taken directly inside the iterated integrals with respect to $A$.

\begin{corollary}\label{cor:expected_signature_intro}
    Let $\mathbb{P}$ be the law of $Y=(A,\mathbf{X})$, where $\mathbf{X}$ is
    a random geometric $p$-rough path and $A$ is deterministic with
    complementary $q$-variation. Suppose that the expected signature
    $\bmu^{\mathbb{P}}$ and the integrals below are well defined.
    Then, for every word
    $w=w_0\blue{i_1}w_1\cdots\blue{i_k}w_k$ with decomposition as in
    \eqref{eq:decomp_intro}, it holds that
    \begin{equation}\label{eq:exp_signature_Psi_intro}
        \left\langle
            w,
            \Psi\circ\bmu^{\mathbb{P}}
        \right\rangle
        =
        \int_{0<t_1<\cdots<t_k<T}
        \mathbb{E}_{Y\sim\mathbb{P}}
        \left[
            \prod_{j=1}^{k+1}
            \left\langle
                w_{j-1},
                \Sig{\mathbf{X}}_{0,t_j}
            \right\rangle
        \right]
        \dd A^{\blue{i_1}}_{t_1}
        \cdots
        \dd A^{\blue{i_k}}_{t_k},
        \qquad
        t_{k+1}:=T.
    \end{equation}
\end{corollary}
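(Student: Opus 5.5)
The plan is to deduce the corollary from \Cref{thm:1_intro}, applied pathwise, and then interchange expectation with the deterministic iterated integration against $A$. Since $\Psi$ is graded and linear, on each tensor level it is a finite linear combination of coordinates. Hence $\langle w,\Psi\circ\bmu^{\mathbb{P}}\rangle$ is a finite linear combination of entries $\langle v,\bmu^{\mathbb{P}}\rangle=\mathbb{E}[\langle v,\Sig{Y}_{0,T}\rangle]$ with $|v|=|w|$. By linearity of expectation, and because these finitely many expectations are assumed finite, we obtain $\langle w,\Psi\circ\bmu^{\mathbb{P}}\rangle=\mathbb{E}[\langle w,\Psi\circ\Sig{Y}_{0,T}\rangle]$.

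Next, I would use that the signature of $Y=(A,\mathbf{X})$ is \emph{defined} through the representation of \Cref{thm:1_intro}, with the integrals understood in the Young sense. For each fixed realisation this gives
\[
\langle w,\Psi\circ\Sig{Y}_{0,T}\rangle=\langle w_k,\Sig{\mathbf{X}}_{0,T}\rangle\int_{0<t_1<\cdots<t_k<T}\prod_{j=1}^k\langle w_{j-1},\Sig{\mathbf{X}}_{0,t_j}\rangle\,\dd A^{i_1}_{t_1}\cdots\dd A^{i_k}_{t_k}.
\]
The factor $\langle w_k,\Sig{\mathbf{X}}_{0,T}\rangle$ does not depend on $t$, so it can be moved inside the integral as the $(k+1)$-st factor with $t_{k+1}=T$. (If the smooth-path identity were taken as primary, one would instead pass to the limit along smooth approximations, using continuity of Young integration in $p$/$q$-variation. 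I would only use this if needed.)

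The main step, and the main obstacle, is the Fubini-type interchange
\[
\mathbb{E}\int F(t_1,\dots,t_k)\,\dd A^{i_1}_{t_1}\cdots\dd A^{i_k}_{t_k}=\int\mathbb{E}[F(t_1,\dots,t_k)]\,\dd A^{i_1}_{t_1}\cdots\dd A^{i_k}_{t_k},
\]
where $F(t_1,\dots,t_k)=\prod_{j}\langle w_{j-1},\Sig{\mathbf{X}}_{0,t_j}\rangle$ and $A$ is deterministic but only of finite $q$-variation, not bounded variation. I would proceed by induction on $k$, integrating one variable at a time, and approximate each Young integral by Riemann sums over partitions with vanishing mesh. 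For Riemann sums, expectation commutes with the finite sums trivially. On the deterministic side, $t\mapsto\mathbb{E}[F(\dots,t,\dots)]$ has finite $p$-variation, since the moments of the relevant $p$-variation norms of $\Sig{\mathbf{X}}$ are finite; this is part of the hypothesis that the integrals are well defined. Hence the right-hand Riemann sums converge to the Young integral. On the random side, the Riemann sums converge pathwise, and the Young--Loève estimate bounds them uniformly by $C\|F\|_{p\text{-var}}\|A\|_{q\text{-var}}$. This bound is integrable under the standing well-definedness assumption, so dominated convergence yields convergence in $L^1$. Matching the two limits gives the claim, which is exactly \eqref{eq:exp_signature_Psi_intro}.

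If one prefers to avoid the integrability bookkeeping, an alternative is to interpret ``well defined'' as precisely this interchange being justified. In that case the corollary is an immediate consequence of \Cref{thm:1_intro} and linearity.
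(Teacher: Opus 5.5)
Your proposal is correct and follows the paper's route. The paper states this result (as Proposition~\ref{prop:exp_sig_prop}) as an immediate consequence of the pathwise $\Psi$-representation, namely Proposition~\ref{prop:psi_inv} extended to $(A,\mathbf{X})$ in Corollary~\ref{cor:rough_gamma}, together with linearity of expectation, and it treats the interchange of $\mathbb{E}$ with the deterministic $\dd A$-integrals as covered by the ``whenever well defined'' hypothesis. Your Riemann-sum and dominated-convergence justification of that interchange adds detail the paper omits, but note that it needs $\mathbb{E}\bigl[\|F\|_{\infty}+\|F\|_{p\text{-var}}\bigr]<\infty$ at each stage of the induction. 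That is an extra integrability assumption, not something that follows from the deterministic integrals being well defined.
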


After partial symmetrization, the situation becomes even more concrete.
Suppose that $A$ is a deterministic path of bounded variation. Recall that,
by the representation in \Cref{cor:1_intro}, we may define $\Sigsym{Y}$ for
$Y=(A,X)$ whenever $X$ is a continuous stochastic process. Writing
$$
    \widehat{\bmu}^{\mathbb{P}}
    =
    \mathbb{E}_{Y\sim\mathbb{P}}\big[\Sigsym{Y}\big]
    \in T_{\mathcal{A}^{\prime\prime}}((\mathbb{R}^d)),
$$
we obtain a concrete expression for its $\widehat{\Psi}$-transform in terms
of multivariate increment correlators
\begin{equation}\label{eq:intro_correlators}
    \wh{\mathcal{C}}^{[w]}_{0,T}(t_1,\dots,t_k)
    =
    \mathbb{E}_{Y\sim\mathbb{P}}
    \left[
        \prod_{j=1}^{k+1}
        \frac{1}{[w_{j-1}]!}
        X_{0,t_j}^{[w_{j-1}]}
    \right],
    \qquad
    t_{k+1}=T,
    \quad
    w_0,\dots,w_k\in\mathcal{W}_{\mathcal{A}^{\prime\prime}}.
\end{equation}

\begin{corollary}\label{cor:expected_symmetrized_signature_intro}
    Let $\mathbb{P}$ be the law of $Y=(A,X)$, where $X$ is a continuous
    stochastic process and $A$ is deterministic with bounded variation.
    Suppose that the correlators in \eqref{eq:intro_correlators} are finite
    and continuous. Then, for every word
    $w=w_0\blue{i_1}w_1\cdots\blue{i_k}w_k$ with decomposition as in
    \eqref{eq:decomp_intro}, it holds that
    \begin{equation}\label{eq:exp_sig_symm_intro}
        \left\langle
            [w_0]\blue{i_1}[w_1]\cdots\blue{i_k}[w_k],
            \widehat{\Psi}\circ\widehat{\bmu}^{\mathbb{P}}
        \right\rangle
        =
        \int_{0<t_1<\cdots<t_k<T}
        \wh{\mathcal{C}}^{[w]}_{0,T}(t_1,\dots,t_k)
        \dd A^{\blue{i_1}}_{t_1}
        \cdots
        \dd A^{\blue{i_k}}_{t_k}.
    \end{equation}
\end{corollary}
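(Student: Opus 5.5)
The plan is to apply the pathwise representation of Corollary~\ref{cor:1_intro} realization by realization, take expectations, and then move the expectation inside the iterated Stieltjes integrals with respect to the deterministic path $A$ using Fubini's theorem.

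\emph{Step 1: identify the symmetrized signature.} For a continuous stochastic process $X$ and deterministic bounded variation $A$, the symmetrized signature $\Sigsym{Y}_{0,T}$ is defined through the representation of Corollary~\ref{cor:1_intro}. That is, $\Sigsym{Y}_{0,T} := \widehat{\Psi}^{-1}$ applied to the element whose $[w_0]\blue{i_1}[w_1]\cdots\blue{i_k}[w_k]$-coordinate equals
\[
\frac{X_{0,T}^{[w_k]}}{[w_k]!}\int_{0<t_1<\cdots<t_k<T}\prod_{j=1}^k \frac{X_{0,t_j}^{[w_{j-1}]}}{[w_{j-1}]!}\,\dd A^{\blue{i_1}}_{t_1}\cdots \dd A^{\blue{i_k}}_{t_k}.
\]
This is well defined pathwise, since the integrand is continuous in $(t_1,\dots,t_k)$ and $A$ has bounded variation. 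Consequently, the $\widehat\Psi$-transform of $\Sigsym{Y}_{0,T}$ has exactly these coordinates, almost surely.

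\emph{Step 2: commute $\widehat{\Psi}$ with the expectation.} Since $\widehat\Psi$ is graded and linear, each coordinate of $\widehat\Psi\circ\Sigsym{Y}$ is a finite linear combination of coordinates of $\Sigsym{Y}$ of the same degree, and the same holds for $\widehat\Psi^{-1}$. Hence $\widehat\Psi\circ\widehat\bmu^{\mathbb{P}} = \mathbb{E}[\widehat\Psi\circ\Sigsym{Y}]$ coordinatewise, provided the coordinates of $\widehat\Psi\circ\Sigsym{Y}$ are integrable. Integrability is verified in Step~3.

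\emph{Step 3: Fubini.} It remains to show
\[
\mathbb{E}\Big[\int_{\Delta_k} F(t_1,\dots,t_k)\,\dd A^{\blue{i_1}}_{t_1}\cdots \dd A^{\blue{i_k}}_{t_k}\Big] = \int_{\Delta_k}\mathbb{E}[F(t_1,\dots,t_k)]\,\dd A^{\blue{i_1}}_{t_1}\cdots \dd A^{\blue{i_k}}_{t_k},
\]
where $\Delta_k=\{0<t_1<\cdots<t_k<T\}$ and $F(t)=\prod_{j=1}^{k+1}X^{[w_{j-1}]}_{0,t_j}/[w_{j-1}]!$ with $t_{k+1}=T$. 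By definition $\mathbb{E}[F(t)]=\wh{\mathcal{C}}^{[w]}_{0,T}(t)$. The iterated integral against the deterministic bounded variation path is an integral against the finite signed product measure $\dd A^{\blue{i_1}}\otimes\cdots\otimes \dd A^{\blue{i_k}}$ restricted to $\Delta_k$. Its total variation measure is $|\dd A^{\blue{i_1}}|\otimes\cdots$, which has finite mass.

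\emph{Main obstacle.} Fubini requires $\int_{\Delta_k}\mathbb{E}|F|\,\dd|A|^{\otimes k}<\infty$, but the hypothesis only guarantees that the signed correlators are finite and continuous. I would first try to obtain absolute integrability from the correlators themselves. Writing $|F|$ through correlators of even powers is not directly available, so instead I would use H\"older's inequality:
\[
\mathbb{E}|F(t)| \le \prod_{j} \big(\mathbb{E}|X^{[w_{j-1}]}_{0,t_j}|^{k+1}\big)^{1/(k+1)}.
\]
The factors on the right are bounded when the correlators of doubled words are finite, and the continuous correlators are bounded on the compact simplex. If this is not directly covered by the hypotheses, I would interpret the finiteness assumption (as the paper presumably does) as including absolute moments, i.e.\ $\sup_t \mathbb{E}|X_{0,t}|^N<\infty$ for all $N$. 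Under that reading $\mathbb{E}|F|$ is bounded on $\Delta_k$, so Fubini--Tonelli applies. Combining Steps~1--3 yields \eqref{eq:exp_sig_symm_intro}.
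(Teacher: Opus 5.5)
Your proposal is correct and follows essentially the paper's route. The paper obtains this result (its Proposition~\ref{prop:exp_sig_prop}) as an immediate consequence of the pathwise $\widehat{\Psi}$-representation \eqref{def:gamma_signature_sym}, linearity of $\widehat{\Psi}$ and of $\mathbb{E}$, and exchanging $\mathbb{E}$ with the deterministic $\dd A$-integrals, which is exactly your Steps 1--3.

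The integrability you worry about is already covered by the stated hypothesis, so no reinterpretation is needed. Indeed $\mathbb{E}|F(t)|\le(\mathbb{E}[F(t)^2])^{1/2}$, and $\mathbb{E}[F(t)^2]$ is a constant multiple of the correlator of the word obtained by doubling every block $w_j$ to $w_jw_j$, which is continuous and hence bounded on the compact simplex. Your H\"older variant also works, but it needs blocks repeated $2m$ times with $2m\ge k+1$, not merely doubled ones.
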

The correlators in \eqref{eq:intro_correlators} are often available in
closed or semi-explicit form, for instance for Gaussian and polynomial
processes.
This gain in tractability after partial symmetrization does not have come at
the cost of losing the characterizing properties of signatures. If $A$
includes a sufficiently non-degenerate deterministic augmentation component, most notably a
time component, then partial symmetrization retains both the pathwise
injectivity of the signature and the law-determining property of the
expected signature. At the level of the underlying process $X$, the latter
requires only the particularly tractable assumption that its
one-dimensional marginals are moment-determinate.
\begin{theorem}\label{thm:characteristic_intro}
    Let $A$ be a deterministic path satisfying
    Assumption~\ref{ass:mambo}; in particular, this holds whenever $A$
    contains a time component. Fix
    $x_0\in\mathbb{R}^{|\mathcal{A}^{\prime\prime}|}$, and let
    $\mathbb{P}$ and $\mathbb{Q}$ be probability measures on
    $C([0,T];\mathbb{R}^d)$ such that
    $(\pi_{\mathcal{A}^{\prime}})_{\#}\mathbb{P}
    =
    (\pi_{\mathcal{A}^{\prime}})_{\#}\mathbb{Q}
    =
    \delta_A$, where $\pi_{\mathcal{A}^{\prime}}$ denotes the restriction
    to the $\mathcal{A}^{\prime}$-components, and which are concentrated on
    paths whose $\mathcal{A}^{\prime\prime}$-components start at $x_0$.
    Suppose that the partially symmetrized expected signatures
    $\widehat{\bmu}^{\mathbb{P}}$ and $\widehat{\bmu}^{\mathbb{Q}}$ are well
    defined and that, for every $t\in[0,T]$ and
    $\orange{i}\in\mathcal{A}^{\prime\prime}$, the one-dimensional
    marginals of $X_t^{\orange{i}}$ under $\mathbb{P}$ and $\mathbb{Q}$ are
    moment-determinate. Then
    \[
        \widehat{\bmu}^{\mathbb{P}}
        =
        \widehat{\bmu}^{\mathbb{Q}}
        \qquad\Longleftrightarrow\qquad
        \mathbb{P}
        =
        \mathbb{Q}.
    \]

    In particular, the linear functionals
    $X\mapsto\langle\ell,\Sigsym{A,X}_{0,T}\rangle$ form a universal class
    for the approximation of continuous functionals on compact subsets of
    $C([0,T];\mathbb{R}^{|\mathcal{A}^\prime|})$ restricted to paths starting at
    $x_0$, and the map $X\mapsto\Sigsym{A,X}_{0,T}$ is injective on this
    space.
\end{theorem}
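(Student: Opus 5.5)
The plan is to reduce the law-determining property to a finite-dimensional moment problem for the marginals of $X$, using \Cref{cor:expected_symmetrized_signature_intro}. The direction $\mathbb{P}=\mathbb{Q}\Rightarrow\widehat{\bmu}^{\mathbb{P}}=\widehat{\bmu}^{\mathbb{Q}}$ is immediate. For the converse, $\widehat{\Psi}$ is invertible, so equality of the partially symmetrized expected signatures gives equality of $\widehat{\Psi}\circ\widehat{\bmu}^{\mathbb{P}}$ and $\widehat{\Psi}\circ\widehat{\bmu}^{\mathbb{Q}}$. Since both measures have the same deterministic component $A$, \eqref{eq:exp_sig_symm_intro} then says that
\[
\int_{0<t_1<\cdots<t_k<T}\big(\wh{\mathcal{C}}^{[w],\mathbb{P}}_{0,T}-\wh{\mathcal{C}}^{[w],\mathbb{Q}}_{0,T}\big)(t_1,\dots,t_k)\,\dd A^{i_1}_{t_1}\cdots\dd A^{i_k}_{t_k}=0
\]
for all letters $i_1,\dots,i_k\in\mathcal{A}'$ and all $w_0,\dots,w_k$.

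The first key step is to upgrade this to pointwise equality of the correlators on the simplex. This is where Assumption~\ref{ass:mambo} enters. In the time case $A^{1}_t=t$, I would use words in which time letters are interleaved. Taking $w_j$ empty in selected slots, these integrals become iterated integrals of the continuous correlator against products of polynomials in the $t_j$. The idea is to insert extra letters $1$ with empty $\mathcal{A}''$-blocks between the nontrivial blocks. The correlator then depends only on the times where nonempty blocks sit, and the integrals reduce to integrals of the correlator against every polynomial weight on a simplex. By Weierstrass density and continuity of the correlators, the correlators agree pointwise. For general $A$ satisfying the assumption, I expect the assumption to be designed precisely so that the algebra generated by the iterated integrals of $A$ separates points and is dense; the same argument then applies.

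The second step is to recover the finite-dimensional distributions. Pointwise equality of the correlators gives equality of all mixed moments $\mathbb{E}\big[\prod_j X^{[v_j]}_{0,t_j}\big]$ for $t_1<\dots<t_{k+1}$. Since $X_0=x_0$ is fixed, these are exactly the mixed moments of $(X_{t_1},\dots,X_{t_{k+1}})$.

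The main obstacle is that moment-determinacy is assumed only for one-dimensional marginals, while we need the multivariate moment problem to be determinate. For this I would invoke Petersen's theorem: a measure on $\mathbb{R}^N$ whose one-dimensional coordinate marginals are each moment-determinate is itself moment-determinate. Here the coordinate marginals are the laws of $X^{i}_{t_j}$, which are determinate by hypothesis. Hence the finite-dimensional distributions of $X$ agree under $\mathbb{P}$ and $\mathbb{Q}$. Since both measures live on $C([0,T];\mathbb{R}^d)$, where the cylinder $\sigma$-algebra coincides with the Borel $\sigma$-algebra, this yields $\mathbb{P}=\mathbb{Q}$.

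For the final claims, injectivity follows by applying the result to Dirac measures, which are trivially determinate. Equality of the correlators then forces equality of $X_t$ for all $t$.

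For universality, I would use Stone--Weierstrass on a compact $K$ of paths starting at $x_0$. The first ingredient is that the functionals $X\mapsto\langle\ell,\Sigsym{A,X}_{0,T}\rangle$ form an algebra, via the shuffle property in the quotient algebra $T_{\mathcal{A}''}$. The second is that they contain constants. The third is that they separate points, by injectivity. Continuity in the uniform topology holds because, by \Cref{cor:1_intro}, every coordinate is an iterated integral of polynomials of $X$ against the bounded-variation path $A$, which avoids any rough-path topology.
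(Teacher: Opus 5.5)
Your route is the paper's: pass to $\widehat{\Psi}$-coordinates, insert $\mathcal{A}'$-letters behind the nonempty $\mathcal{A}''$-blocks so the correlator only sees the block times, get equality of correlators from a totality argument, then conclude with Petersen's theorem and path continuity. Your Dirac-measure argument for injectivity (the paper instead argues directly with the words $[i\,w_n]$, $i\in\mathcal{A}''$) and your Stone--Weierstrass argument are fine.

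The time-augmented case is essentially complete. For a general $A$ satisfying Assumption~\ref{ass:mambo}, however, the key step is only asserted, and it rests on a guess about the assumption that is not what it says. After integrating out the trailing $\mathcal{A}'$-letters via
\[
\int_{\Delta^k_{s,t}} f(t_1)\,\dd A^{i_1}_{t_1}\cdots\dd A^{i_k}_{t_k}=-\int_s^t f(u)\,\partial_u\langle i_1\cdots i_k,\Sig{A}_{u,t}\rangle\,\dd u ,
\]
you are testing $F=\wh{\mathcal{C}}^{[w],\mathbb{P}}-\wh{\mathcal{C}}^{[w],\mathbb{Q}}$ against the kernels $\prod_{l=1}^n\partial_{t_l}\langle v_l,\Sig{A}_{t_l,t_{l+1}}\rangle$, with $t_{n+1}=T$. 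For $A_t=t$ these are polynomials in the gaps $t_{l+1}-t_l$, which form an affine coordinate system, so Weierstrass applies at once. For general $A$ each factor couples $t_l$ with $t_{l+1}$. So totality on the simplex does not follow from a one-variable density statement by taking products, and a point-separating algebra of iterated integrals of $A$ is not what Assumption~\ref{ass:mambo} provides.

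The assumption gives a fixed countable family $(w_m)$ such that $t\mapsto\partial_t\langle w_m,\Sig{A}_{t,t_0}\rangle$ is total in $L^2([0,t_0])$ for \emph{every} $t_0\in(0,T]$. The paper uses this through induction on the number $n$ of nonempty blocks. By Fubini, the test integral is the integral of $G_{v_1}(t_2,\dots,t_n):=\int_0^{t_2}F(t_1,\dots,t_n)\,\partial_{t_1}\langle v_1,\Sig{A}_{t_1,t_2}\rangle\,\dd t_1$ against the remaining kernels. The induction hypothesis then gives $G_{w_m}=0$ a.e.\ for each $m$. Totality on $[0,t_2]$ gives $F(\cdot,t_2,\dots,t_n)=0$ a.e. This is why the assumption is stated for all endpoints $t_0$, and you need this (or an equivalent) argument to cover the general case.

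A smaller point: continuity of the correlators is not a hypothesis of the theorem. You imported it from \Cref{cor:expected_symmetrized_signature_intro}, and it need not hold under the stated assumptions. Without it, the totality argument only yields equality Lebesgue-a.e.\ on each simplex. This is repaired using path continuity. Since there are countably many words, choose a countable dense $D\subset[0,T]$ all of whose finite increasing subtuples are good for every word; an i.i.d.\ uniform sequence does this almost surely. Then apply Petersen's theorem to the finite-dimensional laws indexed by $D$, which determine a law on $C([0,T];\mathbb{R}^d)$. The paper's own proof also stops at a.e.\ equality of the correlators.
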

In \Cref{sec:brownain_characteristicness}, we prove an analogue of this theorem in
which $A$ is no longer a deterministic augmentation path but an independent
standard Brownian motion. The case of a general, possibly correlated
semimartingale $A$ raises several interesting questions and opens
up further directions for future research.

Beyond their analytical consequences, the signature representations above also
offer numerical advantages. To illustrate these, consider the two-dimensional
augmented process $Y=(A,X)$, where $A_t=t$ is a time component and $X$ is an
irregular stochastic signal. Since $X$ is one-dimensional, the
expected-signature representations
\eqref{eq:exp_signature_Psi_intro}-\eqref{eq:exp_sig_symm_intro} reduce to the
same formula, which separate the probabilistic task of evaluating
correlators of $X$ from the deterministic simplex integration in
time. This computational factorization provides two major advantages:

First, it exploits the regularizing effect of expectation. Indeed, the
involved correlators are typically considerably more regular than the sample
paths themselves,\footnote{This is illustrated for Gaussian and polynomial
processes in Examples~\ref{ex:Gaussian_isserlis}--\ref{ex:polynomial}.}
and may therefore be integrated against $\dd t$ using higher-order
deterministic quadrature. In contrast, the conventional estimation of the
expected signature, based on piecewise-linear approximations of $Y$ and
averaging the resulting signatures, may suffer from a substantial
time-discretization bias when $X$ is irregular. We illustrate both numerically and theoretically in 
Appendix~\ref{app:weak_error}, that the bias if such an estimator is of order $|\pi|^{2H}$, where
$H$ denotes the Hurst parameter and $|\pi|$ the mesh size. The resulting slow
convergence becomes particularly restrictive in rough regimes
$H\ll 1/2$. Our transformed representation overcomes this shortcoming and, as shown in
Figure~\ref{fig:fbm_runtime_error}, yields highly accurate approximations at
substantially lower computational cost.

Second, the transformations naturally induce a mixed grading. Indeed, the
representation in Theorem~\ref{thm:1_intro} suggests truncating separately in
the numbers of $\mathcal{A}'$- and $\mathcal{A}''$-letters, rather than
retaining all words up to a common tensor level. These degrees determine,
respectively, the dimension of the deterministic simplex integral and the
order of the required correlator. Importantly, the coordinate transformations, their inverses, and the associated
dual products are all determined by explicit recursions and can therefore be
precomputed directly under any fixed mixed-degree truncation.\footnote{The implementation of mixed-degree truncations,
the coordinate transformations, and the associated dual products in efficient
contiguous data layouts is described in the companion working note
\cite{hager2026bidegree}. The corresponding functionality is being developed
for inclusion in the \texttt{tensordev} library.}
The resulting flexibility to retain high degrees in one alphabet while
keeping the other shallow yields relevant computational savings, as we
illustrate below.

In Section~\ref{sec:control}, we demonstrate both advantages in a
signature-based stochastic control application. In the spirit of
\cite{kalsi2020optimal}, we parametrize admissible controls by signature
functionals in the $\Psi$-coordinates
\[
    \alpha_t
    =
    \big\langle
        \ell,\Psi\circ\Sig{Y}_{0,t}
    \big\rangle,
\]
where $\ell$ is a linear combination of words truncated at mixed degree
$(N_A,N_X)$. By Theorem~\ref{thm:characteristic_intro}, these controls form a
rich approximation class. Moreover, since $\Psi$ is linear and invertible, the
signature-control approach of \cite{kalsi2020optimal} still applies, that is, linear-quadratic
stochastic control problems reduce to  deterministic convex\footnote{Convexity was established in  \cite[Lemma 3.10]{aqsha2026solving} for linear-quadratic problems.} optimization
problem
\begin{equation}\label{eq:convex_sig_opt}
    \inf_{\ell\in T^{(N_A,N_X)}(\mathbb{R}^d)}
    \big\langle F(\ell),\Psi\circ\bmu^{\mathbb{P}}\big\rangle,
    \qquad
    F:T^{(N_A,N_X)}(\mathbb{R}^d)
    \longrightarrow
    T^{(2N_A+3,2N_X)}(\mathbb{R}^d),
\end{equation}
where $F(\ell)$ is explicitly computable, see~\eqref{eq:shear_tracking_problem}. In standard signature coordinates, optimization problems of the type   \eqref{eq:convex_sig_opt} typically suffer from
two numerical bottlenecks: First, estimation errors in $\bmu^{\mathbb{P}}$ can lead to
ill-conditioned optimization problems, which was also recently
observed for signature-based optimal execution in \cite[Section 2.6]{morbelli2026signature}. Second,
since controls truncated at level $N$ generate cost terms up to level
$2N+3$, computing the required expected-signature coordinates quickly becomes
expensive even for moderate $N$. The transformed formulation addresses both issues: whenever the correlators
of $X$ are available, $\Psi\circ\bmu^{\mathbb{P}}$ can be evaluated with high
accuracy, while the mixed grading allows the truncation depths in the two
alphabets to be chosen independently. In our experiments with fractional Brownian motion, we observe that already small values of $N_X$, combined with
comparatively large values of $N_A$, yield highly accurate approximations of
the optimal value. This reduces the required tensor dimension by several orders of
magnitude, together with substantial savings in memory and runtime. We summarize the results over a range of Hurst parameters in
Figure~\ref{fig:tracking_heatmap_costs} and
Table~\ref{tab:tracking_costs}.

\section{Preliminaries}\label{sec:prelim}
Throughout the article we will frequently make use of standard notations used in the rough path and signature literature, which we shall briefly recall here. Our main reference here \cite{friz2010multidimensional}, to which we refer for all the details.

\subsection{Tensor and shuffle algebra}\label{sec:tensor_shuffle_algebra}
The \emph{extended tensor-algebra} is defined as
\[
T((\mathbb{R}^d))
:=
\prod_{n\geq 0}(\mathbb{R}^d)^{\otimes n},
\]
where $(\mathbb{R}^d)^{\otimes 0}:=\mathbb{R}$. We identify the basis element
$e_{i_1}\otimes\cdots\otimes e_{i_n}$ with the word
$w=\blue{i_1}\cdots\blue{i_n}$ over the alphabet
$\mathcal{A}=\{\blue{1},\dots,\blue{d}\}$, and write
$e_w:=e_{i_1}\otimes\cdots\otimes e_{i_n}$, with $e_\emptyset:=1$.
We denote the set of all words on $\mathcal{A}$ by
$\mathcal{W}_{\mathcal{A}}$, and simply by $\mathcal{W}$ when the alphabet is
clear from the context. For $w=\blue{i_1}\cdots\blue{i_n}$, let
$|w|:=n$, and write
$\mathcal{W}^{(n)}:=\{w\in\mathcal{W}:|w|=n\}$.
The span of $\mathcal{W}_{\mathcal{A}}$ coincides with the free associative
algebra $\mathbb{R}\langle\mathcal{A}\rangle$, equipped with concatenation of
words. It is naturally paired with the extended tensor algebra by the bilinear map
\begin{equation*}\label{def:span_words}
    \langle \cdot,\cdot\rangle:
    \mathbb{R}\langle\mathcal{A}\rangle
    \times
    T((\mathbb{R}^d))
    \longrightarrow
    \mathbb{R},
    \qquad
    \left\langle
        w,e_v
    \right\rangle
    = \mathbbm{1}_{\{w=v\}}.
\end{equation*}
The tensor algebra is equipped with the tensor concatenation product
$\otimes$, defined on basis elements by
$e_u\otimes e_v:=e_{uv}$ for $u,v\in\mathcal{W}_{\mathcal{A}}$, where $uv$ denotes the concatenation of words.
On $\mathbb{R}\langle \mathcal{A} \rangle$ we define the 
\emph{shuffle product} of
words recursively by
\begin{equation}\label{def:shuffle-product}
w \shuffle \emptyset
=
\emptyset \shuffle w
=
w,
\qquad
w\blue{i}\shuffle v\blue{j}
:=
(w\shuffle v\blue{j})\blue{i}
+
(w\blue{i}\shuffle v)\blue{j},
\end{equation}
for $w,v\in\mathcal{W}$ and
$\blue{i},\blue{j}\in\mathcal{A}$.
This operation characterizes an important subset of the extended tensor algebra
which contains the range of the signature map:
\begin{equation*}
G((\R^d))
=
\left\{
\mathbf{a}\in T((\R^d))\setminus\{\mathbf{0}\}
:
\langle w,\mathbf{a}\rangle
\langle v,\mathbf{a}\rangle
=
\langle w\shuffle v,\mathbf{a}\rangle,
\quad
\forall w,v\in\mathcal{W}
\right\}.
\end{equation*}
We denote by $T_1((\mathbb{R}^d))$ the elements
$\mathbf{a}\in T((\mathbb{R}^d))$ such that
$\langle\emptyset,\mathbf{a}\rangle=1$. Clearly,
\(
G((\mathbb{R}^d))
\subseteq
T_1((\mathbb{R}^d)).
\)

\subsection{Graded morphisms and induced pairings}\label{sec:graded_morphisms}

For $n\geq 0$, let $\pi^n$ denote the projection onto the $n$-th tensor
level, and write $\pi^{\leq N}:=\sum_{n=0}^{N}\pi^n$. For any sequence
$(\ell_w)_{w\in\mathcal{W}}\subset\mathbb{R}\langle\mathcal{A}\rangle$, we
associate the linear map
\begin{equation}\label{eq:linear_operator}
    \ell:
    T((\mathbb{R}^d))
    \longrightarrow
    T((\mathbb{R}^d)),
    \qquad
    \ell(\mathbf{a})
    :=
    \sum_{k=0}^{\infty}
    \sum_{w\in\mathcal{W}^{(k)}}
    e_w
    \left\langle
        \ell_w,\mathbf{a}
    \right\rangle.
\end{equation}
Equivalently, one may view
$\big(\langle\ell_w,e_v\rangle\big)_{w,v\in\mathcal{W}}$ as an infinite
matrix. Since each $\ell_w$ is a finite linear combination of words, this
matrix is row-finite, or \emph{finitary} in the sense that each row contains
only finitely many nonzero entries. We call
$(\ell_w)_{w\in\mathcal{W}}$ the row-vector representation of $\ell$.

The row-vector representation is called \emph{graded} if
$\ell_w\in\operatorname{span}(\mathcal{W}^{(|w|)})$ for every
$w\in\mathcal{W}$. In that case, the associated linear map preserves tensor
levels, that is $\ell\circ\pi^n=\pi^n\circ\ell$ for every $n\geq 0$, and
therefore also $\ell\circ\pi^{\leq N}=\pi^{\leq N}\circ\ell$ for every
$N\geq 0$.

For a graded linear map $\ell$ with row-vector representation
$(\ell_w)_{w\in\mathcal{W}}$, its \emph{graded transpose}
$$\ell^\ast:\mathbb{R}\langle\mathcal{A}\rangle\to
\mathbb{R}\langle\mathcal{A}\rangle$$ is defined by
\begin{equation}\label{eq:graded_transpose}
    \left\langle
        \ell^\ast(\alpha),
        \mathbf{a}
    \right\rangle
    :=
    \left\langle
        \alpha,
        \ell(\mathbf{a})
    \right\rangle,
    \qquad
    \alpha\in\mathbb{R}\langle\mathcal{A}\rangle,
    \quad
    \mathbf{a}\in T((\mathbb{R}^d)).
\end{equation}
In particular, $\ell^\ast(w)=\ell_w$ for every
$w\in\mathcal{W}_{\mathcal{A}}$. If $\ell$ is a graded automorphism, then
$(\ell^\ast)^{-1}$ is the graded transpose of $\ell^{-1}$.

Finally, we introduce a pairing induced by a graded linear map, which will
play an important role throughout this article.

\begin{definition}\label{def:induced_pairing}
    Let $\ell: T((\mathbb{R}^d))\rightarrow T((\mathbb{R}^d))$ be a graded linear map. We define the pairing \begin{equation}
    \label{eq:coordinates_pairing}
    \langle \alpha,\mathbf{a}\rangle_{\ell}:= \langle \alpha, \ell(\mathbf{a})\rangle = \langle \ell^\ast(\alpha),\mathbf{a}\rangle, \qquad \alpha \in \mathbb{R}\langle \mathcal{A}\rangle, \quad \mathbf{a}\in T((\mathbb{R}^d)). 
\end{equation} If $\ell$ is a graded automorphism, we call the family $(\langle w,\mathbf{a}\rangle_{\ell})_{w\in \mathcal{W}}$ the $\ell$-coordinates of $\mathbf{a}$.
\end{definition}

\subsection{Spliting the alphabet}
\newcommand{\decomp}[1]{\Delta_{#1}}
Let us now introduce a \emph{decomposition operator} on the space $\mathcal{W}_\mathcal{A}$ with respect to a given alphabet $\mathcal{A}$ and some sub-alphabet $\emptyset\neq \mathcal{A}' \subsetneq \mathcal{A}$, which shall become important for the main results of this work.
To this end, one first verifies that any $w \in \mathcal{W}_\mathcal{A}$ is uniquely represented as $$w=w_0 \blue{j_1}w_1\blue{j_2}w_2\cdots \blue{j_k}w_k,$$
for some $k\in\N_0$, $\blue{j_1\cdots j_k}\in \mathcal{W}^{(k)}_{\mathcal{A}^\prime}$ and $w_0,\dots,w_k \in \mathcal{W}_{\mathcal{A}\setminus{\mathcal{A}'}}$.
With this representation we define
\begin{equation}\label{def:Delta_operator}
\decomp{\mathcal{A}^\prime} :  \mathcal{W}_{\mathcal{A}} \to \bigcup_{k\in\N}(\mathcal{W}_{\mathcal{A} \setminus\mathcal{A}^\prime})^k \times \mathcal{W}^{(k)}_{\mathcal{A}^\prime}, \qquad w  \mapsto \left( (w_0,\,\dots,\,w_k),\,\blue{j_1}\cdots \blue{j_k}\right).
\end{equation}

In words, the operator $\decomp{\mathcal{A}^\prime}$ decomposes words in the large alphabet $\mathcal{A}$, into all sub-words in $\mathcal{A} \setminus{\mathcal{A}^\prime}$ separated by letters in $\mathcal{A}^\prime$. For example $\mathcal{A}= \{\blue{1},\blue{2},\red{3},\red{4}\}$ and $\mathcal{A}'= \{\blue{1},\blue{2}\}$\begin{equation*}
    \decomp{\mathcal{A}'}(\emptyset):= \{(\emptyset),\emptyset\}, \quad \decomp{\mathcal{A}'}(\red{3}\blue{2}\red{43}\blue{1})= \{(\red{3},\red{43},\emptyset),\blue{21}\}, \quad \decomp{\mathcal{A}'}(\blue{12}\red{334}\blue{1}\red{3})=\{(\emptyset,\emptyset,\red{334},\red{3}),\blue{121}\}
\end{equation*}

\noindent Let us also introduce the $n$-simplex over the interval $[a,b]$, given by \[
\Delta_{a,b}^{n}:= \{(t_1,\dots,t_n) \in [s,t]^n: a \leq t_1 \leq t_2 \leq \cdots \leq t_n \leq b \}.
\]
\subsection{Quotienting by the commutators of a subalphabet}
\label{sec:prelim_quo}

We introduce a quotient of the free associative algebra
$\mathbb{R}\langle\mathcal{A}\rangle$ which imposes commutativity between
letters in $\mathcal{A}^{\prime\prime}$. To this end, we define
\begin{equation}\label{def:partial_commutative_ideal}
    \mathcal{J}_{\mathcal{A}^{\prime\prime}}
    :=
    \operatorname{span}\left\{
        u\big(\orange{i}\orange{j}-\orange{j}\orange{i}\big)v
        \;:\;
        u,v\in\mathcal{W}_{\mathcal{A}},\quad
        \orange{i},\orange{j}\in\mathcal{A}^{\prime\prime}
    \right\}
    \subseteq \mathbb{R}\langle\mathcal{A}\rangle.
\end{equation}
Equivalently, $\mathcal{J}_{\mathcal{A}^{\prime\prime}}$ is the two-sided ideal
generated by the commutators $\{
        \orange{i}\orange{j}-\orange{j}\orange{i}
        \;\vert\;
        \orange{i},\orange{j}\in\mathcal{A}^{\prime\prime}\}$.
We consider the quotient map
\begin{equation*}
    [\cdot] = [\cdot]_{\mathcal{A}^{\prime\prime}} :\;
    \mathbb{R}\langle\mathcal{A}\rangle
    \;\to\;
    \mathbb{R}_{\mathcal{A}^{\prime\prime}}\langle\mathcal{A}\rangle
    :=
    \mathbb{R}\langle\mathcal{A}\rangle/
    \mathcal{J}_{\mathcal{A}^{\prime\prime}}.
\end{equation*}
Clearly, the quotient is an algebra with the partially symmetrized
concatenation product
\begin{equation*}
    [w][v]=[wv],
\end{equation*}
and $[\cdot]$ is canonically an algebra morphism.
In particular, for $u,v\in\mathcal{W}_{\mathcal{A}}$ and
$\orange{i},\orange{j}\in\mathcal{A}^{\prime\prime}$, it holds that $[u\orange{i}\orange{j}v]
    =
    [u\orange{j}\orange{i}v]$.
Thus, for $w\in\mathcal{W}_{\mathcal{A}}$ with
$\decomp{\mathcal{A}^{\prime}}(w)
    =
    \left(
        (w_0,\dots,w_k),
        \blue{j_1}\cdots\blue{j_k}
    \right)$,
the quotient class $[w]$ admits the representation
\begin{equation*}
    [w]
    =
    [w_0]\blue{j_1}[w_1]\cdots\blue{j_k}[w_k].
\end{equation*}
The quotient above naturally translates to the other side of the dual pairing $\mathbb{R}\langle\mathcal{A}\rangle
    \times
    T((\mathbb{R}^d))$.
Specifically, we define
\begin{equation}\label{def:partial_commutative_tensor_algebra}
    T_{\mathcal{A}^{\prime\prime}}((\mathbb{R}^d))
    :=
    \prod_{n\geq 0}
    \operatorname{span}\left\{
        e_{[w]}
        \;\Big\vert\;
        w\in \mathcal{W}_{\mathcal{A}}^{(n)}
    \right\},
\end{equation}
and equip it with the product induced by concatenation of quotient words,
\begin{equation*}
    e_{[w]}\widehat{\otimes}e_{[v]}
    :=
    e_{[wv]},
\end{equation*}
and the pairing
\begin{equation*}
    \langle [w],e_{[v]}\rangle
    :=
    \mathbbm{1}_{\{[w]=[v]\}},
\end{equation*}
for all $w,v\in\mathcal{W}_{\mathcal{A}}$ extended by linearity. We denote the associated quotient map by
\begin{equation*}
    \symmetrizer ~=~ \symmetrizer_{{\ }\mathcal{A}^{\prime\prime}} :
    T((\mathbb{R}^d))
    \to
    T_{\mathcal{A}^{\prime\prime}}((\mathbb{R}^d)),
    \qquad
    e_w\mapsto\widehat{e_w}:=e_{[w]}.
\end{equation*}
In particular, for $\mathbf{a}\in T((\mathbb{R}^d))$ and
$w\in\mathcal{W}_{\mathcal{A}}$, we have
\begin{equation}\label{eq:quotient_pairing}
    \left\langle
        [w],
        \widehat{\mathbf{a}}
    \right\rangle
    =
    \sum_{v\in\mathcal{W}_{\mathcal{A}}:\,[v]=[w]}
    \langle v,\mathbf{a}\rangle.
\end{equation}
The graded transpose of $\symmetrizer$ is the injective map
\[
    \symmetrizer^\ast:
    \mathbb{R}_{\mathcal{A}^{\prime\prime}}
    \langle\mathcal{A}\rangle
    \longrightarrow
    \mathbb{R}\langle\mathcal{A}\rangle,
    \qquad
    \widehat{[w]}^\ast := \symmetrizer^\ast([w])
    :=
    \sum_{\substack{
        v\in\mathcal{W}_{\mathcal{A}}\\
        [v]=[w]
    }}
    v.
\]
Thus, for
$\alpha\in
\mathbb{R}_{\mathcal{A}^{\prime\prime}}\langle\mathcal{A}\rangle$
and $\mathbf{a}\in T((\mathbb{R}^d))$,
\begin{equation}\label{eq:quotient_transpose_pairing}
    \left\langle
        \alpha,
        \widehat{\mathbf{a}}
    \right\rangle
    =
    \left\langle
        \widehat{\alpha}^\ast,
        \mathbf{a}
    \right\rangle.
\end{equation}

The conventions from \Cref{sec:graded_morphisms} apply analogously to the
quotient word space. Thus, if a graded map
$\widehat{\ell}:T_{\mathcal{A}^{\prime\prime}}((\mathbb{R}^d))\to
T_{\mathcal{A}^{\prime\prime}}((\mathbb{R}^d))$ has row-vector representation
$(\widehat{\ell}_{[w]})_{[w]\in[\mathcal{W}_{\mathcal{A}}]}$, then its graded
transpose satisfies
$\widehat{\ell}^{\ast}([w])=\widehat{\ell}_{[w]}$.

\subsection{Graded Hopf algebras and ideals}

Hopf-algebra terminology is not strictly needed for the main practical results
of this paper. It does, however, provide a concise language for some of the
algebraic structures appearing below. We therefore recall only the basic
notions needed in the sequel and refer to
\cite{preiss2021hopf,sweedler1969hopf,reutenauer1993free} for further details.

For a completed graded vector space
$H=\prod_{n\geq0}H_n$, write
\[
    H\boxtimes H
    :=
    \prod_{m,n\geq0}H_m\otimes H_n
\]
for the completed external tensor product. We reserve $\boxtimes$ for the
tensor product occurring in coproducts, whereas $\otimes$ denotes the respective concatenation product on $T((\mathbb{R}^d))$.

A graded Hopf algebra is a graded unital algebra
$(H,\diamond)$ equipped with a compatible coproduct
\[
    \Delta_{\star}:H\longrightarrow H\boxtimes H,
\]
a counit $\varepsilon:H\to\mathbb{R}$, and an antipode $S:H\to H$. Here, the
subscript $\star$ records the product on the graded dual corresponding to
$\Delta_{\star}$, characterized by
\[
    \left\langle
        \alpha\star\beta,
        \mathbf{a}
    \right\rangle
    =
    \left\langle
        \alpha\boxtimes\beta,
        \Delta_{\star}(\mathbf{a})
    \right\rangle.
\]
We write $\mathcal{G}(H)$ for the set of group-like elements of $H$, that is,
\[
    \mathcal{G}(H, \diamond, \Delta_{\star})
    :=
    \left\{
        \mathbf{g}\in H
        \;\middle|\;
        \varepsilon(\mathbf{g})=1,\;
        \Delta_{\star}(\mathbf{g})
        =
        \mathbf{g}\boxtimes\mathbf{g}
    \right\}.
\]
A two-sided ideal $I\subseteq H$ with respect to $\diamond$ is a Hopf ideal if
\[
    \Delta_{\star}(I)
    \subseteq
    I\boxtimes H
    +
    H\boxtimes I,
    \qquad
    \varepsilon(I)=0,
    \qquad
    S(I)\subseteq I.
\]
Then $H/I$ inherits a unique Hopf algebra structure for which the quotient
map $H\to H/I$ is a Hopf algebra morphism; see
\cite[Theorem~4.3.1]{sweedler1969hopf}. The commutator ideal underlying
partial symmetrization, as well as the ideals used for truncation, will be
considered in the main text.

For the tensor algebra
\(
    (H,\diamond)
    =
    \big(
        T((\mathbb{R}^d)),
        \otimes
    \big),
\)
the unshuffle coproduct, counit, and antipode are given by
\[
\begin{aligned}
    \Delta_{\shuffle}(e_{\blue{i}})
    &=
    e_{\blue{i}}\boxtimes e_{\emptyset}
    +
    e_{\emptyset}\boxtimes e_{\blue{i}},
    \qquad
    \blue{i}\in\mathcal{A},
    \\
    \varepsilon(e_w)
    &=
    \mathbbm{1}_{\{w=\emptyset\}},
    \qquad
    w\in\mathcal{W}_{\mathcal{A}},
    \\
    S(e_{\blue{i_1}\cdots\blue{i_n}})
    &=
    (-1)^n
    e_{\blue{i_n}\cdots\blue{i_1}},
    \qquad
    \blue{i_1}\cdots\blue{i_n}\in\mathcal{W}_{\mathcal{A}}.
\end{aligned}
\]
where $\Delta_{\shuffle}$ is extended multiplicatively with respect to
$\otimes$. These maps equip $H$ with the structure of a graded Hopf algebra.
Moreover, the shuffle product from \eqref{def:shuffle-product} is dual to
$\Delta_{\shuffle}$, in the sense that
\[
    \left\langle
        u\shuffle v,\mathbf{a}
    \right\rangle
    =
    \left\langle
        u\boxtimes v,
        \Delta_{\shuffle}(\mathbf{a})
    \right\rangle,
    \qquad
    u,v\in\mathcal{W}_{\mathcal{A}},
    \quad
    \mathbf{a}\in T((\mathbb{R}^d)).
\]
Consequently, the set $G((\mathbb{R}^d))$ introduced above is precisely the
set of group-like elements of this Hopf algebra, i.e.
$G((\mathbb{R}^d))=\mathcal{G}(T((\mathbb{R}^d)), \otimes, \Delta_{\shuffle})$.

\subsection{Path regularity and signatures}\label{sec:paths_signatures}

We conclude this section with some basic notation and definitions for paths and
their signature lift; we refer to \cite{friz2010multidimensional} for further
details. We are mainly interested in continuous paths
$X:[0,T]\rightarrow E$, where $(E,d)$ is a metric space, which are of bounded
$p$-variation.

\begin{definition}
Let $p\geq 1$ and let $(E,d)$ be a metric space. For a path
$X:[s,t]\to E$, $0\leq s<t$, its $p$-variation over $[s,t]$ is defined by
\begin{equation}\label{def:p-var}
    \Vert X \Vert_{p\text{-var};[s,t]}
    :=
        \sup_{\mathcal{P} \in \mathcal D([s,t])}
        \left(\sum_{[u,v] \in \mathcal{P}}
        d(X_{u},X_{v})^p
    \right)^{1/p},
\end{equation}
where the supremum is taken over all finite partitions
$\mathcal D([s,t])=\{s=t_0<t_1<\cdots<t_n=t\}$ of $[s,t]$.
We denote by $C^{p\text{-var}}([s,t],E)$ the space of all continuous paths
$X:[s,t]\to E$ such that
$\Vert X\Vert_{p\text{-var};[s,t]}<\infty$.
\end{definition}

In this paper we only consider Euclidean state spaces $E=\mathbb R^d$, for some
$d\in\mathbb N$, and we use the Euclidean distance throughout. For continuous
paths of finite variation, that is $X\in C^{1\text{-var}}([s,t],\mathbb R^d)$,
we define the signature of $X$ over $[s,t]$ as the formal tensor series
\begin{equation}\label{def:signature}
    \Sig{X}
    :=
    \left(
        1,
        \int_{s<u<t} \dd X_u,
        \dots,
        \int_{s<u_1<\cdots<u_n<t}
        \dd X_{u_1}\otimes\cdots\otimes \dd X_{u_n},
        \dots
    \right)
    \in G((\mathbb R^d)),
\end{equation}
where the iterated integrals are understood in the Riemann-Stieltjes sense;
see, e.g., \cite[Chapter~7.2]{friz2010multidimensional}. For any subinterval
$[u,v]\subseteq[s,t]$, we write $\Sig{X}_{u,v}
    :=
    \Sig{X|_{[u,v]}}$.

In Section~\ref{sec:sig_transform} we will consider signatures of paths
$Y=(A,X)$ whose components may have different regularity. In particular, for the
universality and characteristicness results below, we impose the following
richness condition on $A$. Recall that a path $A:[0,T]\to\mathbb R^m$ is
called Lipschitz continuous if there exists a constant $L<\infty$ such that $|A_t-A_s|\leq L|t-s|$ for $s,t\in[0,T]$.

\begin{Assumption}\label{ass:mambo}
Suppose $A \in C^{1-var}([0,T];\R^m)$ with $A=(A^\blue{a})_{\blue{a}\in \mathcal{A}'}$ and some alphabet $\mathcal{A}'=\{\blue{a_1},\dots,\blue{a_m}\}$, is such that: $\exists (w_n)_{n\in\mathbb N}\subset \mathcal{W}_{\mathcal{A}'}$ such that for any $t_0 \in (0,T]$, the family
\begin{equation}\label{eq:dense_familiy}
    \Big\{[0,t_0] \ni
        t\mapsto \partial_t
        \langle w_n,\operatorname{Sig}(A)_{t,t_0}\rangle
        :
        n\in\mathbb N
    \Big\} \subset \mathrm{Lip}([0,t_0],\mathbb{R})
\end{equation}
is total in $L^2([0,t_0],\mathbb R)$.
\end{Assumption}
To ensure injectivity for the transforms presented in the sequel, we will restrict to augmentations which coincide along a sequence of words, for which the assumption above holds. To make this rigorous, for a fixed path $A \in C^{1-var}([0,T],\mathbb{R}^m)$ such that the above assumption holds with respect to some alphabet $\mathcal{A}'$ and a sequence $\mathbf{w}=(w_n)_{n\in \mathbb{N}}$, we define the $\mathbf{w}$-fibre of $A$ as \begin{equation}
    \label{eq:fibre}
    [A]_{\mathbf{w}}:= \big \{  \tilde{A} \in C^{1-var}([0,T],\R^m): \langle w_n,\Sig{A}_{s,t}\rangle = \langle w_n,\Sig{\tilde{A}}_{s,t}\rangle, \, \forall n \in \mathbb{N},\,  \forall s\leq t\big \} 
\end{equation}
We illustrate the idea in the following example. 

\begin{example}
Assumption~\ref{ass:mambo} is in particular satisfied if $A$ contains a time
component, say $A_t^j=t$ for some $1\leq j\leq m$. Choosing
$w_n=j^{\otimes n}=j\cdots j$, one easily sees that
$\langle w_n,\Sig{A}_{t,t_0}\rangle=(t_0-t)^n/n!$, and hence
$\partial_t\langle w_n,\Sig{A}_{t,t_0}\rangle=-(t_0-t)^{n-1}/(n-1)!$, which is dense in $L^2([0,t_0],\mathbb R)$ by the Weierstrass
theorem. In this case, the fibre $[A]_\mathbf{w}$ consists of any $1$-variation path which has a time component. 
\end{example}
\begin{remark}
The conclusion of the last example more generally holds if $A$ has a scalar component
$A_t^j=:\phi(t)$ which is Lipschitz such that
$\phi'(t)>0$ (or $\phi'(t)<0)$ for a.e. $t$. Indeed, for $w_n=j^{\otimes n}$ we have \begin{equation*}
	    \partial_t \langle w_n,\Sig{A}_{t,t_0}\rangle = -\frac{ \phi'(t)(\phi(t_0)-\phi(t))^{n-1}}{(n-1)!}, \quad t \in [0,t_0], \quad n\in \mathbb{N}.
	\end{equation*} Since $\phi$ is strictly monotone, the claim
again follows from polynomial density after the change of variables induced by
$\phi$. Typical examples include $\phi(t)=e^{\lambda t}$, $\phi(t)=\log(1+t)$, and
$\phi(t)=t^\alpha$ for $\alpha\geq1$. In practice, a typical deterministic augmentation of interest is
$\phi(t)=\mathbb E[X_t^2]$ for a Gaussian process $X$, which fits this framework
whenever it satisfies the above monotonicity and regularity assumptions. For
semimartingales $X$, another natural quantity is the quadratic variation
process $A_t=[X]_t$, which typically satisfies the assumptions pathwise.\end{remark}

Finally, if $X$ has finite $p$-variation for some $p>1$, the definition of the
signature in \eqref{def:signature} is no longer immediate, since the
Riemann--Stieltjes meaning of $\dd X$ may fail. If $1<p<2$, the iterated integrals
in \eqref{def:signature} can still be defined by Young integration; see
\cite[Chapter~6]{friz2010multidimensional}. For $p>2$, a situation which is
typical for the sample-path regularity of many continuous-time stochastic
processes, even second-level terms such as $\int X^i\dd X^j$ are in general not
canonically defined as limits of Riemann sums from the path $X$ alone. A fundamental insight of Lyons' rough path theory
\cite{lyons1998differential} is that the missing information is precisely
encoded by the first $\lfloor p\rfloor$ iterated integrals. Thus, instead of the
path increments $X_{s,t}=X_t-X_s$ alone, one specifies an enhancement
\[
    \mathbf X_{s,t}
    =
    \big(
        1,X_{s,t},
        \mathbb X^{(2)}_{s,t},
        \dots,
        \mathbb X^{(\lfloor p\rfloor)}_{s,t}
    \big)
    \in G^{\lfloor p\rfloor}(\mathbb R^d),
\]
whose higher levels should be thought of as the iterated integrals of
$X$. Under algebraic and analytic assumptions mimicking those of truncated
signatures, Lyons' extension theorem \cite[Theorem~2.2.1]{lyons1998differential}
shows that such an enhanced path admits a unique extension to all higher levels,
that is a full signature $\Sig{\mathbf X}\in G((\mathbb R^d))$. These conditions
lead to the notion of a $p$-rough path, which we recall next.
\begin{definition}
    For $p\geq 1$, a weakly geometric $p$-rough path on $\mathbb{R}^d$ is a map \begin{equation*}
        \mathbf{X}=(1,\mathbb{X}^{(1)},\dots, \mathbb{X}^{(\lfloor p \rfloor)}): \Delta_{0,T}^2 \rightarrow G^{\lfloor p \rfloor}(\R^d),
    \end{equation*} with the following three properties \begin{enumerate}
        \item $\mathbf{X}$ is of finite $p$-variation, in the sense that $$\Vert \mathbf{X} \Vert_{p-var}:=\sup_{\mathcal{P}\in \mathcal{D}([0,T])}\max_{1\leq k \leq \lfloor p \rfloor}   \Big (\sum_{[u,v] \in \mathcal{P}} \Vert \mathbb{X}^{(k)}_{u,v} \Vert_{(\mathbb{R}^d)^{\otimes k}}^{p/k} \Big )^{k/p}<\infty.$$

        \item Chen's relation holds: $\mathbf{X}_{s,t} = \mathbf{X}_{s,u} \otimes \mathbf{X}_{u,t}$ for all $0 \leq s \leq u \leq t \leq T$.
        \item The shuffle-identity holds: $\langle w,\mathbf{X}_{s,t} \rangle\langle v,\mathbf{X}_{s,t} \rangle = \langle w\shuffle v,\mathbf{X}_{s,t} \rangle $ for all $(s,t)\in \Delta_{0,T}^{(2)}$ and $w,v \in \mathcal{W}$ with $|wv|\le \lfloor p \rfloor$.
    \end{enumerate}
    Finally, a weakly geometric rough path is called geometric, if it is the limit of a sequence $\pi_{\leq \lfloor p \rfloor}(\Sig{X^n})$, where $(X^n) \subset C^{1-var}$ with respect to the $p$ rough path norm $\Vert \cdot \Vert_{p-var}$. We denote by $\mathscr{C}^{p-var}_g([0,T|;\mathbb{R}^d)$ the space of weakly, resp. by $\mathscr{C}_{g}^{p-var,0}([0,T];\mathbb{R}^d)$ the space of geometric rough paths.
\end{definition}

\section{Signature transformations}\label{sec:sig_transform}

In this section, we introduce the change of coordinates for signatures that
lies at the center of this work.
Writing $Y=(A,X)$ for the partition of the components underlying
path, we provide an explicit construction of a graded automorphism
$\Psi:T((\mathbb{R}^d))\to T((\mathbb{R}^d))$ by recursively applying
integration by parts. With respect to the pairing induced by $\Psi$, the
signature coordinates are expressed entirely through the partial signature
of $X$ and iterated integrals against $\dd A$. The inverse $\Psi^{-1}$ is
likewise constructed explicitly. We then identify the product
$\gammashuffle$ that is dual to the  unshuffle coproduct under
this induced pairing, i.e., such that 
\begin{equation}\label{eq:char_gamma}
        \left\langle
        \alpha,
        \Sig{Y}
    \right\rangle_{\Psi}
    \left\langle
        \beta,
        \Sig{Y}
    \right\rangle_{\Psi} = \left\langle
        \alpha\gammashuffle\beta,
        \Sig{Y}
    \right\rangle_{\Psi}.
\end{equation}
The product admits a simple recursive description,
which in turn yields a direct recursion for the rows of $\Psi^{-1}$.

We then partially symmetrize over the alphabet
$\mathcal{A}^{\prime\prime}$. On the tensor side, this amounts to passing to
a genuine quotient of the tensor Hopf algebra. In the induced coordinates,
the partial signature terms of $X$ reduce to multivariate monomials in its
increments, while the corresponding dual product descends to the explicitly
defined product $\gammashufflesym$, which again admits a closed recursion.
We further establish an injectivity result for the partially symmetrized
signature.

Finally, we extend the coordinate representations to partially irregular
paths. In the unsymmetrized case, the construction requires a rough path
enhancement of $X$, whereas the partially symmetrized coordinates depend
only on the increments of $X$. This provides a natural framework for joint
rough and stochastic lifts and clarifies the relation to the partial rough
path spaces.

\subsection{Coordinate transformation by partial integration}\label{sec:pathwise_transform}
For any path of bounded variation $Y \in C^{1-var}([s,t],\mathbb{R}^{d})$ and fixed $(s,t) \in \Delta_{0,T}^{2}$, recall the signature $\Sig{Y}$ was defined in \eqref{def:signature}. Consider now an alphabet $\mathcal{A} = \{a_1, \dots, a_d\}$ and denote $Y^{a} = \langle a, Y\rangle$ for all $a\in\mathcal{A}$, where we recall the dual-pairing $\langle \cdot, \cdot \rangle$ introduced in Section~\ref{sec:tensor_shuffle_algebra}. In particular, we can pair any word $w=i_1\cdots i_n$ in the alphabet $\mathcal{A}$ with the signature \begin{equation}\label{eq:sig_pairing}
    \langle {i_1}\cdots {i_n},\Sig{Y}\rangle = \int_{{s<u_1<\cdots<u_n<t}}\dd {Y}^{{i_1}}_{u_1}\cdots \dd {Y}_{u_n}^{{i_n}}. 
\end{equation}
For a given partition of the alphabet\footnote{We always assume that for such a partition it holds $\mathcal{A}^\prime, \mathcal{A}^{\prime\prime} \neq \emptyset$.} $\mathcal{A} = \mathcal{A}^\prime \dot{\cup} \mathcal{A}^{\prime\prime}$
we divide the path $Y$ into two components \begin{equation}\label{eq:path_components}
    A = (A^{\blue{a}})_{\blue{a}\in\mathcal{A}^{\prime}} := (Y^{\blue{a}})_{\blue{a}\in\mathcal{A}^{\prime}},\qquad X = (X^{\orange{b}})_{\orange{b}\in\mathcal{A}^{\prime\prime}} := (Y^{\orange{b}})_{\orange{b}\in \mathcal{A}^{\prime\prime}}.
\end{equation}
\begin{example}\label{example:components}
    Consider a five-dimensional path $t\mapsto Y_t= (Y^1_t,\dots,Y^5_t)^\top$ and  $\mathcal{A}'=\{\blue{1},\blue{4}\} \subset \mathcal{A}=\{\blue{1},\red{2},\red{3},\blue{4},\red{5}\}$. The two components $(A,X)$ in our notation \eqref{eq:path_components} then read $$A_t =(A_t^1,A_t^4)^\top := (Y_t^1,Y_t^4)^\top, \qquad X_t=(X_t^2,X_t^3,X_t^5)^\top:= (Y_t^{2},Y_t^{3},Y_t^{5})^\top.$$
\end{example}
\begin{remark}\label{rmk:partial_sigs}
By restricting to words $w = \orange{i_1}\cdots \orange{i_n} \in \mathcal{W}_{\mathcal{A}^{\prime\prime}}$, we can extract the signature of $X$ from the signature of $Y$, setting
\begin{equation}\label{eq:partial_sig_X}
\langle w,\Sig{X}\rangle := \langle w,\Sig{Y}\rangle =\int_{{s<u_1<\cdots<u_n<t}}\dd{X}^{\orange{i_1}}_{u_1}\cdots \dd {X}_{u_n}^{\orange{i_n}}.
\end{equation}
Similarly we can extract the signature of the component $A$, restricting to words in $\mathcal{A}'$.    
\end{remark}
The choice of  $\mathcal{A}'\subset\mathcal{A}$ splits the
path $Y$ into the two components $(A,X)$ in
\eqref{eq:path_components}, which may play fundamentally different roles
in applications. The full signature $\Sig{Y}$ contains the partial
signatures $\Sig{A}$ and $\Sig{X}$, together with all mixed iterated
integrals against $\dd A$ and $\dd X$. The aim of this section is
to introduce a representation of $\Sig{Y}$ in different coordinates, in the sense of Definition~\ref{def:induced_pairing}, in which
the signature is represented only through  $\Sig{A}$, $\Sig{X}$, and
iterated integrals against the component $\dd A$. As we will see in the
subsequent sections, this coordinate representation is particularly
useful for deriving expected-signature formulas when $A$ is
deterministic and sufficiently regular. 

To achieve this, we will construct a graded linear map  $\Psi:T((\mathbb{R}^d))\rightarrow T((\mathbb{R}^d))$, such that the signature in $\Psi$-coordinates $(\langle w,\Sig{Y}\rangle_\Psi)_w$ (see Definition~\ref{def:induced_pairing}) is of the desired form. The
construction is based on a recursive application of integration by
parts, and we illustrate now the underlying idea at level two.

\begin{example}\label{ex:level_2_construction}
    For some alphabet $\mathcal{A}= \mathcal{A}'\cup \mathcal{A}''$, the second-level of $\Sig{Y}$ consists of \[
    \langle \blue{ij},\Sig{A}\rangle, \quad \langle \orange{ij},\Sig{X}\rangle, \quad \int_s^tX_{s,u}^{\orange{i}}\dd A_u^{\blue{j}}, \quad \int_s^tA_{s,u}^{\blue{i}}\dd X_u^{\orange{j}}, \qquad i,j \in \mathcal{A}.
    \]
    We see that the first three elements can be build from $\Sig{A},\Sig{X}$ or integration only against $\dd A$. For the last integral, an application of integration by parts shows $\int A \dd X=AX-\int X \dd A$, or in terms of words  $\blue{i}\orange{j}=\blue{i}\shuffle \orange{j}-\orange{j}\blue{i}$. Motivated by this, we define the sequence $(\Psi_w)_{w\in \mathcal{W}^{(2)}}$ by $\Psi_{w} =\blue{i}\shuffle \orange{j}$ if $w=\blue{i}\orange{j}$ and $\Psi_w=w$ else, so that (see Definition~\ref{def:induced_pairing}) $$\langle w, \Sig{Y}\rangle_{\Psi} =A_{s,t}^{\blue{i}}X_{s,t}^{\orange{j}} \quad \text{ if } w=\blue{i}\orange{j} \quad \text{ and } \quad \langle w, \Sig{Y}\rangle_{\Psi}= \langle w,\Sig{Y}\rangle \quad \text{ else.} $$ 
    Hence, all the level two $\Psi$-coordinates have the desired form. Moreover, the map $\Psi$ is clearly invertible, defining $\Psi^{-1}_{\blue{i}\orange{j}}=\blue{i}\orange{j}-\orange{j}\blue{i}$ and $\Psi^{-1}_w=w$ else, so that the standard coordinates can be recovered from $\langle w, \Sig{Y}\rangle=\langle \Psi^{-1}_w,\Sig{Y}\rangle_{\Psi}$.
\end{example}

In the remainder of this section, we generalize the preceding example
to all levels of the signature. More precisely, we construct a sequence
$(\Psi_w)_{w\in\mathcal W}$ such that the corresponding
$\Psi$-coordinates of $\Sig{Y}$ have the desired form. We then show that
the associated graded linear map (see \eqref{eq:linear_operator})
\begin{equation}\label{eq:linear_operator_2}
    \Psi(\mathbf a)
    =
    \sum_{w\in\mathcal W}
    e_w\langle\Psi_w,\mathbf a\rangle,
    \qquad
    \mathbf a\in T((\mathbb R^d)),
\end{equation}
is a graded automorphism. We 
construct its inverse explicitly  through an inverse sequence
$\bigl(\Psi^{-1}_w\bigr)_{w\in\mathcal W}$, so that the
standard signature coordinates can be recovered from the
$\Psi$-coordinates with
\[
    \langle w,\Sig{Y}\rangle
    =
    \langle\Psi^{-1}_w,\Sig{Y}\rangle_\Psi,
    \qquad
    w\in\mathcal W.
\] which we will exploit in the forthcoming sections. For our first result, recall the decomposition operator $\Delta_{\mathcal{A}'}$ defined in \eqref{def:Delta_operator}.
\begin{proposition}\label{prop:psi_inv}
    We recursively define a graded sequence $(\Psi_w)_{w \in \mathcal{W}_{\mathcal{A}}}\subset \mathbb{R}\langle \mathcal{A} \rangle $ by
    \begin{equation}\label{eq:Psi_inverse_explicit}
        \begin{dcases}
        \bigg.\Psi_{w} = w, &\text{for } w \in \mathcal{W}_{\mathcal{A}^{\prime\prime}}, \\   
     \Psi_w= \left (\Psi_{w_0\blue{j_1}\cdots \blue{j_{k-1}}w_{k-1}} \right )\blue{j_k} \shuffle w_k, & \text{for } w \in \mathcal{W}_{\mathcal{A}}\setminus\mathcal{W}_{\mathcal{A}^{\prime\prime}},
    \end{dcases}
    \end{equation}  
    with $((w_0, \dots, w_{k}), \blue{j_1 \cdots j_k}) = \decomp{\mathcal{A}^\prime}(w) $.
    Then, for any $Y=(A,X)\in C^{1-var}([s,t]; \R^d)$, we have \begin{equation}\label{def:gamma_signature}
        \langle w, \Sig{Y} \rangle_{\Psi} = 
        \langle w_k, \Sig{X}\rangle\int_{\Delta^{k}_{s,t}}\prod_{j=1}^{k}  
        \langle w_{j-1}, \Sig{X}_{s,t_j}\rangle \dd A^{\blue{i_{j}}}_{t_{j}},
    \end{equation} where we recall $\langle w,\cdot \rangle_\Psi:=\langle \Psi_w,\cdot \rangle$.  
    
\end{proposition}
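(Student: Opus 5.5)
Since $\Psi_w$ is defined by recursion on the number $k$ of $\mathcal{A}'$-letters in $w$, I argue by induction on $k$, using two standard facts for $Y\in C^{1-var}$: the shuffle identity $\langle u\shuffle v,\Sig{Y}_{s,t}\rangle=\langle u,\Sig{Y}_{s,t}\rangle\langle v,\Sig{Y}_{s,t}\rangle$, and the integral recursion $\langle u\blue{j},\Sig{Y}_{s,t}\rangle=\int_s^t\langle u,\Sig{Y}_{s,r}\rangle\,\dd Y^{\blue{j}}_r$ for any $u\in\mathbb{R}\langle\mathcal{A}\rangle$ (extended linearly). Both identities hold with the end time $t$ replaced by any intermediate time, which is what makes the induction run. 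I therefore prove the slightly stronger statement that \eqref{def:gamma_signature} holds on every subinterval $[s,r]\subseteq[s,t]$ with fixed start point $s$.

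\emph{Base case $k=0$.} Here $w=w_0\in\mathcal{W}_{\mathcal{A}''}$ and $\Psi_w=w$. By \Cref{rmk:partial_sigs}, $\langle w,\Sig{Y}_{s,r}\rangle=\langle w_0,\Sig{X}_{s,r}\rangle$. This is the claimed formula with an empty product and a trivial simplex integral.

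\emph{Inductive step.} Let $w=w_0\blue{j_1}\cdots\blue{j_k}w_k$ and put $w'=w_0\blue{j_1}\cdots\blue{j_{k-1}}w_{k-1}$, which has $k-1$ letters from $\mathcal{A}'$. The recursion gives $\Psi_w=(\Psi_{w'})\blue{j_k}\shuffle w_k$. The shuffle identity at time $r$ then yields
\[
\langle\Psi_w,\Sig{Y}_{s,r}\rangle=\langle\Psi_{w'}\blue{j_k},\Sig{Y}_{s,r}\rangle\,\langle w_k,\Sig{X}_{s,r}\rangle ,
\]
where the $\mathcal{A}''$-word $w_k$ only sees $X$. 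The integral recursion turns the first factor into $\int_s^r\langle\Psi_{w'},\Sig{Y}_{s,t_k}\rangle\,\dd A^{\blue{j_k}}_{t_k}$, because $Y^{\blue{j_k}}=A^{\blue{j_k}}$. Applying the induction hypothesis to $w'$ on $[s,t_k]$ gives
\[
\langle\Psi_{w'},\Sig{Y}_{s,t_k}\rangle=\langle w_{k-1},\Sig{X}_{s,t_k}\rangle\int_{\Delta^{k-1}_{s,t_k}}\prod_{j=1}^{k-1}\langle w_{j-1},\Sig{X}_{s,t_j}\rangle\,\dd A^{\blue{j_j}}_{t_j}.
\]
Substituting this and using Fubini (nested Riemann--Stieltjes integrals over simplices, valid since all integrands are continuous and $A$ has bounded variation) combines the integrals over $\Delta^{k-1}_{s,t_k}$ and $t_k\in[s,r]$ into one integral over $\Delta^k_{s,r}$. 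The factor $\langle w_{k-1},\Sig{X}_{s,t_k}\rangle$ becomes the $j=k$ term of the product. Setting $r=t$ gives \eqref{def:gamma_signature}.

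\emph{Gradedness and the main care point.} $\Psi_w$ is homogeneous of degree $|w|$, since concatenation with one letter and shuffle with $w_k$ add degrees. The only delicate step is bookkeeping rather than analysis. The recursion must be stated as $\langle u\blue{j},\Sig{Y}_{s,r}\rangle=\int_s^r\langle u,\Sig{Y}_{s,\cdot}\rangle\,\dd Y^{\blue{j}}$ for a linear combination $u$. The induction hypothesis must hold for all intermediate end times with start point $s$ fixed, not merely on $[s,t]$. Finally, the indices must be matched: the statement's $\blue{i_j}$ are the letters $\blue{j_j}$ produced by $\decomp{\mathcal{A}'}$.
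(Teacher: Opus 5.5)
Your proposal is correct and follows essentially the same route as the paper. The paper also inducts on the number $k$ of $\mathcal{A}'$-letters, splits off the terminal block $w_k$ via the shuffle identity, writes the appended letter $j_k$ as an integral against $\dd A^{j_k}$, and applies the induction hypothesis on the restricted intervals $[s,u]$. Your explicit strengthening of the claim to all intermediate end times is exactly what the paper handles with the remark $\mathrm{Sig}(Y)_{s,u}=\mathrm{Sig}(Y_{\cdot\wedge u})$.
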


\begin{proof}
We proceed by induction over the number of decomposing factors $k\in\N$ in $\Delta_{\mathcal{A}^\prime}(w) =((w_0, \dots, w_{k}), \blue{j_1 \cdots j_k})$, that is the number of letters $\blue{j_1},\dots,\blue{j_k} \in \mathcal{A}'$. For $k=0$, i.e.~$w\in \mathcal{W}_{\mathcal{A}^{\prime\prime}}$,
the claim follows directly from the fact that $\Psi_w=w$.
Now assume that the claim holds for some $k\in\N$. 
Then for $w\in\mathcal{W}_{\mathcal{A}}$ with $((w_0, \dots, w_{k+1}), \blue{j_1 \cdots j_{k+1}}) = \decomp{\mathcal{A}^\prime}(w)$ we have by the induction hypothesis that 
$$\langle \Psi_{w_0\blue{j_1}w_1\cdots \blue{j_k}w_k}, \Sig{Y}\rangle = \langle w_k, \Sig{X}\rangle\int_{\Delta^{k}_{s,t}}\prod_{j=1}^{k}  
        \langle w_{j-1}, \Sig{X}_{s,t_j}\rangle \dd A^{\blue{i_{j}}}_{t_{j}}.$$
Since $\Sig{Y}_{s,u} = \Sig{Y_{\cdot \wedge u}}$ for all $u\in[s,t]$ the above identity holds also on restricted time intervals.
Hence, by definition of $\Psi_w$ and the shuffle property of $\mathrm{Sig}$, we have
\begin{align*}
    \langle \Psi_w,\Sig{Y} \rangle 
    & := \Big.\langle \left (\Psi_{w_0\blue{j_1}w_1\cdots \blue{j_k}w_k}\right )\blue{j_{k+1}}\shuffle w_{k+1}, \Sig{Y}\rangle \\ & 
    = \langle w_{k+1}, \Sig{X} \rangle\int_s^t \langle \Psi_{w_0\blue{j_1}w_1\cdots \blue{j_k}w_k}, \Sig{Y}_{s,u}\rangle \dd A_u^{\blue{j_{k+1}}} \\ &
    = \langle w_{k+1}, \Sig{X}\rangle\int_s^t \langle w_k, \Sig{X}_{s,u}\rangle\int_{\Delta^{k}_{s,u}}\prod_{j=1}^{k}  
        \langle w_{j-1}, \Sig{X}_{s,t_j}\rangle \dd A^{\blue{i_{j}}}_{t_{j}} \dd A_u^{\blue{j_{k+1}}} \\ &
    = \langle w_{k+1}, \Sig{X}\rangle\int_{\Delta^{k+1}_{s,t}}\prod_{j=1}^{k+1}\langle w_{j-1}, \Sig{X}_{s,t_j}\rangle \dd A^{\blue{i_{j}}}_{t_j},
\end{align*} 
for all $Y\in  C^{1-var}([s,t],\R^d)$, which proves the claim.

\end{proof}

In the following example we provide some explicit computations of the operator $\Delta_{\mathcal{A}'}$  and the $\Psi$-coordinates.
\begin{example}\label{example:components_continued}
Remaining in the framework of Example \ref{example:components}, we find \begin{enumerate}
        \item For $w=\orange{i_1\cdots i_k}\in \mathcal{W}_{\mathcal{A}^{\prime\prime}}$, we have  $\Delta_{\mathcal{A}'}(w)=\{(\orange{i_1}\cdots \orange{i_k}),\emptyset\}$ and $\langle w,\Sig{Y}\rangle_{\Psi}= \langle w,\Sig{X}\rangle$.
        \item For $w=\blue{j_1}\cdots \blue{j_k} \in \mathcal{W}_{ \mathcal{A}'}$, we have  $\Delta_{\mathcal{A}'}(w)=\{(\emptyset, \cdots, \emptyset),\blue{j_1}\cdots \blue{j_k}\}$ and $$\langle w,\Sig{Y}\rangle_{\Psi} := \langle \emptyset,\Sig{X} \rangle \int_{\Delta_{s,t}^{k}}\prod_{j=1}^{k} \langle \emptyset,\Sig{X}_{s,t_j}\rangle \dd A_{t_j}^{\blue{i_j}} = \langle w, \Sig{A}\rangle.$$ 

        \item For $w=\blue{1}\orange{25}\blue{41}\orange{32}\in \mathcal{W}_{\mathcal A}$ we have $\Delta_{\mathcal{A}'}(w)=\{(\emptyset,\orange{25},\emptyset,\orange{32}),\blue{141}\}$ and \begin{align*}
            \langle w,\Sig{Y}\rangle_\Psi & =  \langle \orange{32},\Sig{X}\rangle \int_{\Delta_{s,t}^3}\langle \emptyset,\Sig{X}_{s,u_1}\rangle \langle \orange{25},\Sig{X}_{s,u_2}\rangle\langle \emptyset,\Sig{X}_{s,u_3}\rangle \dd A_{u_1}^{\blue{1}}\dd A_{u_2}^{\blue{4}}\dd A_{u_3}^{\blue{1}} \\ & =  \langle \orange{32},\Sig{X}\rangle \int_{\Delta_{s,t}^3}\langle \orange{25},\Sig{X}_{s,u_2}\rangle \dd A_{u_1}^{\blue{1}}\dd A_{u_2}^{\blue{4}}\dd A_{u_3}^{\blue{1}}.
        \end{align*}
    \end{enumerate}   
\end{example}
\begin{remark}\label{rem:shear_like} The linear map $\Psi$ defined in \eqref{eq:linear_operator_2} through the
sequence $(\Psi_w)$ of Proposition~\ref{prop:psi_inv} may be viewed as a
graded shear-like transformation, in analogy with elementary shears in affine
geometry. Given a vector space $V$, a shear map (see, e.g., \cite[Section 2.7]{gallier2011geometric}) $\Phi:V\rightarrow V$ translates all vectors parallel to a given subspace $W\subset V$. More precisely, writing $V=W'\oplus W$, a shear map is such that $\Phi(w',w)=(w',w)+(0,L(w'))$, for some linear $L:W'\rightarrow W$. 

Restricted to tensor level two, our transform $\Psi$ reads \begin{equation*}\label{eq:shears_example}
\begin{aligned}
&\Psi\big(
    (\blue{ij},{\orange{ij}}),
    ({\orange{j}\blue{i}},{\blue{i}\orange{j}})
\big)
 =
\big(
    (\blue{ij},\orange{ij}),
    (\orange{j}\blue{i},\blue{i}\shuffle\orange{j})
\big)
 =
\big(
    (\blue{ij},\orange{ij}),
    (\orange{j}\blue{i},\blue{i}\orange{j})
\big)
+
\big(
    (0,0),
    (0,\orange{j}\blue{i})
\big),
\end{aligned}
\end{equation*} which is by definition a shear map with respect to the subspace of mixed words. Now by definition, for higher tensor-levels the map $\Psi$ continues to leave $\mathcal{W}_{\mathcal{A}'}$ and $\mathcal{W}_{\mathcal{A}''}$ invariant, and the transformation is recursively applied together with shuffling $\mathcal{A}''$-words, which is a linear operation. With an inductive argument one may show that for any $n\in \mathbb{N}$, the map $\Psi$ factors into a finite composition of shear maps.
\end{remark}
At this point it is far from clear that $\Psi$ is invertible. In the next result we construct its inverse, which compared to $\Psi$ is only implicitly defined. 
\begin{proposition}\label{prop:Psi}
    Consider the following system of equations for a sequences $(\Psi^{-1}_w)_{w\in \mathcal{W}_\mathcal{A}} \subset \R\langle \mathcal A\rangle$:
    \begin{equation}\label{eq:implicit_psi}
    \begin{dcases}
        \bigg.\Psi^{-1}_{w} = w, &\text{for } w \in \mathcal{W}_{\mathcal{A}^{\prime\prime}}, \\   
        \begin{split}
            \Psi^{-1}_w =& \left (\Psi^{-1}_{w_0\cdots \blue{j_{k-1}}w_{k-1}}\right )\blue{j_{k}}\orange{i_1 \cdots i_m}\\
        &- \sum_{l=1}^{m} \sum_{v\in \mathcal{W}^{(|w|)}_\mathcal{A}} \big\langle \big(w_0\cdots \blue{j_{k-1}}w_{k-1} \shuffle \orange{i_1}\cdots \orange{i_{l}}\big)\blue{j_k}\orange{i_{l+1}}\cdots \orange{i_m}, \,e_v \big\rangle \Psi^{-1}_{v}
        \end{split}, & \text{for } w \in \mathcal{W}_{\mathcal{A}}\setminus\mathcal{W}_{\mathcal{A}^{\prime\prime}},
    \end{dcases}
    \end{equation}  with $((w_0, \dots, w_{k-1}, \orange{i_1 \cdots i_m}), \blue{j_1 \cdots j_k}) = \decomp{\mathcal{A}^\prime}(w)$.
    It holds that \eqref{eq:implicit_psi} is uniquely solved by a graded sequence $(\Psi^{-1}_w)_{w\in\mathcal{W}_{\mathcal{A}}}$.
    Moreover, for any $Y\in C^{1-var}([s,t]; \R^d)$ we have   \begin{equation*}
        \langle w,\Sig{Y}\rangle = \langle \Psi_w^{-1},\Sig{Y}\rangle_\Psi.
    \end{equation*}
\end{proposition}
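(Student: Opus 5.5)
The plan is to reduce the analytic statement to a purely algebraic identity, namely
\[
    \Psi^\ast\big(\Psi^{-1}_w\big)=w\qquad\text{for all } w\in\mathcal{W}_{\mathcal{A}},
\]
where $\Psi^\ast$ is the graded transpose of $\Psi$, so that $\Psi^\ast(u)=\Psi_u$ for words $u$ and $\Psi^\ast$ is extended linearly. Given this identity, Definition~\ref{def:induced_pairing} immediately yields $\langle \Psi^{-1}_w,\Sig{Y}\rangle_\Psi=\langle \Psi^\ast(\Psi^{-1}_w),\Sig{Y}\rangle=\langle w,\Sig{Y}\rangle$. The identity holds for every $\mathbf a$, not only for signatures. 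Both well-posedness and the identity will be proved by induction along one and the same well-founded order on words.

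\textbf{Well-posedness.} Fix the level $n=|w|$. For $w$ with $\decomp{\mathcal{A}'}(w)=((w_0,\dots,w_{k-1},\orange{i_1\cdots i_m}),\blue{j_1\cdots j_k})$, write $p:=w_0\blue{j_1}\cdots\blue{j_{k-1}}w_{k-1}$, so that $w=p\,\blue{j_k}\,\orange{i_1\cdots i_m}$. The first term of \eqref{eq:implicit_psi} only uses $\Psi^{-1}_p$ with $|p|<n$.

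Every word $v$ appearing in the sum is of the form $v=u\,\blue{j_k}\,\orange{i_{l+1}\cdots i_m}$, where $u$ is a word in $p\shuffle\orange{i_1\cdots i_l}$ and $l\ge 1$. Hence $|v|=n$, and the trailing $\mathcal{A}''$-block of $v$ (after its last $\mathcal{A}'$-letter $\blue{j_k}$) has length $m-l<m$.

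I therefore order words lexicographically by (length, length of the trailing $\mathcal{A}''$-block after the last $\mathcal{A}'$-letter). The base case $w\in\mathcal{W}_{\mathcal{A}''}$ is given directly. The recursion then defines $\Psi^{-1}_w$ uniquely, since it only refers to strictly smaller words. Gradedness follows inductively, because shuffles and concatenations of homogeneous elements are homogeneous of the summed degree.

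\textbf{The identity.} The key observation is a linear extension of \eqref{eq:Psi_inverse_explicit}: for any $x\in\operatorname{span}(\mathcal{W}_{\mathcal{A}})$, $\blue{j}\in\mathcal{A}'$ and $u\in\mathcal{W}_{\mathcal{A}''}$,
\[
    \Psi^\ast(x\,\blue{j}\,u)=\Psi^\ast(x)\,\blue{j}\shuffle u.
\]
This holds because, for every word $x$, the decomposition of $x\blue{j}u$ has last block $u$ and prefix $x$. Applying this with $x=\Psi^{-1}_p$ and using the induction hypothesis $\Psi^\ast(\Psi^{-1}_p)=p$ gives
\[
    \Psi^\ast\big(\Psi^{-1}_p\,\blue{j_k}\,\orange{i_1\cdots i_m}\big)=p\,\blue{j_k}\shuffle\orange{i_1\cdots i_m}.
\]
Next I use the standard expansion, obtained by locating the position of the last letter $\blue{j_k}$:
\[
    p\,\blue{j_k}\shuffle\orange{i_1\cdots i_m}=\sum_{l=0}^m\big(p\shuffle\orange{i_1\cdots i_l}\big)\,\blue{j_k}\,\orange{i_{l+1}\cdots i_m}.
\]
The $l=0$ term is exactly $w$. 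For the subtracted sum in \eqref{eq:implicit_psi}, the induction hypothesis applies to each $v$ (all strictly smaller in the order), giving $\Psi^\ast(\Psi^{-1}_v)=v$. Hence the subtracted sum maps under $\Psi^\ast$ to
\[
    \sum_{l\ge1}\sum_v\langle(p\shuffle\orange{i_1\cdots i_l})\blue{j_k}\orange{i_{l+1}\cdots i_m},e_v\rangle\,v=\sum_{l=1}^m\big(p\shuffle\orange{i_1\cdots i_l}\big)\,\blue{j_k}\,\orange{i_{l+1}\cdots i_m},
\]
which cancels the terms $l\ge1$ of the expansion and leaves $\Psi^\ast(\Psi^{-1}_w)=w$.

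The main obstacle is choosing the induction order correctly, so that the implicit system is genuinely triangular, and verifying that the shuffle expansion matches the subtracted terms exactly. Both rest on the "last $\mathcal{A}'$-letter" bookkeeping above.
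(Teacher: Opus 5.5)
Your argument is correct, and its skeleton matches the paper's. The paper also runs a nested induction, outer on the number $k$ of $\mathcal{A}'$-letters and inner on the length $m$ of the terminal $\mathcal{A}''$-block; you order by total length first, which is equally well-founded. Both proofs use the same two ingredients: the recursion for $\Psi$ at the last $\mathcal{A}'$-letter, and the expansion of $p\blue{j_k}\shuffle\orange{i_1\cdots i_m}$ according to the position of $\blue{j_k}$.

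The difference is the level at which the identity is verified. The paper proves $\langle\Psi^{-1}_w,\Sig{Y}\rangle_\Psi=\langle w,\Sig{Y}\rangle$ only after pairing with a signature. It treats $m=0$ through the integral representation of Proposition~\ref{prop:psi_inv}. For $m\geq 1$ it uses $\langle x\blue{j}u,\Sig{Y}\rangle_\Psi=\langle u,\Sig{X}\rangle\langle x\blue{j},\Sig{Y}\rangle_\Psi$ together with the shuffle property of $\Sig{Y}$. That is exactly your identity $\Psi^\ast(x\blue{j}u)=(\Psi^\ast(x)\blue{j})\shuffle u$, evaluated on signatures.

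You instead prove $\Psi^\ast(\Psi^{-1}_w)=w$ directly in $\mathbb{R}\langle\mathcal{A}\rangle$, with no analysis. This gives a strictly stronger conclusion: $\Psi^{-1}\circ\Psi=\mathrm{Id}$ on all of $T((\mathbb{R}^d))$. Since both maps are graded and each level is finite-dimensional, $\Psi\circ\Psi^{-1}=\mathrm{Id}$ follows as well. Theorem~\ref{prop:main_result} is then immediate, without the detour through truncated signatures spanning $T^{\leq N}(\mathbb{R}^d)$ via \cite[Lemma~5]{diehl2019invariants}. This is also the style of argument the paper adopts later in Corollary~\ref{cor:Psi_gamma_recursion}.

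The paper's version keeps the partial-integration meaning of each step visible. As written, however, it establishes the identity only on signatures and needs the spanning argument to extend it.
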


\begin{remark}
Although it may not be immediately visible, one observes that in coherence with Example~\ref{example:components_continued} it holds $\Psi^{-1}_v = v$ for all $v \in \mathcal{W}_{\mathcal{A}^\prime}$.
\end{remark}
\begin{proof}
    We will use a nested induction over the number of decomposing factors $k\in\mathbb{N}$ and the length of the suffix $m\in\N$ to prove that for all $w\in \mathcal{W}_\mathcal{A}$ the term $\Psi^{-1}_w$ is uniquely determined by \eqref{eq:implicit_psi}, $\Psi^{-1}_{w} \in \mathrm{span}(\mathcal{W}^{(|w|)})$ and that it holds $\langle \Psi^{-1}_w, \Sig{Y}\rangle_\Psi = \langle w, \Sig{Y}\rangle$ for all $Y\in C^{1-var}([s,t]; \R^d)$.
    For $k=0$, i.e., 
    for any $w \in \mathcal{W}_{\mathcal{A}^{\prime\prime}}$ the claim follows immediately since again  $\Psi^{-1}_w = w$.
    Now assume that for some $k \in \N$ the induction claim holds for all words $v \in \mathcal{W}_{\mathcal{A}}$ with $((v, \dots, v_{k}), \blue{j_1 \cdots j_k}) = \decomp{\mathcal{A}^\prime}(w)$.
    We next want to prove that the claim holds for all $w\in \mathcal{W}_{\mathcal{A}}\setminus\mathcal{W}_{\mathcal{A}^{\prime\prime}}$ with $((w_0, \dots, w_{k}, \orange{i_1 \cdots i_m}), \blue{j_1 \cdots j_{k+1}}) = \decomp{\mathcal{A}^\prime}(w)$.
    To this end we start a second induction over $m \in\mathbb{N}$. Let $Y\in C^{1-var}([s,t]; \R^d)$ be arbitrary.
    For $m=0$, the equation \eqref{eq:implicit_psi} simplifies to
    \begin{equation}\label{eq:induct_proof_psi_inv}\Psi^{-1}_{w}= \left (\Psi^{-1}_{w_0\cdots \blue{j_{k}}w_{k}}\right)\blue{j_{k+1}},\end{equation} which uniquely determines $\Psi^{-1}_{w}\in \mathrm{span}(\mathcal{W}^{(|w|)})$. Furthermore, we have
    \begin{align*}
        \langle \Psi^{-1}_w,\Sig{Y}\rangle_\Psi &= \int_s^t \langle \Psi^{-1}_{w_0\cdots \blue{j_{k}}w_{k}},\Sig{Y}_{s,u}\rangle dA^{\blue{j_{k+1}}}_u \\
        &=\int_s^t\langle w_0\cdots \blue{j_{k}}w_{k},\Sig{Y}_{s,u}\rangle dA_u^{\blue{j_{k+1}}}\\
        &=\langle w,\Sig{Y}\rangle,
    \end{align*}
    where the first equality uses Proposition~\ref{prop:psi_inv} and \eqref{eq:induct_proof_psi_inv}, the second equality uses the induction hypothesis, and the last equality the definition of the signature.
    For $m\ge 1$, we notice that the right-hand side of \begin{equation*}
    \Psi^{-1}_w = \left(\Psi^{-1}_{w_0\cdots \blue{j_{k}}w_{k}}\right)\blue{j_{k+1}}\orange{i_1 \cdots i_m} - \sum_{l=1}^{m} \sum_{v\in \mathcal{W}^{(|w|)}_\mathcal{A}} \big\langle \big(w_0\cdots \blue{j_{k}}w_{k} \shuffle \orange{i_1}\cdots \orange{i_{l}}\big)\blue{j_{k+1}}\orange{i_{l+1}}\cdots \orange{i_m}, \,e_v \big\rangle \Psi^{-1}_{v}
    \end{equation*}
   
    only requires the terms $\Psi^{-1}_{w_0\cdots \blue{j_{k}}w_{k}}$ and $\Psi^{-1}_{v}$ for $v\in \mathcal{W}_{\mathcal{A}}$ with $((v_0, \dots, v_{k+1}), \blue{j_1 \cdots j_{k+1}}) = \decomp{\mathcal{A}^\prime}(v)$ and $|v_{k+1}|< m$, which are already determined by the induction hypothesis. 
    One readily checks that also $\Psi^{-1}_{w} \in \mathrm{span}(\mathcal{W}^{(|w|)})$.
    Further, on the one hand
    \begin{align*}
       \big\langle \left (\Psi^{-1}_{w_0\cdots \blue{j_{k}}w_{k}}\right)\blue{j_{k+1}}w_{k+1}, \Sig{Y} \big\rangle_\Psi 
        &= \big\langle w_{k+1},\Sig{X}\big\rangle \big\langle\left(\Psi^{-1}_{w_0\cdots \blue{j_{k}}w_{k}}\right)\blue{j_{k+1}}, \Sig{Y}\big\rangle_\Psi \\ 
        &= \big\langle w_{k+1},\Sig{X}\big\rangle \big\langle w_0\cdots \blue{j_{k}}w_{k}\blue{j_{k+1}}, \Sig{Y}\big\rangle \\
        &= \big\langle (w_0\cdots \blue{j_{k}}w_{k}\blue{j_{k+1}}) \shuffle w_{k+1}, \Sig{Y} \big\rangle,
    \end{align*}
    where again the first equality uses Proposition~\ref{prop:psi_inv}, the second equality the induction hypothesis, and the last equality the shuffle property of the signature.
    On the other hand, it follows from linearity, the induction hypothesis and the shuffle identity for the signature that
    \begin{align*}
        \bigg\langle \sum_{l=1}^{m}\sum_{v\in \mathcal{W}_\mathcal{A}^{(|w|)}} &\big\langle \big(w_0\cdots \blue{j_{k}}w_{k} \shuffle \orange{i_1}\cdots \orange{i_{l}}\big)\blue{j_{k+1}}\orange{i_{l+1}}\cdots \orange{i_m}, \,e_v \big\rangle \Psi^{-1}_{v}, \;\Sig{Y}\bigg\rangle_\Psi  \\ 
        &= \sum_{l=1}^{m+1}\sum_{v\in \mathcal{W}_\mathcal{A}^{(|w|)}}\big\langle \big(w_0\cdots \blue{j_{k}}w_{k} \shuffle \orange{i_1}\cdots \orange{i_{l}}\big)\blue{j_{k+1}}\orange{i_{l+1}}\cdots \orange{i_m}, \,e_v \big\rangle  \left \langle \Psi^{-1}_{v}, \Sig{Y} \right \rangle_\Psi \\ 
        &= \sum_{l=1}^{m+1}\sum_{v\in \mathcal{W}_\mathcal{A}^{(|w|)}}\big\langle \big(w_0\cdots \blue{j_{k}}w_{k} \shuffle \orange{i_1}\cdots \orange{i_{l}}\big)\blue{j_{k+1}}\orange{i_{l+1}}\cdots \orange{i_m}, \,e_v \big\rangle  \left \langle v, \Sig{Y} \right \rangle \\ 
        &= \left\langle \sum_{l=1}^{m+1} \big(w_0\cdots \blue{j_{k}}w_{k} \shuffle \orange{i_1}\cdots \orange{i_{l}}\big)\blue{j_{k+1}}\orange{i_{l+1}}\cdots \orange{i_m}, \;\Sig{Y} \right \rangle.
    \end{align*} 

    Finally, the claim follows by noting that form the definition of the shuffle-product \eqref{def:shuffle-product} we have
    $$w = (w_0\cdots \blue{j_{k}}w_{k}\blue{j_{k+1}}) \shuffle w_k - \sum_{l=1}^{m} \big(w_0\cdots \blue{j_{k}}w_{k} \shuffle \orange{i_1}\cdots \orange{i_{l}}\big)\blue{j_{k+1}}\orange{i_{l+1}}\cdots \orange{i_m}.$$
\end{proof}

Combining Propositions \ref{prop:psi_inv} and \ref{prop:Psi}, the main theoretical result of this section can be summarized as follows.

\begin{theorem}\label{prop:main_result}
    The mappings $\Psi^{-1},\Psi: T((\R^d)) \rightarrow T((\R^d))$ are automorphisms such that $$\Psi^{-1} \circ \Psi = \Psi\circ \Psi^{-1}= \mathrm{Id}_{T((\R^d))}.$$ 
\end{theorem}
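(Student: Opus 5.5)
Both $\Psi$ and $\Psi^{-1}$ are graded maps built from row-vector representations, so the plan is to reduce everything to a statement about the words $\Psi_w$ and $\Psi^{-1}_w$ on each finite-dimensional level $\operatorname{span}(\mathcal{W}^{(n)})$. Concretely, for graded maps we have $(\Psi\circ\Psi^{-1})^\ast=(\Psi^{-1})^\ast\circ\Psi^\ast$. So it suffices to show that the graded transposes satisfy
\[
\langle \Psi^{-1}_w\rangle\text{ expanded via }\Psi:\quad \sum_{v}\langle \Psi^{-1}_w,e_v\rangle\,\Psi_v = w\qquad\text{for all } w\in\mathcal{W}_{\mathcal{A}}.
\]
The left-hand side is exactly $\Psi^\ast(\Psi^{-1}_w)$, so this says $\Psi^\ast\circ(\Psi^{-1})^\ast=\mathrm{Id}$ on $\mathbb{R}\langle\mathcal{A}\rangle$. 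Since each level is finite-dimensional and both maps preserve it, a one-sided inverse on each level is automatically two-sided. Transposing back then gives $\Psi\circ\Psi^{-1}=\Psi^{-1}\circ\Psi=\mathrm{Id}$ on each $\pi^n T((\mathbb{R}^d))$, hence on the product $T((\mathbb{R}^d))$.

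To prove the identity above, I will use Proposition~\ref{prop:Psi}. It gives, for every $Y\in C^{1-var}$, that $\langle \Psi^\ast(\Psi^{-1}_w),\Sig{Y}\rangle=\langle \Psi^{-1}_w,\Sig{Y}\rangle_\Psi=\langle w,\Sig{Y}\rangle$. Therefore the homogeneous element $\Psi^\ast(\Psi^{-1}_w)-w\in\operatorname{span}(\mathcal{W}^{(n)})$ annihilates all signatures of bounded variation paths. It remains to invoke the classical fact that linear functionals on words are separated by signatures, i.e.\ that $\{\pi^n\Sig{Y}_{0,T}: Y\in C^{1-var}\}$ spans $(\mathbb{R}^d)^{\otimes n}$.

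The main obstacle is justifying this spanning property within the paper's setting. My first attempt is the standard argument. For piecewise linear $Y$ built from segments $\lambda_1 x_1,\dots,\lambda_n x_n$, Chen's relation gives $\Sig{Y}=\exp(\lambda_1x_1)\otimes\cdots\otimes\exp(\lambda_nx_n)$. Its coefficient of $\lambda_1\cdots\lambda_n$ at level $n$ is $x_1\otimes\cdots\otimes x_n$. Since $\lambda\mapsto\pi^n\Sig{Y}$ is polynomial, that coefficient lies in the span of the values, and such products span $(\mathbb{R}^d)^{\otimes n}$. A functional vanishing on all of them is therefore zero.

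A purely algebraic alternative avoids analysis altogether: show directly by the same nested induction on $(k,m)$ as in Proposition~\ref{prop:Psi} that $\Psi^\ast\circ(\Psi^{-1})^\ast=\mathrm{Id}$ on words. Here $\Psi^\ast$ is a unitriangular map with respect to a suitable ordering (the leading term of $\Psi_w$ being $w$), which yields invertibility immediately. I would keep the analytic route as the main argument and mention triangularity only as a remark.
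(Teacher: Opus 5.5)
Your proposal is correct and is essentially the paper's argument. Proposition~\ref{prop:Psi} gives the identity on signatures, the spanning of each tensor level by signatures of bounded-variation paths extends it to all of $T((\mathbb{R}^d))$ levelwise, and finite-dimensionality of each level makes the one-sided inverse two-sided. The only differences are cosmetic: you phrase everything through the graded transposes ($\Psi^\ast\circ(\Psi^{-1})^\ast=\mathrm{Id}$), and you prove the spanning lemma directly by polarizing $\exp(\lambda_1x_1)\otimes\cdots\otimes\exp(\lambda_nx_n)$ instead of citing \cite[Lemma~5]{diehl2019invariants}.
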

\begin{proof}
By Proposition~\ref{prop:Psi} and Definition~\ref{def:induced_pairing}, for any $\mathbf{a}=\Sig{Y} \in T((\mathbb{R}^d))$, we have \[
\langle w,\Psi^{-1}(\Psi(\mathbf{a}))\rangle \rangle:=\langle w,\Psi(\Psi^{-1}(\mathbf{a}))\rangle \rangle := \langle \Psi_w^{-1},\mathbf{a}\rangle_\Psi = \langle w,\mathbf{a}\rangle,
\] so that the claim holds restricted to signature elements. Moreover, by Propositions~\ref{prop:psi_inv}-\ref{prop:Psi}, the sequences $(\Psi^{-1}_w)_{w\in\mathcal{W}_\mathcal{A}}$ and $(\Psi_w)_{w\in\mathcal{W}_\mathcal{A}}$ are graded, so that $\Psi^{-1} \circ \Psi=\mathrm{Id}$ restricted to the set $$G^N := \{\pi_{\le N}\Sig{Y}: Y\in C^{1-var}([s,t],\R^d)\} \subset T^{\leq N}(\R^d).$$ 
On the other hand, it was shown in \cite[Lemma 5]{diehl2019invariants} that $$\mathrm{span}(G^N) = T^{\leq N}(\R^d).$$ 
Therefore $\Psi^{-1} \circ \Psi= \mathrm{Id}$ on $T^{\leq N}(\R^d)$, and by linearity this implies $\Psi\circ \Psi^{-1} = \mathrm{Id}$ on the same space. Since this holds for all truncation levels, this already concludes the proof. 
\end{proof}

We conclude this section by recording the Hopf-algebraic identities in the
$\Psi$-coordinates introduced above. In particular, we identify the product
dual to the unshuffle coproduct with respect to the induced pairing from
\Cref{def:induced_pairing}, and thereby obtain the resulting character identity
for the $\Psi$-coordinates of the signature. We then derive an intrinsic
recursive description of this product and use it to obtain a corresponding
recursion for the rows of $\Psi^{-1}$.

By the graded-transpose convention from \Cref{sec:graded_morphisms}, the maps $\Psi^\ast,
    (\Psi^{-1})^\ast
    \colon
    \mathbb{R}\langle\mathcal{A}\rangle
    \longrightarrow
    \mathbb{R}\langle\mathcal{A}\rangle$
satisfy
\[
    \Psi^\ast(w)=\Psi_w,
    \qquad
    (\Psi^{-1})^\ast(w)=\Psi^{-1}_w,
    \qquad
    (\Psi^{-1})^\ast=(\Psi^\ast)^{-1}
\]
for every $w\in\mathcal{W}_{\mathcal{A}}$. We therefore define the dual product with respect to the $\Psi$-pairing by 
\begin{equation}\label{eq:gamma_dual_product_explicit}
    \alpha\gammashuffle\beta
    :=
    (\Psi^{-1})^\ast
    \left(
        \Psi^\ast(\alpha)
        \shuffle
        \Psi^\ast(\beta)
    \right),
    \qquad
    \alpha,\beta\in\mathbb{R}\langle\mathcal{A}\rangle.
\end{equation}
Indeed, for every $\mathbf{a}\in T((\mathbb{R}^d))$, we then have
\[
    \langle \alpha\gammashuffle\beta,\mathbf{a}\rangle_{\Psi}
    =
    \left\langle \Psi^\ast(\alpha\gammashuffle\beta),\mathbf{a}\right\rangle
    =
    \left\langle \Psi^\ast(\alpha)\shuffle\Psi^\ast(\beta),\mathbf{a}\right\rangle
    =
    \left\langle \Psi^\ast(\alpha)\otimes\Psi^\ast(\beta),\Delta_{\shuffle}\mathbf{a}\right\rangle
    =
    \left\langle \alpha\otimes\beta,\Delta_{\shuffle}\mathbf{a}\right\rangle_{\Psi\otimes\Psi}.
\]
In particular this implied the character equation \eqref{eq:char_gamma} with respect to the $\Psi$-pairing.

The product $\gammashuffle$ admits the following intrinsic
recursive description. It separates the shuffle of the terminal
$\mathcal{A}^{\prime\prime}$-blocks from the branching at
$\mathcal{A}^{\prime}$-letters.

\begin{proposition}\label{prop:gamma_shuffle_recursion}
For $w,v\in\mathcal{W}_{\mathcal{A}}$,
$r,q\in\mathcal{W}_{\mathcal{A}^{\prime\prime}}$, and
$\blue{i},\blue{j}\in\mathcal{A}^{\prime}$,
\begin{equation}\label{eq:gamma_shuffle_recursion}
\left\{\begin{aligned}
r\gammashuffle q
&=
r\shuffle q,
\\
(w\blue{i}r)\gammashuffle q
&=
w\blue{i}(r\shuffle q),
\\
r\gammashuffle(v\blue{j}q)
&=
v\blue{j}(r\shuffle q),
\\
(w\blue{i}r)\gammashuffle(v\blue{j}q)
&=
\bigl((w\blue{i})\gammashuffle v\bigr)
\blue{j}(r\shuffle q)
+
\bigl(w\gammashuffle(v\blue{j})\bigr)
\blue{i}(r\shuffle q).
\end{aligned}\right.
\end{equation}
These identities uniquely determine $\gammashuffle$.
\end{proposition}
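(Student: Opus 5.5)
The plan is to verify each identity in \eqref{eq:gamma_shuffle_recursion} by testing both sides against signatures of smooth paths. Uniqueness will come from the fact that the recursion reduces total length.

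\emph{Reduction to signatures.} Since $\Psi^\ast$ is a graded bijection and, by \cite[Lemma 5]{diehl2019invariants}, $\mathrm{span}\{\pi_{\le N}\Sig{Y}\}=T^{\le N}(\mathbb{R}^d)$, two homogeneous elements $\alpha,\beta$ of the same degree coincide as soon as $\langle\alpha,\Sig{Y}\rangle_\Psi=\langle\beta,\Sig{Y}\rangle_\Psi$ for all $Y\in C^{1-var}$ and all intervals $[s,t]$. By the character identity \eqref{eq:char_gamma}, the left-hand sides evaluate to products $\langle w\blue{i}r,\Sig{Y}\rangle_\Psi\langle v\blue{j}q,\Sig{Y}\rangle_\Psi$. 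I therefore only have to compute the $\Psi$-coordinates of the right-hand sides using Proposition~\ref{prop:psi_inv}.

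\emph{The key structural lemma.} From \eqref{eq:Psi_inverse_explicit} and bilinearity, for any $\alpha\in\mathbb{R}\langle\mathcal{A}\rangle$, $\blue{j}\in\mathcal{A}'$ and $r\in\mathcal{W}_{\mathcal{A}''}$, we have $\Psi^\ast(\alpha\blue{j}r)=(\Psi^\ast(\alpha)\blue{j})\shuffle r$. Consequently
\[
\langle \alpha\blue{j}r,\Sig{Y}_{s,t}\rangle_\Psi=\langle r,\Sig{X}_{s,t}\rangle\int_s^t\langle\alpha,\Sig{Y}_{s,u}\rangle_\Psi\,\dd A^{\blue{j}}_u.
\]
The same computation appears in the proof of Proposition~\ref{prop:psi_inv}. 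Linearity in $r$ extends this to $r$ replaced by a combination $r\shuffle q$ of $\mathcal{A}''$-words, with factor $\langle r,\Sig{X}\rangle\langle q,\Sig{X}\rangle$ by the shuffle identity.

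\emph{Checking the four cases.}
\begin{itemize}
\item[(i)] Here $r\gammashuffle q=r\shuffle q$, because $\Psi^\ast$ is the identity on $\mathcal{A}''$-words. Alternatively, the shuffle identity for $\Sig{X}$ gives it directly.
\item[(ii), (iii)] By the lemma, both sides equal $\langle r,\Sig{X}\rangle\langle q,\Sig{X}\rangle\int\langle w,\Sig{Y}_{s,u}\rangle_\Psi\,\dd A^{\blue{i}}_u$, and symmetrically for (iii).
\item[(iv)] Set $F_t:=\int_s^t\langle w,\Sig{Y}_{s,u}\rangle_\Psi\,\dd A^{\blue{i}}_u=\langle w\blue{i},\Sig{Y}_{s,t}\rangle_\Psi$ and $G_t:=\langle v\blue{j},\Sig{Y}_{s,t}\rangle_\Psi$. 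Both have bounded variation, so integration by parts gives
\[
F_tG_t=\int_s^t F_u\,\langle v,\Sig{Y}_{s,u}\rangle_\Psi\,\dd A^{\blue{j}}_u+\int_s^t G_u\,\langle w,\Sig{Y}_{s,u}\rangle_\Psi\,\dd A^{\blue{i}}_u .
\]
In the first integral I apply \eqref{eq:char_gamma} to write $F_u\langle v,\cdot\rangle_\Psi=\langle (w\blue{i})\gammashuffle v,\Sig{Y}_{s,u}\rangle_\Psi$, and similarly in the second. Multiplying by $\langle r\shuffle q,\Sig{X}_{s,t}\rangle$ and applying the lemma with $\alpha=(w\blue{i})\gammashuffle v$, resp.\ $\alpha=w\gammashuffle(v\blue{j})$, yields exactly the $\Psi$-coordinate of the right-hand side.
\end{itemize}

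\emph{Uniqueness and the main obstacle.} Every pair of words falls into exactly one of the four cases, because each word is uniquely of the form $r$ or $w\blue{i}r$ (split off the last $\mathcal{A}'$-letter). On the right-hand sides, $\gammashuffle$ is applied only to pairs of strictly smaller total length. Induction on $|w|+|v|$ together with bilinearity therefore determines $\gammashuffle$ uniquely.

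I expect the main care to lie in the lemma $\Psi^\ast(\alpha\blue{j}r)=(\Psi^\ast(\alpha)\blue{j})\shuffle r$ for general $\alpha$. For words $\alpha$ whose last block is a nonempty $\mathcal{A}''$-word, one must note that appending $\blue{j}r$ gives $\decomp{\mathcal{A}'}$ with prefix exactly $\alpha$. This holds because the decomposition splits at the final $\mathcal{A}'$-letter, so the recursion \eqref{eq:Psi_inverse_explicit} applies verbatim.
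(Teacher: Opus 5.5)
Your proposal is correct and follows essentially the paper's proof. Both arguments test both sides against $\Psi$-coordinates of signatures via the character identity \eqref{eq:char_gamma}. Both use the block formula $\langle \alpha\blue{j}r,\Sig{Y}_{s,t}\rangle_\Psi=\langle r,\Sig{X}_{s,t}\rangle\int_s^t\langle\alpha,\Sig{Y}_{s,u}\rangle_\Psi\,\dd A^{\blue{j}}_u$, integrate by parts in the mixed case, conclude via the spanning property of truncated signatures, and obtain uniqueness by a decreasing induction.

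The differences are cosmetic. You apply the character identity directly inside the integrands, so the existence part needs no induction hypothesis; this is legitimate because $\gammashuffle$ is already defined explicitly by \eqref{eq:gamma_dual_product_explicit}. You also induct on total length rather than on the number of $\mathcal{A}'$-letters.
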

\begin{proof}
By \eqref{def:gamma_signature},
\[
    \left\langle
        w\blue{i}r,
        \Sig{Y}_{s,t}
    \right\rangle_\Psi
    =
    \left\langle r,\Sig{X}_{s,t}\right\rangle
    \int_s^t
    \left\langle w,\Sig{Y}_{s,u}\right\rangle_\Psi
    \,\dd A_u^{\blue{i}}.
\]
The first three identities follow directly from this relation and the shuffle
identity for $\Sig{X}$. We prove the last identity by induction on the total
number of $\mathcal{A}^{\prime}$-letters in the two factors.

Applying integration by parts to the two integrals and using the shuffle
identity for $\Sig{X}$ gives
\begin{align*}
\left\langle
    w\blue{i}r,
    \Sig{Y}_{s,t}
\right\rangle_\Psi
\left\langle
    v\blue{j}q,
    \Sig{Y}_{s,t}
\right\rangle_\Psi
&=
\left\langle
    r\shuffle q,
    \Sig{X}_{s,t}
\right\rangle
\int_s^t
\left\langle
    w\blue{i},
    \Sig{Y}_{s,u}
\right\rangle_\Psi
\left\langle
    v,
    \Sig{Y}_{s,u}
\right\rangle_\Psi
\,\dd A_u^{\blue{j}}
\\
&\quad+
\left\langle
    r\shuffle q,
    \Sig{X}_{s,t}
\right\rangle
\int_s^t
\left\langle
    w,
    \Sig{Y}_{s,u}
\right\rangle_\Psi
\left\langle
    v\blue{j},
    \Sig{Y}_{s,u}
\right\rangle_\Psi
\,\dd A_u^{\blue{i}}.
\end{align*}
By the induction hypothesis and \eqref{def:gamma_signature}, the right-hand
side equals
\[
    \left\langle
        \bigl(
            (w\blue{i}\gammashuffle v)\blue{j}
            +
            (w\gammashuffle v\blue{j})\blue{i}
        \bigr)(r\shuffle q),
        \Sig{Y}_{s,t}
    \right\rangle_\Psi.
\]
On the other hand, the character identity \eqref{eq:char_gamma} gives
\[
    \left\langle
        w\blue{i}r\gammashuffle v\blue{j}q,
        \Sig{Y}_{s,t}
    \right\rangle_\Psi
    =
    \left\langle
        w\blue{i}r,
        \Sig{Y}_{s,t}
    \right\rangle_\Psi
    \left\langle
        v\blue{j}q,
        \Sig{Y}_{s,t}
    \right\rangle_\Psi.
\]
Since $\Psi$ is an automorphism and truncated signatures span every truncated
tensor algebra, the truncations of $\Psi\circ\Sig{\cdot}$ have the same spanning
property. Hence the preceding identities yield
\eqref{eq:gamma_shuffle_recursion}.

The first three identities determine the cases in which one factor contains no
$\mathcal{A}^{\prime}$-letter, while the last identity decreases the total
number of $\mathcal{A}^{\prime}$-letters. Hence the recursion is unique.
\end{proof}

The same recursion also gives an explicit construction of the rows of
$\Psi^{-1}$. In contrast to the triangular recursion in \Cref{prop:Psi}, the
following formula reduces directly to rows with fewer
$\mathcal{A}^{\prime}$-letters.

\begin{corollary}\label{cor:Psi_gamma_recursion}
The rows $(\Psi^{-1}_w)_{w\in\mathcal{W}_{\mathcal{A}}}$ from
\Cref{prop:Psi} satisfy
\begin{equation}\label{eq:Psi_gamma_recursion}
\Psi^{-1}_r
=
r,
\qquad
\Psi^{-1}_{w\blue{i}\orange{i_1}\cdots\orange{i_m}}
=
\sum_{k=0}^{m}
(-1)^k
\bigl(
    \Psi^{-1}_w
    \gammashuffle
    \orange{i_k}\cdots\orange{i_1}
\bigr)
\blue{i}
\orange{i_{k+1}}\cdots\orange{i_m},
\end{equation}
for $w\in\mathcal{W}_{\mathcal{A}}$,
$\blue{i}\in\mathcal{A}^{\prime}$, and
$\orange{i_1},\ldots,\orange{i_m}\in\mathcal{A}^{\prime\prime}$, where
empty strings of letters are omitted.
\end{corollary}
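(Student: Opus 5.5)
The plan is to verify the identity by pairing both sides with signatures in $\Psi$-coordinates, and then to conclude at the level of words via the spanning argument already used in \Cref{prop:main_result} and \Cref{prop:gamma_shuffle_recursion}. Since $\langle \Psi^{-1}_u,\Sig{Y}\rangle_\Psi=\langle u,\Sig{Y}\rangle$ by \Cref{prop:Psi}, it suffices to show that for every $Y\in C^{1-var}$ the right-hand side $R$ of \eqref{eq:Psi_gamma_recursion} satisfies
\[
\langle R,\Sig{Y}_{s,t}\rangle_\Psi=\langle w\blue{i}\orange{i_1}\cdots\orange{i_m},\Sig{Y}_{s,t}\rangle .
\]
Indeed, $(\Psi^{-1})^\ast$ is injective, the truncations of $\Psi\circ\Sig{\cdot}$ span each truncated tensor algebra, and both sides are graded. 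The case $\Psi^{-1}_r=r$ is part of \Cref{prop:Psi}.

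For the evaluation, I will use \eqref{def:gamma_signature} in the form already quoted in the proof of \Cref{prop:gamma_shuffle_recursion}: for any $\alpha$ and any $\mathcal{A}''$-word $q$,
\[
\langle \alpha\blue{i}q,\Sig{Y}_{s,t}\rangle_\Psi=\langle q,\Sig{X}_{s,t}\rangle\int_s^t\langle\alpha,\Sig{Y}_{s,u}\rangle_\Psi\,\dd A^{\blue{i}}_u .
\]
This extends linearly in $\alpha$. Combining it with the character identity \eqref{eq:char_gamma} and $\langle\Psi^{-1}_w,\cdot\rangle_\Psi=\langle w,\cdot\rangle$, the $k$-th summand of $R$ evaluates to
\[
(-1)^k\langle \orange{i_{k+1}}\cdots\orange{i_m},\Sig{X}_{s,t}\rangle\int_s^t\langle w,\Sig{Y}_{s,u}\rangle\,\langle\orange{i_k}\cdots\orange{i_1},\Sig{X}_{s,u}\rangle\,\dd A^{\blue{i}}_u .
\]
Here $\orange{i_k}\cdots\orange{i_1}$ is a pure $\mathcal{A}''$-word, so its $\Psi$-coordinate is its ordinary coordinate.

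It remains to identify the alternating sum with the ordinary coordinate $\langle w\blue{i}\orange{i_1}\cdots\orange{i_m},\Sig{Y}_{s,t}\rangle=\int_{s<u<r_1<\cdots<r_m<t}\langle w,\Sig{Y}_{s,u}\rangle\,\dd A^{\blue{i}}_u\,\dd X^{\orange{i_1}}_{r_1}\cdots \dd X^{\orange{i_m}}_{r_m}$. For fixed $u$, I will write the inner integral as $\langle \orange{i_1}\cdots\orange{i_m},\Sig{X}_{u,t}\rangle$ and expand it with Chen's identity and the antipode $\Sig{X}_{u,t}=\Sig{X}_{s,u}^{-1}\otimes\Sig{X}_{s,t}$:
\[
\langle \orange{i_1}\cdots\orange{i_m},\Sig{X}_{u,t}\rangle=\sum_{k=0}^m (-1)^k\langle \orange{i_k}\cdots\orange{i_1},\Sig{X}_{s,u}\rangle\langle\orange{i_{k+1}}\cdots\orange{i_m},\Sig{X}_{s,t}\rangle .
\]
This uses $S(e_{\orange{i_1}\cdots\orange{i_k}})=(-1)^k e_{\orange{i_k}\cdots\orange{i_1}}$ and the fact that the signature inverse equals the antipode on group-like elements. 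Substituting this expansion and integrating against $\langle w,\Sig{Y}_{s,u}\rangle\,\dd A^{\blue{i}}_u$ (Fubini) gives exactly the sum obtained above, term by term.

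The main obstacle is bookkeeping rather than analysis: one must check that the identity $\Sig{X}^{-1}=S(\Sig{X})$ is applied with the correct word order. Alternatively, one can avoid the Hopf language and prove the displayed expansion by induction on $m$, using $\langle v\orange{j},\Sig{X}_{u,t}\rangle=\int_u^t\langle v,\Sig{X}_{u,r}\rangle\dd X^{\orange{j}}_r$ together with Chen. The final passage from the pathwise identity to equality in $\mathbb{R}\langle\mathcal{A}\rangle$ is the same spanning argument as before.
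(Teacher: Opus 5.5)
Your proof is correct. It takes the path-side route, whereas the paper's proof stays entirely in the word algebra. The paper never evaluates on paths. It applies $\Psi^\ast$ to the right-hand side and uses three facts: $\Psi^\ast$ intertwines $\gammashuffle$ with $\shuffle$, it fixes $\mathcal{A}''$-words, and $\Psi^\ast(\alpha\blue{i}r)=(\Psi^\ast(\alpha)\blue{i})\shuffle r$. This reduces the claim to the word identity
\[
\sum_{k=0}^{m}(-1)^k\bigl((w\shuffle\orange{i_k}\cdots\orange{i_1})\blue{i}\bigr)\shuffle\orange{i_{k+1}}\cdots\orange{i_m}=w\blue{i}\orange{i_1}\cdots\orange{i_m},
\]
and the proof concludes by injectivity of $\Psi^\ast$.

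Your Fubini--Chen--antipode computation is exactly this identity paired against $\Sig{Y}_{s,t}$: the $k$-th summand on the left pairs to your $k$-th term. Your final spanning argument plays the role of injectivity of $\Psi^\ast$.

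What each route buys:
\begin{itemize}
\item The paper's route is a finite, purely algebraic check in the shuffle algebra. It needs neither Stieltjes calculus nor the Diehl--Reizenstein spanning lemma.
\item Your route explains where the alternating signs and reversed words come from: they are the coordinates of $\Sig{X}_{s,u}^{-1}=S(\Sig{X}_{s,u})$ in Chen's relation. In effect it proves the word identity that the paper only attributes to ``repeated application of the shuffle recursion''. A natural induction on $m$ for that identity itself reduces to the antipode relation $\sum_{k=0}^m(-1)^k\,\orange{i_k}\cdots\orange{i_1}\shuffle\orange{i_{k+1}}\cdots\orange{i_m}=0$ for $m\geq 1$.
\end{itemize}
All the analytic inputs you use are already invoked elsewhere in the paper, so nothing new is assumed.

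One small correction: the fact you need at the end is that truncations of $\Psi\circ\Sig{\cdot}$ span, or equivalently that $\Psi^\ast$ is injective. Injectivity of $(\Psi^{-1})^\ast$ is not the relevant fact, though the slip is harmless since both maps are invertible.
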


\begin{proof}
By \eqref{eq:gamma_dual_product_explicit},
$\Psi^\ast$ intertwines $\gammashuffle$ and $\shuffle$ and fixes
every word in $\mathcal{W}_{\mathcal{A}^{\prime\prime}}$. Moreover,
\eqref{eq:Psi_inverse_explicit} gives
\[
\Psi^\ast(\alpha\blue{i}r)
=
\bigl(
    \Psi^\ast(\alpha)\blue{i}
\bigr)
\shuffle r.
\]
Applying $\Psi^\ast$ to the right-hand side of
\eqref{eq:Psi_gamma_recursion} yields
\[
\sum_{k=0}^{m}
(-1)^k
\bigl(
    (w\shuffle
    \orange{i_k}\cdots\orange{i_1})
    \blue{i}
\bigr)
\shuffle
\orange{i_{k+1}}\cdots\orange{i_m}
=
w\blue{i}\orange{i_1}\cdots\orange{i_m},
\]
where the last identity follows by repeated application of the shuffle
recursion. Since
\[
    \Psi^\ast
    \bigl(
        \Psi^{-1}_{w\blue{i}\orange{i_1}\cdots\orange{i_m}}
    \bigr)
    =
    w\blue{i}\orange{i_1}\cdots\orange{i_m},
\]
injectivity proves \eqref{eq:Psi_gamma_recursion}.
\end{proof}

\begin{remark}\label{rem:ODE}
An alternative characterization of the signature \eqref{def:signature}
is through the tensor-algebra-valued linear differential equation
$\dd S_t=S_t\otimes\dd Y_t$ with $S_0=1$. For
$\mathbf{a},\mathbf{b}\in T((\mathbb{R}^d))$, we define the concatenation product in
$\Psi$-coordinates by transporting
\begin{equation}\label{eq:gamma_tensor_product}
    \mathbf{a}\odot\mathbf{b}
    :=
    \Psi
    \left(
        \Psi^{-1}(\mathbf{a})
        \otimes
        \Psi^{-1}(\mathbf{b})
    \right).
\end{equation}
By construction, $\Psi$ is an algebra isomorphism from
$\bigl(T((\mathbb{R}^d)),\otimes\bigr)$ to
$\bigl(T((\mathbb{R}^d)),\odot\bigr)$. Setting
$\Gamma_t=\Psi(\Sig{Y}_{0,t})$, we therefore have
\[
\Psi\Bigl(1+\int_0^t S_u\otimes\dd Y_u\Bigr)
=
1+\int_0^t\Psi(S_u\otimes\dd Y_u)
=
1+\int_0^t
\Psi\bigl(\Psi^{-1}(\Gamma_u)\otimes\Psi^{-1}(\dd Y_u)\bigr)
=
1+\int_0^t\Gamma_u\odot\dd Y_u,
\]
where in the last identity we used \eqref{eq:gamma_tensor_product} and
$\Psi^{-1}|_{T^1(\mathbb{R}^d)}=\mathrm{Id}$. Consequently, the
$\Psi$-coordinates are equivalently characterized as the unique solution of
\[
\Gamma_0=1,
\qquad
\dd\Gamma_t=\Gamma_t\odot\dd Y_t,
\qquad
0<t\leq T.
\]

The coordinate transformation also transports the corresponding Lie
structure. Writing
\[
    G_{\Psi}
    :=
    \Psi
    \left(
        G((\mathbb{R}^d))
    \right),
    \qquad
    \mathfrak{g}_{\Psi}
    :=
    \Psi
    \left(
        \operatorname{Lie}((\mathbb{R}^d))
    \right),
\]
the product $\odot$ makes $G_{\Psi}$ a group, and $\Psi^{-1}$ restricts to
a Lie algebra isomorphism
\[
    \Psi^{-1}:
    \left(
        \mathfrak{g}_{\Psi},
        [\cdot,\cdot]_{\odot}
    \right)
    \longrightarrow
    \left(
        \operatorname{Lie}((\mathbb{R}^d)),
        [\cdot,\cdot]_{\otimes}
    \right),
\]
where $\operatorname{Lie}((\mathbb{R}^d))$ denotes the space of Lie
series and the commutator brackets are given by
\(
    [\mathbf{a},\mathbf{b}]_{\star}
    :=
    \mathbf{a}\star\mathbf{b}
    -
    \mathbf{b}\star\mathbf{a}
\).
In particular,
\[
    [\mathbf{a},\mathbf{b}]_{\odot}
    =
    \Psi
    \left(
        [\Psi^{-1}(\mathbf{a}),\Psi^{-1}(\mathbf{b})]_{\otimes}
    \right).
\]
Consequently, $\dd\Gamma_t=\Gamma_t\odot\dd Y_t$ is a left-invariant
equation on $G_{\Psi}$.
\end{remark}

\subsection{Partial symmetrization}
\label{sec:partial_sym}
Denote, as in \eqref{eq:path_components}, by $(A,X)$ the split of
$Y\in C^{1-var}([s,t],\mathbb{R}^{d})$ into its
$\mathcal{A}^{\prime}$ and $\mathcal{A}^{\prime\prime}$ coordinates,
respectively. 
In the case $|\mathcal{A}^{\prime\prime}| = 1$, the $\Psi$-coordinates
are particularly simple, as the $\Sig{X}$ terms in \eqref{def:gamma_signature}
simply turn into normalized moments, and we obtain
\begin{equation}\label{eq:gamma_signature_sym_univariate}
        \langle w, \Sig{Y} \rangle_{\Psi} = 
        \frac{1}{|w_k|!}(X_{s,t})^{|w_k|}\int_{\Delta^{k}_{s,t}}\prod_{j=1}^{k}  
        \frac{1}{|w_{j-1}|!}(X_{s,t_j})^{|w_{j-1}|} \dd A^{\blue{i_{j}}}_{t_{j}}.
\end{equation}
for $w\in\mathcal{W}_{\mathcal{A}}$ with $
    \decomp{\mathcal{A}^{\prime}}(w)
    =
    \left(
        (w_0,\ldots,w_k),
        \blue{i_1}\cdots\blue{i_k}
    \right)
    $.
This is particularly convenient when $A$ is deterministic and we aim to compute expectations (cf. \Cref{sec:expected_signature}).
For the general case $|\mathcal{A}^{\prime\prime}|>1$, it is therefore tempting to symmetrize in \eqref{def:gamma_signature} over all coordinates in $\mathcal{A}^{\prime\prime}$, thus obtaining multivariate monomials instead of $\Sig{X}$-terms.

\Cref{thm:partial_symmetrized_gamma} below shows that the algebraic structure resulting from this partial symmetrization can be expressed very explicitly when the signature is written in $\Psi$-coordinates. Indeed, a general Hopf-algebraic argument implies that the partially symmetrized signature satisfies a reduced ``shuffle''-type relation. However, when expressed in standard tensor coordinates, this relation becomes algebraically cumbersome. In $\Psi$-coordinates, the resulting dual product is a simple shuffle over $\mathcal{A}^{\prime\prime}$-blocks.
Furthermore, when the augmenting path $A$ has at least one strictly monotone component, we retain injectivity as a map of $X$, and universal approximation by linear functionals follows directly.

To this end, recall from \Cref{sec:prelim_quo} the quotient maps
\begin{align*}
    [\cdot]
    =
    [\cdot]_{\mathcal{A}^{\prime\prime}}&:
    \mathbb{R}\langle\mathcal{A}\rangle
    \rightarrow
    \mathbb{R}_{\mathcal{A}^{\prime\prime}}\langle\mathcal{A}\rangle,
    \\
    \symmetrizer
    =
    \symmetrizer_{\ \mathcal{A}^{\prime\prime}}&:
    T((\mathbb{R}^d))
    \rightarrow
    T_{\mathcal{A}^{\prime\prime}}((\mathbb{R}^d)),
\end{align*}
obtained by quotienting by the commutators of letters in
$\mathcal{A}^{\prime\prime}$ and taking the corresponding quotient on the tensor
side. For $w=\orange{i_1}\cdots\orange{i_m}\in
\mathcal{W}_{\mathcal{A}^{\prime\prime}}$, we introduce the notation
\begin{equation*}
    X_{s,t}^{[w]}
    :=
    X_{s,t}^{\orange{i_1}}\cdots X_{s,t}^{\orange{i_m}}
    =
    \prod_{\orange{i}\in\mathcal{A}^{\prime\prime}}
    \left(X_{s,t}^{\orange{i}}\right)^{\alpha_{\orange{i}}},
    \qquad
    [w]!
    :=
    \prod_{\orange{i}\in\mathcal{A}^{\prime\prime}}
    \alpha_{\orange{i}}!,
\end{equation*}
where
$\alpha_{\orange{i}}:=\#\{r\in\{1,\ldots,m\}:\orange{i_r}=\orange{i}\}$.
Both definitions depend only on the quotient class $[w]$. For
$v,w\in\mathcal{W}_{\mathcal{A}^{\prime\prime}}$, we further use the
additive notation $[v]+[w]:=[vw]$, which corresponds to addition of the
associated multiplicity multi-indices. In particular,
$$X_{s,t}^{[w]}X_{s,t}^{[v]}=X_{s,t}^{[w]+[v]}.$$ Further, we write
\begin{equation*}
    \Sigsym{\cdot}
    =
    \widehat{\Sig{\cdot}}.
\end{equation*}
For $w\in\mathcal{W}_{\mathcal{A}^{\prime\prime}}$, the shuffle identity yields
\begin{equation}\label{eq:quotient_signature_moment}
    \big\langle [w],\Sigsym{X}_{s,t}\big\rangle
    =
    \frac{1}{[w]!}X_{s,t}^{[w]}.
\end{equation}
While \eqref{eq:quotient_signature_moment} gives a simple expression for the partially symmetrized signature of $X$, it is not immediate how to obtain an analogous representation for the partially symmetrized full signature of $Y$ in terms of its components $(A,X)$. In contrast, the structure of the $\Psi$-coordinates directly yields a simple expression after applying the quotient.
The following theorem expresses the partially symmetrized
$\Psi$-coordinates and their algebraic properties explicitly.

\begin{theorem}
\label{thm:partial_symmetrized_gamma}
Let $Y=(A,X)\in C^{1-var}([s,t],\mathbb{R}^d)$ as above. For any
$w\in\mathcal{W}_{\mathcal{A}}$ with
\[
    \decomp{\mathcal{A}^{\prime}}(w)
    =
    \left(
        (w_0,\ldots,w_k),
        \blue{i_1}\cdots\blue{i_k}
    \right),
    \qquad
    \alpha_j:=[w_j],
    \quad
    j=0,\ldots,k,
\]
it holds that
\begin{equation}\label{def:gamma_signature_sym}
    \big\langle [w],\Sigsym{Y}\big\rangle_\Psi
    =
    \frac{1}{\alpha_k!}X_{s,t}^{\alpha_k}
    \int_{\Delta^{k}_{s,t}}
    \prod_{j=1}^{k}
    \frac{1}{\alpha_{j-1}!}X_{s,t_j}^{\alpha_{j-1}}
    \dd A^{\blue{i_j}}_{t_j}.
\end{equation}
Moreover, for all $w,v\in\mathcal{W}_{\mathcal{A}}$,
\begin{equation}\label{eq:partial_symmetrized_gamma_shuffle}
    \big\langle [w],\Sigsym{Y}\big\rangle_\Psi
    \big\langle [v],\Sigsym{Y}\big\rangle_\Psi
    =
    \big\langle
        [w]\gammashufflesym[v],
        \Sigsym{Y}
    \big\rangle_\Psi,
\end{equation}
where $\gammashufflesym$ is the bilinear product recursively determined, for
$w,v\in\mathcal{W}_{\mathcal{A}}$, terminal block classes
$\alpha=[r]$ and $\beta=[q]$ with
$r,q\in\mathcal{W}_{\mathcal{A}^{\prime\prime}}$, and
$\blue{i},\blue{j}\in\mathcal{A}^{\prime}$, by
\begin{equation}\label{eq:partial_symmetrized_gamma_recursion}
\left\{
\begin{aligned}
\alpha\gammashufflesym\beta
&:=
\frac{(\alpha+\beta)!}{\alpha!\beta!}
(\alpha+\beta)\Big.,
\\
\bigl([w]\blue{i}\alpha\bigr)\gammashufflesym\beta
&:=
[w]\blue{i}
\bigl(
    \alpha\gammashufflesym\beta
\bigr)\Big.,
\\
\alpha\gammashufflesym\bigl([v]\blue{j}\beta\bigr)
&:=
[v]\blue{j}
\bigl(
    \alpha\gammashufflesym\beta
\bigr)\Big.,
\\
\bigl([w]\blue{i}\alpha\bigr)
\gammashufflesym
\bigl([v]\blue{j}\beta\bigr)
&:=
\bigl(
    [w\blue{i}]\gammashufflesym[v]
\bigr)
\blue{j}
\bigl(
    \alpha\gammashufflesym\beta
\bigr)
+
\bigl(
    [w]\gammashufflesym[v\blue{j}]
\bigr)
\blue{i}
\bigl(
    \alpha\gammashufflesym\beta
\bigr)\Big..
\end{aligned}
\right.
\end{equation}
Suppose now $\widetilde{A} \in C^{1-var}([s,t],\R^m)$ satisfies Assumption~\ref{ass:mambo} with respect to some fixed sequence $\mathbf{w}=(w_n)_{n\in \mathbb{N}}$, and recall its fibre $[\widetilde{A}]_\mathbf{w}$ introduced in \eqref{eq:fibre}. Then the map
\begin{equation*}
    \Psi \circ \wh{\mathrm{Sig}}:
    \left\{
        Y=(A,X)\in C^{1-var}([s,t],\mathbb{R}^d)
        \;\vert\; Y_s = Y_0,\, A \in [\widetilde{A}]_\mathbf{w}
    \right\}
    \,\to\,
    T_{\mathcal{A}^{\prime\prime}}((\mathbb{R}^d))
\end{equation*}
is injective for some fixed initial condition $X_0$.
\end{theorem}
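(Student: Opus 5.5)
The plan is to first make sense of $\langle[w],\Sigsym{Y}\rangle_\Psi$ by showing that $\Psi$ descends to the quotient. Concretely, I would define $\widehat{\Psi}$ through the rows $\widehat{\Psi}^\ast([w]):=[\Psi_w]$. To see that this is well defined, I would show that $\Psi^\ast$ maps the symmetrized sums $\widehat{[w]}^\ast=\sum_{[v]=[w]}v$ into the image of $\symmetrizer^\ast$. The route to this is to evaluate $\langle \Psi^\ast(\widehat{[w]}^\ast),\Sig{Y}\rangle$ using Proposition~\ref{prop:psi_inv}.

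The class of $w$ consists exactly of the words $v=v_0\blue{i_1}v_1\cdots\blue{i_k}v_k$ with $[v_j]=\alpha_j$ for every $j$. By \eqref{def:gamma_signature}, $\langle\Psi_v,\Sig{Y}\rangle$ is multilinear in the factors $\langle v_j,\Sig{X}_{s,t_j}\rangle$. Summing over all $v$ in the class therefore factorizes blockwise, and each block sum equals $\langle\alpha_j,\Sigsym{X}_{s,t_j}\rangle=X^{\alpha_j}_{s,t_j}/\alpha_j!$ by \eqref{eq:quotient_pairing} and \eqref{eq:quotient_signature_moment}. This yields \eqref{def:gamma_signature_sym}. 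The resulting expression depends only on $[w]$. Combined with the spanning property of truncated signatures (\cite[Lemma 5]{diehl2019invariants}, used as in the proof of Theorem~\ref{prop:main_result}), this gives that $\symmetrizer\circ\Psi$ vanishes on $\ker\symmetrizer$, so $\widehat{\Psi}$ is well defined, graded and invertible.

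For the product identity \eqref{eq:partial_symmetrized_gamma_shuffle}, I would mimic the proof of Proposition~\ref{prop:gamma_shuffle_recursion}, inducting on the total number of $\mathcal{A}^\prime$-letters. The base cases come from
\[
\frac{X^\alpha X^\beta}{\alpha!\,\beta!}=\frac{(\alpha+\beta)!}{\alpha!\,\beta!}\cdot\frac{X^{\alpha+\beta}}{(\alpha+\beta)!}.
\]
The mixed case follows from integration by parts of the two outer $\dd A^{\blue{i}}$, $\dd A^{\blue{j}}$ integrals, using the representation
\[
\langle[w]\blue{i}\alpha,\Sigsym{Y}\rangle_\Psi=\frac{X^\alpha_{s,t}}{\alpha!}\int_s^t\langle[w],\Sigsym{Y}_{s,u}\rangle_\Psi\,\dd A^{\blue{i}}_u,
\]
which is immediate from \eqref{def:gamma_signature_sym}. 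Uniqueness of $\gammashufflesym$ follows, as before, because each recursion step strictly reduces the number of $\mathcal{A}^\prime$-letters.

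For injectivity, suppose $Y=(A,X)$ and $Y'=(A',X')$ have equal images, with $A,A'\in[\widetilde A]_{\mathbf w}$ and the same initial value. Fix $t_0\in(s,t]$ and restrict to $[s,t_0]$; this is legitimate since the prefix coordinates of $\Sigsym{Y}_{s,t_0}$ are themselves read off, by Chen's relation, from $\Sigsym{Y}_{s,\cdot}$. I would test against words $w=\orange{b}\,w_n$ with $\orange{b}\in\mathcal{A}^{\prime\prime}$ and $w_n=\blue{j_1}\cdots\blue{j_r}$ from Assumption~\ref{ass:mambo}. By \eqref{def:gamma_signature_sym},
\[
\langle[w],\Sigsym{Y}_{s,t_0}\rangle_\Psi=\int_s^{t_0}X^{\orange b}_{s,u}\,\langle \blue{j_2}\cdots\blue{j_r},\Sig{A}_{u,t_0}\rangle\,\dd A^{\blue{j_1}}_u=-\int_s^{t_0}X^{\orange b}_{s,u}\,\partial_u\langle w_n,\Sig{A}_{u,t_0}\rangle\,\dd u .
\]
Since $A$ and $A'$ share the same $w_n$-coordinates on all subintervals, the kernels coincide. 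Totality in $L^2$ then gives $X^{\orange b}_{s,\cdot}=X'^{\orange b}_{s,\cdot}$ almost everywhere, hence everywhere by continuity, and the fixed initial condition gives $X=X'$.

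The main obstacle I expect is the $\mathcal{A}^\prime$-component. Injectivity in $A$ beyond fibre membership is not provided by the $X$-test words. The plan there is to read it off from the pure $\mathcal{A}^\prime$-words, where $\widehat{\Psi}$ acts as the identity and returns $\Sig{A}$. Uniqueness of $A$ up to tree-like equivalence, which is exact when a strictly monotone component is present, then determines $A$. I would state this step carefully, as the map is to be understood on $X$ once the fibre is fixed.
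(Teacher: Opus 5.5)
Your treatment of the $\mathcal{A}^{\prime}$-component in the injectivity part has a genuine gap. You appeal to uniqueness of $\Sig{A}$ up to tree-like equivalence, which has two problems. First, it needs a strictly monotone component, and Assumption~\ref{ass:mambo} does not supply one. Second, even then it recovers $A$ only up to reparametrization, so a further argument would be needed. Retreating to ``injective in $X$ once the fibre is fixed'' weakens the statement.

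The missing observation is that your own test-word argument works verbatim with a letter $\blue{i}\in\mathcal{A}^{\prime}$ in place of $\orange{b}$. For the pure $\mathcal{A}^{\prime}$-word $\blue{i}\,w_n$ one has $\langle[\blue{i}w_n],\Sigsym{Y}\rangle_\Psi=\langle\blue{i}w_n,\Sig{A}\rangle=-\int_s^t A^{\blue{i}}_{s,u}\,\partial_u\langle w_n,\Sig{\widetilde{A}}_{u,t}\rangle\,\dd u$. Fibre membership makes this kernel the same for both paths, so totality forces the $\blue{i}$-components of the two augmentations to agree. This is exactly how the paper recovers $A$.

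Also drop the restriction to $[s,t_0]$. Chen's relation does not let you read $\Sigsym{Y}_{s,t_0}$ off from $\Sigsym{Y}_{s,t}$, and you do not need it, since $t_0=t$ suffices.

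The first two parts are essentially sound, with one error to fix. Your blockwise summation of $\langle\Psi_v,\Sig{Y}\rangle$ over the class of $w$ is the paper's proof of \eqref{def:gamma_signature_sym}. However, your proposed rows $\widehat{\Psi}^\ast([w]):=[\Psi_w]$ are wrong and not even class-invariant. For distinct $\orange{a},\orange{b}\in\mathcal{A}^{\prime\prime}$, $[\Psi_{\blue{i}\orange{ab}}]$ contains $[\orange{a}\blue{i}\orange{b}]$, whereas $[\Psi_{\blue{i}\orange{ba}}]$ contains $[\orange{b}\blue{i}\orange{a}]$ instead, although $[\blue{i}\orange{ab}]=[\blue{i}\orange{ba}]$. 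What you actually compute is $\Psi^\ast(\widehat{[w]}^\ast)$, and the rows of $\widehat{\Psi}$ are determined by $\symmetrizer^\ast\circ\widehat{\Psi}^\ast=\Psi^\ast\circ\symmetrizer^\ast$.

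Moreover, the fact that $\Psi^\ast$ preserves $\operatorname{im}\symmetrizer^\ast$ does not follow from ``the formula depends only on $[w]$'' plus spanning. That dependence is automatic and says nothing about whether $\Psi^\ast(\widehat{[w]}^\ast)$ annihilates $\ker\symmetrizer$. It needs the algebraic induction of Lemma~\ref{lem:partial_symmetrization_ideal}: sum the $\Psi$-recursion over the class, then use closure of $\operatorname{im}\symmetrizer^\ast$ under $\shuffle$ and under appending $\mathcal{A}^{\prime}$-letters.

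For \eqref{eq:partial_symmetrized_gamma_shuffle}, your direct integration-by-parts induction at the symmetrized level is correct and takes a different route from the paper. The paper proves $\symmetrizer^\ast([w]\gammashufflesym[v])=\widehat{[w]}^\ast\gammashuffle\widehat{[v]}^\ast$ from Proposition~\ref{prop:gamma_shuffle_recursion} and then applies \eqref{eq:char_gamma}. Your route is more self-contained. The paper's route also yields that descent identity, which is reused in Proposition~\ref{prop:quotient_gamma_dual_product}.
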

\begin{proof}
Recall from \Cref{sec:prelim_quo} that the graded adjoint
$\symmetrizer^\ast:
    \mathbb{R}_{\mathcal{A}^{\prime\prime}}
    \langle\mathcal{A}\rangle
    \longrightarrow
    \mathbb{R}\langle\mathcal{A}\rangle$
is given on quotient words by
\[
    \widehat{[w]}^\ast
    :=
    \symmetrizer^\ast([w])
    =
    \sum_{\substack{p\in\mathcal{W}_{\mathcal{A}}\\ [p]=[w]}}
    p,
\]
where the words $p$ satisfying $[p]=[w]$ are obtained by independently
permuting the letters in each of the
$\mathcal{A}^{\prime\prime}$-blocks $w_0,\ldots,w_k$.
By \eqref{eq:quotient_pairing} and the definition of $\wh{\mathrm{Sig}}$,
\begin{equation*}
    \left\langle [w],\Sigsym{Y}\right\rangle_\Psi
    =
    \left\langle
        \widehat{[w]}^\ast,
        \Sig{Y}
    \right\rangle_\Psi.
\end{equation*}
Applying \eqref{eq:quotient_signature_moment} independently to these
blocks gives \eqref{def:gamma_signature_sym}.
We next show that the recursion
\eqref{eq:partial_symmetrized_gamma_recursion} is the partial
symmetrization of the recursion for $\gammashuffle$.
For terminal block
classes $\alpha$ and $\beta$, one has
\begin{equation}\label{eq:partial_symmetrized_orbit_shuffle}
    \widehat{\alpha}^{\ast}
    \shuffle
    \widehat{\beta}^{\ast}
    =
    \frac{(\alpha+\beta)!}{\alpha!\beta!}
    \widehat{\alpha+\beta}^{\ast}.
\end{equation}
Indeed, each representative of $\alpha+\beta$ occurs with multiplicity
$(\alpha+\beta)!/(\alpha!\beta!)$ on the left-hand side. Moreover,
\begin{equation}\label{eq:symmetrizer_block_concatenation}
    \widehat{[w]\blue{i}\alpha}^{\ast}
    =
    \widehat{[w]}^{\ast}
    \blue{i}
    \widehat{\alpha}^{\ast}.
\end{equation}

Using \eqref{eq:partial_symmetrized_orbit_shuffle},
\eqref{eq:symmetrizer_block_concatenation}, and
\Cref{prop:gamma_shuffle_recursion}, induction on the total number of
letters from $\mathcal{A}^{\prime}$ in the two factors gives
\begin{equation}\label{eq:partial_symmetrized_gamma_descent}
    \symmetrizer^\ast
    \bigl(
        [w]\gammashufflesym[v]
    \bigr)
    =
    \widehat{[w]}^{\ast}
    \gammashuffle
    \widehat{[v]}^{\ast}.
\end{equation}
Indeed, the pure-block case is
\eqref{eq:partial_symmetrized_orbit_shuffle}. The one-sided cases follow
from
\begin{align*}
    \widehat{[w]\blue{i}\alpha}^{\ast}
    \gammashuffle
    \widehat{\beta}^{\ast}
    &=
    \widehat{[w]}^{\ast}
    \blue{i}
    \bigl(
        \widehat{\alpha}^{\ast}
        \shuffle
        \widehat{\beta}^{\ast}
    \bigr),
    \\
    \widehat{\alpha}^{\ast}
    \gammashuffle
    \widehat{[v]\blue{j}\beta}^{\ast}
    &=
    \widehat{[v]}^{\ast}
    \blue{j}
    \bigl(
        \widehat{\alpha}^{\ast}
        \shuffle
        \widehat{\beta}^{\ast}
    \bigr),
\end{align*}
while, if both factors contain a letter from
$\mathcal{A}^{\prime}$,
\begin{align*}
    \widehat{[w]\blue{i}\alpha}^{\ast}
    \gammashuffle
    \widehat{[v]\blue{j}\beta}^{\ast}
    &=
    \bigl(
        \widehat{[w\blue{i}]}^{\ast}
        \gammashuffle
        \widehat{[v]}^{\ast}
    \bigr)
    \blue{j}
    \bigl(
        \widehat{\alpha}^{\ast}
        \shuffle
        \widehat{\beta}^{\ast}
    \bigr) +
    \bigl(
        \widehat{[w]}^{\ast}
        \gammashuffle
        \widehat{[v\blue{j}]}^{\ast}
    \bigr)
    \blue{i}
    \bigl(
        \widehat{\alpha}^{\ast}
        \shuffle
        \widehat{\beta}^{\ast}
    \bigr).
\end{align*}
The induction hypothesis and
\eqref{eq:partial_symmetrized_orbit_shuffle} identify these expressions
with the images under $\symmetrizer^\ast$ of the corresponding
right-hand sides in
\eqref{eq:partial_symmetrized_gamma_recursion}. This proves
\eqref{eq:partial_symmetrized_gamma_descent}.
Using the character identity for $\gammashuffle$ in \eqref{eq:char_gamma} we finally show \eqref{eq:partial_symmetrized_gamma_shuffle}as follows
\begin{align*}
    \big\langle [w],\Sigsym{Y}\big\rangle_\Psi
    \big\langle [v],\Sigsym{Y}\big\rangle_\Psi
    &=
    \big\langle
        \widehat{[w]}^{\ast},
        \Sig{Y}
    \big\rangle_\Psi
    \big\langle
        \widehat{[v]}^{\ast},
        \Sig{Y}
    \big\rangle_\Psi
    \\
    &=
    \big\langle
        \widehat{[w]}^{\ast}
        \gammashuffle
        \widehat{[v]}^{\ast},
        \Sig{Y}
    \big\rangle_\Psi
    \\
    &=
    \big\langle
        \symmetrizer^\ast
        \bigl(
            [w]\gammashufflesym[v]
        \bigr),
        \Sig{Y}
    \big\rangle_\Psi
    \\
    &=
    \big\langle
        [w]\gammashufflesym[v],
        \Sigsym{Y}
    \big\rangle_\Psi.
\end{align*}

The proof of the
injectivity statement is analogous to the moment argument for time-extended
signatures in \cite[Lemma~2.6]{cuchiero2023signaturemodels}.

For the injectivity assertion, we first the following: For any integrable $f$ and smooth $g$ we have \begin{equation}\label{eq:cauchy_formula}
    \int_{\Delta^k_{s,t}}f(t_1)\dd g_{t_1}^{i_1}\cdots \dd g_{t_k}^{i_k} = -\int_s^t f(u)\partial_u \langle i_1\cdots i_k, \Sig{g}_{u,t}\rangle \dd u.
    \end{equation}
    Indeed, applying Fubini we can write \begin{align*}
    \int_{\Delta^k_{s,t}}f(t_1)\dd g_{t_1}^{i_1}\cdots \dd g_{t_k}^{i_k} & = \int_{[s,t]^k}1_{\Delta^{k}_{s,t}}(t_1,\dots,t_k)f(t_1)\prod_{l=1}^k\dot{g}^{ik}_{t_l}\dd t_l \\ & = \int_s^tf(t_1)\left \{\int_{t_1}^t\cdots \int_{t_1}^{t_2} \dd g^{i_2}_{t_2}\cdots \dd g_{t_k}^{i_k} \right \} \dot{g}_{t_1}^{i_1}\dd t_1 \\ & = \int_s^t f(u) \langle i_2\cdots i_k,\Sig{g}_{u,t} \rangle \dot{g}_{u}^{i_1}\dd u.
    \end{align*} It follows directly by definition that $\partial_u \langle i_1\cdots i_k,\Sig{g}_{u,t} \rangle = - \langle i_2\cdots i_k,\Sig{g}_{u,t} \rangle \dot{g}_{u}^{i_1}$, which shows formula  \eqref{eq:cauchy_formula}. Now let 
$Y^1=(A^1,X^1)$ and $Y^2=(A^2,X^2)$ with 
$Y^1_s = Y^2_s$ and $A^1,A^2 \in [\widetilde{A}]_\mathbf{w}$. Suppose that
$
    \Sigsym{Y^1}
    =
    \Sigsym{Y^2}.
$
For every $\blue{i}\in\mathcal{A}^{\prime}$ and $w_n =w_n^1\cdots w_n^{j_n}$ from the sequence $\mathbf{w}=(w_n)_{n\in \mathbb{N}}$, we have
\begin{equation*}
    0 =\left\langle
        [\blue{i}\blue{w_n}],
        \Sigsym{Y^1}
    \right\rangle_\Psi
    -
    \left\langle
        [\blue{i}\blue{w_n}],
        \Sigsym{Y^2}
    \right\rangle_\Psi
    =
    \int_s^t
    A_{r_1}^{1,\blue{i}}\dd A^{1,\blue{w^1_n}}_{s,r_1}\cdots \dd A^{1,\blue{w^{j_n}_n}}_{r_{j_n}}-\int_s^t
    A_{r_1}^{2,\blue{i}}\dd A^{2,\blue{w^1_n}}_{s,r_1}\cdots \dd A^{2,\blue{w^{j_n}_n}}_{r_{j_n}}
\end{equation*}
Now applying \eqref{eq:cauchy_formula}, and using that $\Sig{A^1}|_{\mathbf{w}}=\Sig{A^2}|_{\mathbf{w}}$ since $A^1,A^2 \in [\widetilde{A}]_\mathbf{w}$ by assumption, we have \begin{equation}\label{eq:mj_conlcu}
    0 = \int_s^t (A_{s,u}^{1,\blue{i}}-A_{s,u}^{2,\blue{i}}) \partial_u \langle w_n,\Sig{\widetilde{A}}_{u,t}\rangle \dd u, \qquad \forall \blue{i} \in \mathcal{A}', \quad \forall n\in\mathbb{N}.
\end{equation}

Therefore, by Assumption~\ref{ass:mambo}, the function $f(u)=A^{1,\blue{i}}_{s,u}-A^{2,\blue{i}}_{s,u}$ lies in the orthogonal complement of a total familiy of functions in $L^2([s,t],\mathbb{R})$, which implies $f=0$ almost everywhere. Since $A_s^{1}=A_s^2$, this implies $A^1=A^2$ almost everywhere. Exactly the same argument but $[\orange{i}\blue{w_n}]$ with $\orange{i}\in \mathcal{A}''$ leads to \eqref{eq:mj_conlcu} with respect to the integrand $X_{s,u}^{1,\orange{i}}-X_{s,u}^{2,\orange{i}}$.
Thus $Y^1=Y^2$ which finishes the proof. 
\end{proof}

We now relate the quotient pairing and the product
$\gammashufflesym$ introduced above to the standard Hopf-algebraic
structure of the partially symmetrized tensor algebra. To this end, we
first show that the partial symmetrization kernel is a Hopf ideal and is
preserved by the $\Psi$ and its inverse.

On the dual side, partial symmetrization is encoded by the ideal
$\mathcal{J}_{\mathcal{A}^{\prime\prime}}$ from
\eqref{def:partial_commutative_ideal}. On the tensor side, the corresponding
kernel is
\[
    \mathcal{I}_{\mathcal{A}^{\prime\prime}}
    :=
    \ker(\symmetrizer)
    =
    \prod_{n\geq 0}
    \operatorname{span}
    \left\{
        e_{u\orange{i}\orange{j}v}
        -
        e_{u\orange{j}\orange{i}v}
        \;:\;
        u,v\in\mathcal{W}_{\mathcal{A}},
        \ |u|+|v|=n-2,\ 
        \orange{i},\orange{j}\in\mathcal{A}^{\prime\prime}
    \right\},
\]
with the convention that the span is zero for $n<2$. The following lemma
records the two quotient-compatibility properties needed below: this kernel is
a Hopf ideal for the standard signature Hopf algebra, and it is preserved by
the $\Psi$-maps.

\begin{lemma}\label{lem:partial_symmetrization_ideal}
The ideal
$\mathcal{I}_{\mathcal{A}^{\prime\prime}}=\ker(\symmetrizer)$ is a Hopf ideal
for
$
(
T((\mathbb{R}^d)),
\otimes,
\Delta_{\shuffle}
)
$.
Moreover,
\[
    \Psi^{-1}\big(
        \mathcal{I}_{\mathcal{A}^{\prime\prime}}
    \big)
    =
    \mathcal{I}_{\mathcal{A}^{\prime\prime}},
    \qquad
    \Psi\big(
        \mathcal{I}_{\mathcal{A}^{\prime\prime}}
    \big)
    =
    \mathcal{I}_{\mathcal{A}^{\prime\prime}}.
\]
\end{lemma}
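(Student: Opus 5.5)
The plan has two parts. First I show that $\mathcal{I}_{\mathcal{A}^{\prime\prime}}$ is a Hopf ideal, using that it is generated by commutators of primitive elements. Then I dualize the invariance statement to the space of block-symmetric elements $\mathcal{S}:=\symmetrizer^\ast\big(\mathbb{R}_{\mathcal{A}^{\prime\prime}}\langle\mathcal{A}\rangle\big)\subseteq\mathbb{R}\langle\mathcal{A}\rangle$, where the recursion \eqref{eq:Psi_inverse_explicit} for $\Psi^\ast$ can be used directly.

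\emph{Hopf ideal.} By definition, $\mathcal{I}_{\mathcal{A}^{\prime\prime}}$ is the (levelwise completed) two-sided $\otimes$-ideal generated by the elements $c_{\orange{i}\orange{j}}:=e_{\orange{i}}\otimes e_{\orange{j}}-e_{\orange{j}}\otimes e_{\orange{i}}$ with $\orange{i},\orange{j}\in\mathcal{A}^{\prime\prime}$.
\begin{itemize}
\item Each $e_{\orange{i}}$ is primitive, so multiplicativity of $\Delta_{\shuffle}$ gives $\Delta_{\shuffle}(c_{\orange{i}\orange{j}})=c_{\orange{i}\orange{j}}\boxtimes e_\emptyset+e_\emptyset\boxtimes c_{\orange{i}\orange{j}}$.
\item For a general element $u\otimes c_{\orange{i}\orange{j}}\otimes v$ of the ideal, multiplicativity then yields $\Delta_{\shuffle}(u\otimes c\otimes v)\in\mathcal{I}\boxtimes H+H\boxtimes\mathcal{I}$.
\item The counit vanishes on $\mathcal{I}$, since $\mathcal{I}$ lives in tensor levels $\geq2$.
\item The antipode is an anti-morphism with $S(e_{\orange{i}})=-e_{\orange{i}}$. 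Hence $S(c_{\orange{i}\orange{j}})=-c_{\orange{i}\orange{j}}$ and $S(u\otimes c\otimes v)=S(v)\otimes S(c)\otimes S(u)\in\mathcal{I}$.
\end{itemize}
All statements hold level by level, so the completion causes no difficulty.

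\emph{Dualization.} Since $\mathcal{I}_{\mathcal{A}^{\prime\prime}}=\ker(\symmetrizer)$, \eqref{eq:quotient_transpose_pairing} shows that $\mathcal{I}_{\mathcal{A}^{\prime\prime}}$ is exactly the annihilator of $\mathcal{S}$. This is because $\langle\alpha,\widehat{\mathbf{a}}\rangle=\langle\widehat{\alpha}^\ast,\mathbf{a}\rangle$ for all $\alpha$. Levelwise everything is finite dimensional, so $\mathcal{S}$ is also the annihilator of $\mathcal{I}_{\mathcal{A}^{\prime\prime}}$. Since $\langle\alpha,\Psi(\mathbf{a})\rangle=\langle\Psi^\ast(\alpha),\mathbf{a}\rangle$, it therefore suffices to prove $\Psi^\ast(\mathcal{S})\subseteq\mathcal{S}$. (Here one must resist pairing with the ideal $\mathcal{J}_{\mathcal{A}^{\prime\prime}}$: in fact $\Psi^\ast(\mathcal{J})\not\subseteq\mathcal{J}$, for instance $\Psi^\ast(\blue{k}\orange{i}\orange{j}-\blue{k}\orange{j}\orange{i})$ contains $\orange{i}\blue{k}\orange{j}-\orange{j}\blue{k}\orange{i}$. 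The correct dual object is $\mathcal{S}$.)

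Two closure properties of $\mathcal{S}$ are needed.
\begin{itemize}
\item $\mathcal{S}\,\blue{j}\,\mathcal{S}\subseteq\mathcal{S}$. This is \eqref{eq:symmetrizer_block_concatenation}.
\item $\mathcal{S}\shuffle\mathcal{S}\subseteq\mathcal{S}$. This follows from the Hopf-ideal property just proved: for $\alpha,\beta\in\mathcal{S}$ and $\mathbf{a}\in\mathcal{I}$, $\langle\alpha\shuffle\beta,\mathbf{a}\rangle=\langle\alpha\boxtimes\beta,\Delta_{\shuffle}\mathbf{a}\rangle=0$.
\end{itemize}

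\emph{Induction on $\mathcal{A}^{\prime}$-letters.} I prove $\Psi^\ast(\widehat{[w]}^\ast)\in\mathcal{S}$ by induction on the number $k$ of $\mathcal{A}^\prime$-letters in $w$.
\begin{itemize}
\item For $k=0$, $\Psi^\ast$ is the identity on $\mathcal{A}^{\prime\prime}$-words.
\item For $k\geq1$, write $\widehat{[w]}^\ast=\widehat{[w']}^\ast\,\blue{j_k}\,\widehat{[w_k]}^\ast$, where $w'$ is the prefix of $w$ before the last $\mathcal{A}^\prime$-letter.
\item Every term of $\widehat{[w_k]}^\ast$ is a word of $\mathcal{A}^{\prime\prime}$-letters. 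By linearity, the recursion \eqref{eq:Psi_inverse_explicit} reads $\Psi^\ast(\beta\blue{j}r)=(\Psi^\ast(\beta)\blue{j})\shuffle r$ for arbitrary $\beta$.
\item Hence $\Psi^\ast(\widehat{[w]}^\ast)=\big(\Psi^\ast(\widehat{[w']}^\ast)\,\blue{j_k}\big)\shuffle\widehat{[w_k]}^\ast$.
\item By the induction hypothesis and the two closure properties, the right-hand side lies in $\mathcal{S}$.
\end{itemize}

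\emph{Equality and the inverse.} On each level $n$, $\Psi^\ast$ is injective (by \Cref{prop:main_result}) and maps the finite-dimensional space $\mathcal{S}_n$ into itself. Hence $\Psi^\ast(\mathcal{S}_n)=\mathcal{S}_n$, and consequently $(\Psi^{-1})^\ast(\mathcal{S})=\mathcal{S}$ as well. Dualizing back gives $\Psi(\mathcal{I})\subseteq\mathcal{I}$ and $\Psi^{-1}(\mathcal{I})\subseteq\mathcal{I}$. Combining these two inclusions yields $\Psi(\mathcal{I})=\mathcal{I}=\Psi^{-1}(\mathcal{I})$.

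The main obstacle is this shuffle step, namely $\mathcal{S}\shuffle\mathcal{S}\subseteq\mathcal{S}$; deriving it from the Hopf-ideal property, rather than by a direct combinatorial check, is what I would try first.
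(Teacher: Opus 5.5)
Your proposal is correct and follows essentially the paper's proof: you get the Hopf-ideal property from the primitivity of the commutator generators, then reduce invariance to $\Psi^\ast(\operatorname{im}\symmetrizer^\ast)\subseteq\operatorname{im}\symmetrizer^\ast$, proved by induction on the number of $\mathcal{A}'$-letters via $\Psi^\ast(\beta\blue{j}r)=(\Psi^\ast(\beta)\blue{j})\shuffle r$, and obtain equality from levelwise finite-dimensionality. The only variation is that you derive shuffle-closure of $\operatorname{im}\symmetrizer^\ast$ by duality from the Hopf-ideal property rather than directly from the shuffle recursion; this is valid and neatly ties the two halves together, and your side remark that $\Psi^\ast(\mathcal{J}_{\mathcal{A}''})\not\subseteq\mathcal{J}_{\mathcal{A}''}$ is also correct.
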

\begin{proof}
By definition, $\mathcal{I}_{\mathcal{A}^{\prime\prime}}$ is a two-sided
ideal with respect to $\otimes$. We show that it is also compatible with the
coalgebra structure, counit, and antipode.
For every $\orange{i},\orange{j}\in\mathcal{A}^{\prime\prime}$, one has
\begin{equation*}
    \Delta_{\shuffle}\left(
        e_{\orange{i}}e_{\orange{j}}
        -
        e_{\orange{j}}e_{\orange{i}}
    \right)
    =
    \left(
        e_{\orange{i}}e_{\orange{j}}
        -
        e_{\orange{j}}e_{\orange{i}}
    \right)\boxtimes 1
    +
    1\boxtimes
    \left(
        e_{\orange{i}}e_{\orange{j}}
        -
        e_{\orange{j}}e_{\orange{i}}
    \right).
\end{equation*}
Since $\Delta_{\shuffle}$ is multiplicative with respect to $\otimes$, it
follows that
$$\Delta_{\shuffle}(\mathcal{I}_{\mathcal{A}^{\prime\prime}})
\subseteq
\mathcal{I}_{\mathcal{A}^{\prime\prime}}\boxtimes T((\mathbb{R}^d))
+
T((\mathbb{R}^d))\boxtimes\mathcal{I}_{\mathcal{A}^{\prime\prime}}.$$
Moreover,
$\varepsilon(e_{\orange{i}}e_{\orange{j}}-e_{\orange{j}}e_{\orange{i}})=0$,
and since $\varepsilon$ is multiplicative, $\varepsilon$ vanishes on all
elements of $\mathcal{I}_{\mathcal{A}^{\prime\prime}}$. Finally,
\[
    S\left(
        e_{\orange{i}}e_{\orange{j}}
        -
        e_{\orange{j}}e_{\orange{i}}
    \right)
    =
    e_{\orange{j}}e_{\orange{i}}
    -
    e_{\orange{i}}e_{\orange{j}}
    =
    -
    \left(
        e_{\orange{i}}e_{\orange{j}}
        -
        e_{\orange{j}}e_{\orange{i}}
    \right).
\]
Since $S$ is an anti-algebra morphism, this implies
$S(\mathcal{I}_{\mathcal{A}^{\prime\prime}})
\subseteq
\mathcal{I}_{\mathcal{A}^{\prime\prime}}$.
Thus $\mathcal{I}_{\mathcal{A}^{\prime\prime}}$ is a Hopf ideal.

It remains to prove invariance under the the $\Psi$ and $\Psi^{-1}$. By the definition of
$\symmetrizer^\ast$ in \Cref{sec:prelim_quo},
\[
    \mathcal{I}_{\mathcal{A}^{\prime\prime}}
    =
    \bigl(\operatorname{im}\symmetrizer^\ast\bigr)^\perp.
\]
It is therefore enough to show that $\Psi^\ast$ preserves
$\operatorname{im}\symmetrizer^\ast$.
We prove this by induction on the number of letters from
$\mathcal{A}^{\prime}$. If $w\in\mathcal{W}_{\mathcal{A}^{\prime\prime}}$,
then $\Psi_{u}=u$ for every representative $u$ of $[w]$, and therefore
\[
    \Psi^\ast\bigl(\widehat{[w]}^\ast\bigr)
    =
    \widehat{[w]}^\ast.
\]
For the induction step, let
$\decomp{\mathcal{A}^{\prime}}(w)
=
((w_0,\ldots,w_k),\blue{i_1}\cdots\blue{i_k})$
with $k\geq 1$. Summing the recursion for $\Psi$ over all representatives
of $[w]$ gives
\begin{equation*}
    \Psi^\ast\bigl(\widehat{[w]}^\ast\bigr)
    =
    \left(
        \Psi^\ast
        \bigl(
            \symmetrizer^\ast
            \bigl(
                [w_0]\blue{i_1}[w_1]\cdots
                \blue{i_{k-1}}[w_{k-1}]
            \bigr)
        \bigr)
        \blue{i_k}
    \right)
    \shuffle
    \widehat{[w_k]}^\ast.
\end{equation*}
By the induction hypothesis, the first factor lies in
$\operatorname{im}\symmetrizer^\ast$. Appending the letter
$\blue{i_k}\in\mathcal{A}^{\prime}$ preserves this image. The ordinary
shuffle recursion shows directly that
$\operatorname{im}\symmetrizer^\ast$ is closed under $\shuffle$. Hence
$\Psi^\ast(\operatorname{im}\symmetrizer^\ast)
\subseteq
\operatorname{im}\symmetrizer^\ast$.

Taking annihilators gives
$\Psi(\mathcal{I}_{\mathcal{A}^{\prime\prime}})
\subseteq
\mathcal{I}_{\mathcal{A}^{\prime\prime}}$.
Since $\Psi$ is a graded automorphism, this inclusion is an equality on
each homogeneous level. Thus
$\Psi(\mathcal{I}_{\mathcal{A}^{\prime\prime}})
=
\mathcal{I}_{\mathcal{A}^{\prime\prime}}$, and applying $\Psi^{-1}$ gives
$\Psi^{-1}(\mathcal{I}_{\mathcal{A}^{\prime\prime}})
=
\mathcal{I}_{\mathcal{A}^{\prime\prime}}$.
\end{proof}

The Hopf-ideal property in
\Cref{lem:partial_symmetrization_ideal} allows us to form the standard
partially symmetrized signature Hopf algebra by quotient. The invariance of
$\mathcal{I}_{\mathcal{A}^{\prime\prime}}$ under $\Psi$ and $\Psi^{-1}$
also gives well-defined descended maps on this quotient. We now record these
formal consequences and identify the quotient pairing introduced above with
the pairing induced by the descended $\Psi$-map.

\begin{proposition}\label{prop:partial_symmetrized_hopf}
There exists a graded Hopf algebra structure
$
    \left(
        T_{\mathcal{A}^{\prime\prime}}((\mathbb{R}^d)),
        \widehat{\otimes},
        \Delta_{\widehat{\shuffle}}
    \right)
$
for which
\begin{equation*}
    \symmetrizer:
    \left(
        T((\mathbb{R}^d)),
        \otimes,
        \Delta_{\shuffle}
    \right)
    \longrightarrow
    \left(
        T_{\mathcal{A}^{\prime\prime}}((\mathbb{R}^d)),
        \widehat{\otimes},
        \Delta_{\widehat{\shuffle}}
    \right)
\end{equation*}
is a Hopf algebra morphism. In particular,
\begin{equation}\label{eq:partial_symmetrized_signature_coproduct}
    \Delta_{\widehat{\shuffle}}\circ\symmetrizer
    =
    \left(
        \symmetrizer\boxtimes\symmetrizer
    \right)
    \circ\Delta_{\shuffle}.
\end{equation}
Denoting by
\begin{equation*}
    \widehat{\shuffle}:
    \mathbb{R}_{\mathcal{A}^{\prime\prime}}\langle\mathcal{A}\rangle
    \times
    \mathbb{R}_{\mathcal{A}^{\prime\prime}}\langle\mathcal{A}\rangle
    \longrightarrow
    \mathbb{R}_{\mathcal{A}^{\prime\prime}}\langle\mathcal{A}\rangle
\end{equation*}
the corresponding graded dual product, $\Sigsym{Y}$ is group-like and,
equivalently, for all $u,v\in\mathcal{W}_{\mathcal{A}}$,
\begin{equation}\label{eq:partial_symmetrized_signature_shuffle}
    \big\langle [u],\Sigsym{Y}\big\rangle
    \big\langle [v],\Sigsym{Y}\big\rangle
    =
    \big\langle
        [u]\widehat{\shuffle}[v],
        \Sigsym{Y}
    \big\rangle.
\end{equation}
Moreover, $\Psi$ and $\Psi^{-1}$ induce inverse graded automorphisms
$\widehat{\Psi}$ and $\widehat{\Psi}^{-1}$ on
$T_{\mathcal{A}^{\prime\prime}}((\mathbb{R}^d))$, characterized by
\begin{equation}\label{eq:quotient_transform_commutative_diagram}
    \symmetrizer\circ\Psi
    =
    \widehat{\Psi}\circ\symmetrizer,
    \qquad
    \symmetrizer\circ\Psi^{-1}
    =
    \widehat{\Psi}^{-1}\circ\symmetrizer.
\end{equation}
The quotient pairing from \eqref{eq:quotient_pairing} is the pairing induced
by $\widehat{\Psi}$; that is,
\begin{equation}\label{eq:quotient_induced_pairing}
    \big\langle
        \alpha,
        \widehat{\mathbf{a}}
    \big\rangle_\Psi
    =
    \big\langle
        \widehat{\Psi}^\ast(\alpha),
        \widehat{\mathbf{a}}
    \big\rangle
\end{equation}
for all
$\alpha\in
\mathbb{R}_{\mathcal{A}^{\prime\prime}}\langle\mathcal{A}\rangle$
and
$\widehat{\mathbf{a}}\in
T_{\mathcal{A}^{\prime\prime}}((\mathbb{R}^d))$.
\end{proposition}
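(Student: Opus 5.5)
The plan is to deduce every assertion from Lemma~\ref{lem:partial_symmetrization_ideal} by standard quotient arguments; no new computation with signatures should be needed. \textbf{First}, since $\mathcal{I}_{\mathcal{A}^{\prime\prime}}=\ker(\symmetrizer)$ is a Hopf ideal, I invoke \cite[Theorem~4.3.1]{sweedler1969hopf} to get a unique Hopf algebra structure on $T((\mathbb{R}^d))/\mathcal{I}_{\mathcal{A}^{\prime\prime}}$ making the projection a Hopf morphism. I then identify this quotient with $T_{\mathcal{A}^{\prime\prime}}((\mathbb{R}^d))$ via $\symmetrizer$, which is surjective with kernel $\mathcal{I}_{\mathcal{A}^{\prime\prime}}$. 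The transported product is exactly $e_{[w]}\widehat{\otimes}e_{[v]}=e_{[wv]}$. Grading is preserved because $\mathcal{I}_{\mathcal{A}^{\prime\prime}}$ is a product of homogeneous pieces, which also handles the completion: one argues levelwise on each $T^{\le N}$ and passes to the product. Being a Hopf morphism gives \eqref{eq:partial_symmetrized_signature_coproduct}.

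\textbf{Second}, I define $\widehat{\shuffle}$ as the graded dual of $\Delta_{\widehat{\shuffle}}$. Since $\Sig{Y}$ is group-like, \eqref{eq:partial_symmetrized_signature_coproduct} gives
$\Delta_{\widehat\shuffle}\Sigsym{Y}=(\symmetrizer\boxtimes\symmetrizer)(\Sig{Y}\boxtimes\Sig{Y})=\Sigsym{Y}\boxtimes\Sigsym{Y}$.
The counit is unchanged at level zero, so $\Sigsym{Y}$ is group-like. Pairing with $[u]\boxtimes[v]$ then yields \eqref{eq:partial_symmetrized_signature_shuffle}, and the converse direction holds by the same duality.

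\textbf{Third}, because $\Psi(\mathcal{I}_{\mathcal{A}^{\prime\prime}})=\mathcal{I}_{\mathcal{A}^{\prime\prime}}$, the map $\symmetrizer\circ\Psi$ vanishes on $\ker\symmetrizer$. It therefore factors uniquely as $\widehat\Psi\circ\symmetrizer$, and the same argument produces $\widehat{\Psi}^{-1}$. Both factored maps are graded since $\Psi$ and $\symmetrizer$ are. To see that they are mutually inverse, note that $\widehat\Psi\widehat\Psi^{-1}\symmetrizer=\symmetrizer\Psi\Psi^{-1}=\symmetrizer$, and use surjectivity of $\symmetrizer$, applying Theorem~\ref{prop:main_result}. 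This establishes \eqref{eq:quotient_transform_commutative_diagram}.

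\textbf{Finally}, for \eqref{eq:quotient_induced_pairing} I need to be precise about what $\langle\alpha,\widehat{\mathbf a}\rangle_\Psi$ means as used in Theorem~\ref{thm:partial_symmetrized_gamma}, namely $\langle\widehat\alpha^\ast,\mathbf a\rangle_\Psi=\langle\Psi^\ast\widehat\alpha^\ast,\mathbf a\rangle$ for any lift $\mathbf a$ of $\widehat{\mathbf a}$. This is well defined because $\Psi^\ast$ preserves $\operatorname{im}\symmetrizer^\ast=\mathcal{I}_{\mathcal{A}^{\prime\prime}}^\perp$, as shown in the proof of the lemma. Using \eqref{eq:quotient_transpose_pairing} and \eqref{eq:quotient_transform_commutative_diagram}, I then compute
\[
\langle \Psi^\ast\widehat\alpha^\ast,\mathbf a\rangle=\langle\widehat\alpha^\ast,\Psi\mathbf a\rangle=\langle\alpha,\widehat{\Psi\mathbf a}\rangle=\langle\alpha,\widehat\Psi\widehat{\mathbf a}\rangle=\langle\widehat\Psi^\ast\alpha,\widehat{\mathbf a}\rangle .
\]
I expect the main obstacle to be this bookkeeping rather than any substantive step: getting the well-definedness of the quotient pairing right, and making the completed/graded quotient rigorous, which the levelwise truncation argument handles.
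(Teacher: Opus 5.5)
Your proposal is correct and follows the paper's proof step for step: Sweedler's quotient theorem applied via Lemma~\ref{lem:partial_symmetrization_ideal}, group-likeness of $\Sigsym{Y}$ deduced from \eqref{eq:partial_symmetrized_signature_coproduct}, descent of $\Psi^{\pm1}$ through $\ker(\symmetrizer)$, and a short adjoint computation for \eqref{eq:quotient_induced_pairing}. The only cosmetic difference is in that last step: you move $\Psi$ across the pairing on the tensor side, whereas the paper first takes adjoints of \eqref{eq:quotient_transform_commutative_diagram} to obtain $\Psi^\ast\circ\symmetrizer^\ast=\symmetrizer^\ast\circ\widehat{\Psi}^\ast$ and then computes.
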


\begin{proof}
By \Cref{lem:partial_symmetrization_ideal},
$\mathcal{I}_{\mathcal{A}^{\prime\prime}}=\ker(\symmetrizer)$ is a Hopf
ideal for
$
(
T((\mathbb{R}^d)),
\otimes,
\Delta_{\shuffle}
)
$.
The quotient construction for Hopf ideals in
\cite[Theorem~4.3.1]{sweedler1969hopf} therefore yields the stated graded Hopf
algebra structure on
$T_{\mathcal{A}^{\prime\prime}}((\mathbb{R}^d))$, and makes
$\symmetrizer$ a Hopf algebra morphism. This gives
\eqref{eq:partial_symmetrized_signature_coproduct}.

Since $\Sig{Y}$ is group-like with respect to $\Delta_{\shuffle}$, we obtain
\[
    \Delta_{\widehat{\shuffle}}\bigl(\Sigsym{Y}\bigr)
    =
    \Delta_{\widehat{\shuffle}}
    \bigl(
        \symmetrizer(\Sig{Y})
    \bigr)
    =
    \left(
        \symmetrizer\boxtimes\symmetrizer
    \right)
    \Delta_{\shuffle}\bigl(\Sig{Y}\bigr)
    =
    \Sigsym{Y}\boxtimes\Sigsym{Y}.
\]
By graded duality, this is equivalent to
\eqref{eq:partial_symmetrized_signature_shuffle}.
It remains to record the descent of the $\Psi$-maps. Again by
\Cref{lem:partial_symmetrization_ideal}, both $\Psi$ and $\Psi^{-1}$ preserve
$\mathcal{I}_{\mathcal{A}^{\prime\prime}}$. Hence the assignments
\[
    \widehat{\Psi}
    \bigl(
        \widehat{\mathbf{a}}
    \bigr)
    :=
    \widehat{\Psi(\mathbf{a})},
    \qquad
    \widehat{\Psi}^{-1}
    \bigl(
        \widehat{\mathbf{a}}
    \bigr)
    :=
    \widehat{\Psi^{-1}(\mathbf{a})}
\]
are well defined. They are inverse graded automorphisms because
$\Psi$ and $\Psi^{-1}$ are inverse graded automorphisms. Their defining
property is exactly
\eqref{eq:quotient_transform_commutative_diagram}. Taking graded adjoints
therein gives
\begin{equation}\label{eq:psi_hat_ast_form}
    \Psi^\ast\circ\symmetrizer^\ast
=
\symmetrizer^\ast\circ\widehat{\Psi}^\ast, \qquad (\Psi^{-1})^\ast\circ\symmetrizer^\ast
=
\symmetrizer^\ast\circ(\widehat{\Psi}^{-1})^\ast.
\end{equation}
Finally, let
$\widehat{\mathbf{a}}=\symmetrizer(\mathbf{a})$. Using
\eqref{eq:quotient_pairing}, \eqref{eq:psi_hat_ast_form} and the definition of
the induced pairing, \eqref{eq:quotient_induced_pairing} follows by
\[
    \left\langle \alpha,\widehat{\mathbf{a}}\right\rangle_\Psi
    =
    \left\langle \widehat{\alpha}^\ast,\mathbf{a}\right\rangle_\Psi
    =
    \left\langle \Psi^\ast\bigl(\widehat{\alpha}^\ast\bigr),\mathbf{a}\right\rangle
    =
    \left\langle \symmetrizer^\ast\bigl(\widehat{\Psi}^\ast(\alpha)\bigr),\mathbf{a}\right\rangle
    =
    \left\langle \widehat{\Psi}^\ast(\alpha),\widehat{\mathbf{a}}\right\rangle.
\]
\end{proof}

We finally identify the product $\gammashufflesym$ from
\Cref{thm:partial_symmetrized_gamma} with the product dual to the standard
quotient coproduct with respect to the induced pairing.

\begin{proposition}\label{prop:quotient_gamma_dual_product}
For all
$\alpha,\beta\in
\mathbb{R}_{\mathcal{A}^{\prime\prime}}\langle\mathcal{A}\rangle$,
\begin{equation}\label{eq:quotient_gamma_dual_product_explicit}
    \alpha\gammashufflesym\beta
    =
    \bigl(
        \widehat{\Psi}^{-1}
    \bigr)^\ast
    \left(
        \widehat{\Psi}^\ast(\alpha)
        \widehat{\shuffle}
        \widehat{\Psi}^\ast(\beta)
    \right).
\end{equation}
Consequently, for all
$\widehat{\mathbf{a}}\in
T_{\mathcal{A}^{\prime\prime}}((\mathbb{R}^d))$
\begin{equation}\label{eq:quotient_gamma_coproduct_duality}
    \left\langle
        \alpha\gammashufflesym\beta,
        \widehat{\mathbf{a}}
    \right\rangle_\Psi
    =
    \left\langle
        \alpha\boxtimes\beta,
        \Delta_{\widehat{\shuffle}}
        \bigl(
            \widehat{\mathbf{a}}
        \bigr)
    \right\rangle_{\Psi\boxtimes\Psi}.
\end{equation}
\end{proposition}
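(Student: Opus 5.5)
The plan is to transport everything to the unsymmetrized level via the injective adjoint $\symmetrizer^\ast$, where the corresponding identity \eqref{eq:gamma_dual_product_explicit} already holds by definition.

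\emph{Step 1: reduce to an identity after applying $\symmetrizer^\ast$.} Since $\symmetrizer^\ast$ is injective, it suffices to show that both sides of \eqref{eq:quotient_gamma_dual_product_explicit} have the same image under $\symmetrizer^\ast$. For the left-hand side, \eqref{eq:partial_symmetrized_gamma_descent} from the proof of \Cref{thm:partial_symmetrized_gamma} gives, after extending bilinearly,
\[
    \symmetrizer^\ast(\alpha\gammashufflesym\beta)
    =
    \symmetrizer^\ast(\alpha)\gammashuffle\symmetrizer^\ast(\beta).
\]

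\emph{Step 2: compute the right-hand side.} First, $\symmetrizer^\ast$ is a shuffle morphism from $\widehat{\shuffle}$ to $\shuffle$. To see this, dualize \eqref{eq:partial_symmetrized_signature_coproduct}: for every $\mathbf{a}$,
\[
\langle \symmetrizer^\ast(\alpha\widehat{\shuffle}\beta),\mathbf{a}\rangle
=\langle \alpha\boxtimes\beta,(\symmetrizer\boxtimes\symmetrizer)\Delta_{\shuffle}\mathbf{a}\rangle
=\langle \symmetrizer^\ast\alpha\shuffle\symmetrizer^\ast\beta,\mathbf{a}\rangle .
\]
Next, combine this with the adjoint relations \eqref{eq:psi_hat_ast_form}. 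They give
\[
\symmetrizer^\ast(\widehat{\Psi}^{-1})^\ast\bigl(\widehat{\Psi}^\ast\alpha\,\widehat{\shuffle}\,\widehat{\Psi}^\ast\beta\bigr)
=(\Psi^{-1})^\ast\symmetrizer^\ast\bigl(\widehat{\Psi}^\ast\alpha\,\widehat{\shuffle}\,\widehat{\Psi}^\ast\beta\bigr)
=(\Psi^{-1})^\ast\bigl(\symmetrizer^\ast\widehat{\Psi}^\ast\alpha\shuffle\symmetrizer^\ast\widehat{\Psi}^\ast\beta\bigr).
\]
Applying \eqref{eq:psi_hat_ast_form} once more turns this into
\[
(\Psi^{-1})^\ast\bigl(\Psi^\ast\symmetrizer^\ast\alpha\shuffle\Psi^\ast\symmetrizer^\ast\beta\bigr).
\]
By \eqref{eq:gamma_dual_product_explicit}, this is exactly $\symmetrizer^\ast\alpha\gammashuffle\symmetrizer^\ast\beta$, which matches Step 1 and proves \eqref{eq:quotient_gamma_dual_product_explicit}.

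\emph{Step 3: the coproduct duality.} Write $\widehat{\mathbf a}=\symmetrizer(\mathbf a)$. Using \eqref{eq:quotient_induced_pairing} and the formula just proved,
\[
\langle\alpha\gammashufflesym\beta,\widehat{\mathbf a}\rangle_\Psi
=\langle\widehat{\Psi}^\ast(\alpha\gammashufflesym\beta),\widehat{\mathbf a}\rangle
=\langle \widehat{\Psi}^\ast\alpha\,\widehat{\shuffle}\,\widehat{\Psi}^\ast\beta,\widehat{\mathbf a}\rangle .
\]
Here $\widehat{\Psi}^\ast$ and $(\widehat{\Psi}^{-1})^\ast$ cancel because $\widehat{\Psi}$ and $\widehat{\Psi}^{-1}$ are mutually inverse graded automorphisms. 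By the defining duality of $\widehat{\shuffle}$ with $\Delta_{\widehat{\shuffle}}$, this equals $\langle\widehat{\Psi}^\ast\alpha\boxtimes\widehat{\Psi}^\ast\beta,\Delta_{\widehat{\shuffle}}\widehat{\mathbf a}\rangle$, which is the $\Psi\boxtimes\Psi$-pairing on the right of \eqref{eq:quotient_gamma_coproduct_duality}.

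\emph{Main obstacle.} I expect the delicate point to be justifying, in Step 1, that \eqref{eq:partial_symmetrized_gamma_descent} holds for all quotient elements and not only for quotient words, and in Step 2, that the graded-duality manipulations in the completed tensor products are legitimate. Both reduce to working levelwise with finite-dimensional homogeneous components, since all maps involved are graded. Everything else is formal.
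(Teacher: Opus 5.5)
Your proposal is correct and follows essentially the same route as the paper's proof. Both reduce the first identity via the injectivity of $\symmetrizer^\ast$, combining \eqref{eq:psi_hat_ast_form}, the graded dual of \eqref{eq:partial_symmetrized_signature_coproduct} and \eqref{eq:partial_symmetrized_gamma_descent}, and both then obtain \eqref{eq:quotient_gamma_coproduct_duality} by the same chain of pairings using \eqref{eq:quotient_induced_pairing}.
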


\begin{proof}
Using \eqref{eq:psi_hat_ast_form} and the graded dual of
\eqref{eq:partial_symmetrized_signature_coproduct}, we obtain
\begin{align*}
    \symmetrizer^\ast
    \left(
        \bigl(
            \widehat{\Psi}^{-1}
        \bigr)^\ast
        \left(
            \widehat{\Psi}^\ast(\alpha)
            \widehat{\shuffle}
            \widehat{\Psi}^\ast(\beta)
        \right)
    \right)
    &=
    (\Psi^{-1})^\ast
    \left(
        \symmetrizer^\ast
        \left(
            \widehat{\Psi}^\ast(\alpha)
        \right)
        \shuffle
        \symmetrizer^\ast
        \left(
            \widehat{\Psi}^\ast(\beta)
        \right)
    \right)
    \\
    &=
    (\Psi^{-1})^\ast
    \left(
        \Psi^\ast
        \bigl(
            \widehat{\alpha}^\ast
        \bigr)
        \shuffle
        \Psi^\ast
        \bigl(
            \widehat{\beta}^\ast
        \bigr)
    \right)
    \\
    &=
    \widehat{\alpha}^\ast
    \gammashuffle
    \widehat{\beta}^\ast
    \\
    &=
    \symmetrizer^\ast
    \bigl(
        \alpha\gammashufflesym\beta
    \bigr).
\end{align*}
where the last equality follows from
\eqref{eq:partial_symmetrized_gamma_descent}. Since
$\symmetrizer^\ast$ is injective, this proves
\eqref{eq:quotient_gamma_dual_product_explicit}.
Using \eqref{eq:quotient_induced_pairing},
\eqref{eq:quotient_gamma_dual_product_explicit}, and the graded duality
between $\widehat{\shuffle}$ and
$\Delta_{\widehat{\shuffle}}$, we obtain \eqref{eq:quotient_gamma_coproduct_duality} from
\begin{align*}
    \left\langle
        \alpha\gammashufflesym\beta,
        \widehat{\mathbf{a}}
    \right\rangle_\Psi
    &=
    \left\langle
        \widehat{\Psi}^\ast
        \bigl(
            \alpha\gammashufflesym\beta
        \bigr),
        \widehat{\mathbf{a}}
    \right\rangle
    \\
    &=
    \left\langle
        \widehat{\Psi}^\ast(\alpha)
        \widehat{\shuffle}
        \widehat{\Psi}^\ast(\beta),
        \widehat{\mathbf{a}}
    \right\rangle
    \\
    &=
    \left\langle
        \widehat{\Psi}^\ast(\alpha)
        \boxtimes
        \widehat{\Psi}^\ast(\beta),
        \Delta_{\widehat{\shuffle}}
        \bigl(
            \widehat{\mathbf{a}}
        \bigr)
    \right\rangle
    \\
    &=
    \left\langle
        \alpha\boxtimes\beta,
        \Delta_{\widehat{\shuffle}}
        \bigl(
            \widehat{\mathbf{a}}
        \bigr)
    \right\rangle_{\Psi\boxtimes\Psi}.
\end{align*}
\end{proof}

\subsection{Change of coordinates for partial rough paths and related works}\label{sec:sigtransform_roughpath}
So far we have considered pairs $Y=(A,X)$ of finite-variation paths, and represented both the canonical lift $\Sig{Y}$ and the partially symmetrized version $\wh{\mathrm{Sig}}(Y)$ in $\Psi$-coordinates, see \eqref{def:gamma_signature} and \eqref{def:gamma_signature_sym}. In this section, we extend these considerations to \emph{partially irregular paths}: the component $A$ remains regular enough to be integrated against, while we allow $X\in C^{p\text{-var}}([0,T];\mathbb R^d)$; see \eqref{def:p-var}. The advantage of the representations \eqref{def:gamma_signature} and \eqref{def:gamma_signature_sym} is that they make explicit which information is required from the irregular component. For the non-symmetrized object \eqref{def:gamma_signature}, we require the partial signature of $X$. To deal with the first case, we use rough path signature lifts introduced in Section~\ref{sec:paths_signatures}. In particular, we will assume that we are given a geometric $p$-rough path $\mathbf X\in \mathscr C_g^{p-var,0}([0,T];\mathbb R^d)$, such that $\pi_1(\mathbf X)=X$, with unique signature lift denoted by $\Sig{\RP{X}}$. For the symmetrized object \eqref{def:gamma_signature_sym}, the situation is simpler. Here the partial signature of $X$ is only used through its symmetrization, which is given by symmetric tensor powers of the increments of $X$. Hence these coordinates are already determined by the continuous path $X$ itself and do not require a rough path enhancement. In both cases, the remaining operations are integrations against $\dd A$, which are well-defined under the regularity assumptions imposed on $A$.

Under sufficient regularity of $A$, one can canonically lift the pair $(A,\RP{X})$ to a joint rough path $\RP{Y}\in \mathscr{C}_g^{p\text{-var}}([0,T];\R^{m+d})$, and thus canonically define the signature $\Sig{A,\RP{X}}$. This construction is well known and can for instance be found in \cite[Chapter 9.4]{friz2010multidimensional}. We now show how the same construction appears naturally through the lens of the $\Psi$-coordinates.

\begin{corollary}\label{cor:rough_gamma}
    Let $p>2$ and let $q\geq 1$ be such that $1/p+1/q>1$. For any pair $(A,\RP{X})\in C^{q\text{-var}}([0,T];\R^m)\times \mathscr{C}^{p\text{-var},0}_g([0,T];\R^d)$, we define now the $T((\mathbb{R}^d))$-valued path $\{\mathbf{S}(Y)_{s,t}:(s,t)\in \Delta_{0,T}^2\}$ in $\Psi$-coordinates  \begin{equation}\label{def:gamma_sig_rough}
        \langle \emptyset,\mathbf{S}(Y)_{s,t}\rangle_\Psi :=1, \quad \langle w, \mathbf{S}(Y)_{s,t}\rangle_\Psi := \langle w_k, \Sig{\RP{X}}_{s,t}\rangle\int_{\Delta^{k}_{s,t}}\prod_{j=1}^{k}  \langle w_{j-1}, \Sig{\RP{X}}_{s,t_j}\rangle \dd A^{\blue{i_{j}}}_{t_{j}},
    \end{equation}
    for all $w\in \mathcal{W}_{\mathcal{A}}$ with $\Delta_{\mathcal A'}(w)=\{(w_0,\dots,w_k),\blue{i_1}\cdots\blue{i_k}\}$, where the integrals are understood in the Young sense. Then $\pi_{\leq \lfloor p\rfloor}(\mathbf{S}(Y))= \mathbf{Y} \in \mathscr{C}_g^{p-var}([0,T];\R^{m+d})$ with $\pi_1(\mathbf{Y})=(A,X)$, and the full signature lift is determined by  $\langle w,\Sig{\RP{Y}}_{s,t}\rangle =\langle \Psi_w^{-1},\mathbf{S}(Y)_{s,t}\rangle_\Psi$.
\end{corollary}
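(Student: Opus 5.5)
The plan is a smooth approximation argument, combined with continuity of the Young integrals defining $\mathbf{S}(Y)$. Since $\RP{X}$ is geometric, there are smooth paths $X^n\in C^{1\text{-var}}$ with $\pi_{\leq\lfloor p\rfloor}\Sig{X^n}\to\RP{X}$ in $p$-variation rough path metric. After interpolating exponents, convergence holds in $p'$-variation for some $p'>p$ that still satisfies $1/p'+1/q>1$. Likewise, $A$ is approximated by $A^n\in C^{1\text{-var}}$ with uniform bounds in $q$-variation, converging in $q'$-variation with $1/p'+1/q'>1$. This is possible for $q>1$ by the same interpolation; for $q=1$ one can simply keep $A^n=A$ and use Riemann--Stieltjes integrals throughout.

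For the smooth pair $Y^n=(A^n,X^n)$, \Cref{prop:psi_inv} gives $\langle w,\Sig{Y^n}_{s,t}\rangle_\Psi=\langle w,\mathbf{S}(Y^n)_{s,t}\rangle_\Psi$. By \Cref{prop:Psi} and \Cref{prop:main_result}, also $\langle w,\Sig{Y^n}\rangle=\langle\Psi^{-1}_w,\mathbf{S}(Y^n)\rangle_\Psi$.

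\textbf{Step 1: continuity of $\mathbf{S}$.} The integrand $t_j\mapsto\langle w_{j-1},\Sig{\RP{X}}_{s,t_j}\rangle$ has finite $p$-variation. Indeed, Lyons' extension gives control $\|\langle v,\Sig{\RP{X}}_{u,v}\rangle\|\lesssim\omega(u,v)^{|v|/p}$, and Chen's relation expresses increments in $t$ of $\langle v,\Sig{\RP{X}}_{s,\cdot}\rangle$ as finite sums of products of such terms. The iterated Young integral against $\dd A$ is therefore well defined. Induction on $k$, using the Young--Loève estimate and continuity of the Lyons extension in $p'$-variation, then shows $\mathbf{S}(Y^n)_{s,t}\to\mathbf{S}(Y)_{s,t}$ coordinatewise. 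We also obtain uniform control of the form $|\langle w,\mathbf{S}(Y)_{s,t}\rangle_\Psi|\le C\,\omega(s,t)^{|w|_{\mathcal A''}/p+|w|_{\mathcal A'}/q}$. This step is the main technical obstacle, specifically keeping track of the $p$-variation of the nested integrands through the induction.

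\textbf{Step 2: define $\RP{Y}$ and pass algebraic properties to the limit.} Set $\RP{Y}_{s,t}:=\Psi^{-1}(\mathbf{S}(Y)_{s,t})$, which makes sense since $\Psi^{-1}$ is graded and linear. Then $\RP{Y}=\lim_n\Sig{Y^n}$ coordinatewise, because $\Psi^{-1}_w$ is a finite linear combination. Chen's relation and the shuffle identity hold for each $\Sig{Y^n}$, and both are closed under coordinatewise limits. Hence $\pi_{\leq\lfloor p\rfloor}\RP{Y}$ takes values in $G^{\lfloor p\rfloor}$ and is multiplicative. Its first level is $(A,X)$ by \Cref{ex:level_2_construction}.

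\textbf{Step 3: finite $p$-variation and identification with the full lift.} The bound from Step 1 gives level-$n$ control $\omega^{n/p}$, since $q\le p$ and we may take $\omega(s,t)\lesssim1$. So $\pi_{\le\lfloor p\rfloor}\RP{Y}\in\mathscr C_g^{p\text{-var}}$, and it is a limit of smooth lifts in $p'$-variation. By uniqueness in Lyons' extension theorem, the full extension of $\pi_{\le\lfloor p\rfloor}\RP{Y}$ coincides with $\Psi^{-1}(\mathbf{S}(Y))$. Both are multiplicative with $p$-variation control at all levels, and they agree on levels $\le\lfloor p\rfloor$. This gives $\langle w,\Sig{\RP{Y}}\rangle=\langle\Psi^{-1}_w,\mathbf{S}(Y)\rangle_\Psi$. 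Finally, uniqueness of the Young-pairing joint lift shows that $\RP{Y}$ agrees with the canonical construction of \cite[Chapter 9.4]{friz2010multidimensional}.
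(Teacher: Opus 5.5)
Your proposal is correct, but it is organized differently from the paper's proof. The paper does not build the joint lift. It takes the canonical Young-pairing lift $\RP{Y}$ above $(A,\RP{X})$ from \cite[Theorem~9.28]{friz2010multidimensional} as given and picks smooth $(A^n,X^n)$ with $\Sig{A^n,X^n}\to\RP{Y}$ in the rough path topology. It then notes that $\mathbf{S}(Y^n)\to\mathbf{S}(Y)$ coordinatewise by continuity of Young integration, and concludes $\RP{Y}=\pi_{\leq\lfloor p\rfloor}(\mathbf{S}(Y))$ by uniqueness of limits. Levels above $\lfloor p\rfloor$ are handled through continuity of the Lyons extension.

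You instead prove directly that the coordinatewise limit is a weakly geometric $p$-rough path. Chen's relation and the shuffle identity survive coordinatewise limits. Finite $p$-variation comes from the homogeneous bound $C\,\omega(s,t)^{|w|_{\mathcal{A}''}/p+|w|_{\mathcal{A}'}/q}$ on the $\Psi$-coordinates, transferred to standard coordinates because $\Psi^{-1}$ is graded. The higher levels are identified by the uniqueness part of Lyons' extension theorem.

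The paper's argument is shorter, but it outsources existence of the lift to the general theory. It also leaves implicit in which topologies $A^n\to A$ and $X^n\to\RP{X}$ must converge for the Young integrals to converge, a point you settle via $p'$- and $q'$-variation interpolation. Your argument is self-contained and effectively constructs the Young pairing through the $\Psi$-coordinates. It also yields mixed-grading estimates as a by-product. The cost is the inductive $p$-variation bookkeeping for the nested integrands in Step~1, which is routine.

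One notational slip: since $\langle w,\mathbf{S}(Y)\rangle_\Psi=\langle w,\Psi(\mathbf{S}(Y))\rangle$, the object $\mathbf{S}(Y)$ is already expressed in standard coordinates. Your own identity $\langle w,\Sig{Y^n}\rangle_\Psi=\langle w,\mathbf{S}(Y^n)\rangle_\Psi$ shows this. So in Step~2 you should set $\RP{Y}:=\mathbf{S}(Y)$, i.e.\ apply $\Psi^{-1}$ to the coordinate vector $\Psi(\mathbf{S}(Y))$ rather than to $\mathbf{S}(Y)$. With this reading, $\RP{Y}=\lim_n\Sig{Y^n}$ and $\langle w,\Sig{\RP{Y}}\rangle=\langle\Psi^{-1}_w,\mathbf{S}(Y)\rangle_\Psi$ hold exactly as you claim.
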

\begin{proof}
First note that  $\mathbf{S}(Y)$ is well-defined. Indeed, by Lyons' extension theorem, the full signature $\Sig{\RP{X}}$ is canonically determined by the rough path $\RP{X}$, and for every word $v \in \mathcal{W}_{\mathcal{A}''}$, the path $r\mapsto \langle v,\Sig{\RP{X}}_{s,r}\rangle$ has finite $p$-variation. Since $A$ has finite $q$-variation and $1/p+1/q>1$, the integrals appearing in the definition of $\mathbf{S}(Y)$ are well-defined as Young integrals. Now using the fact that there exists a joint $p$-rough path $\mathbf{Y}$ above $(A,\mathbf{X})$, given by \cite[Theorem 9.28]{friz2010multidimensional}, we can find a sequence of finite-variation paths $(A^n,X^n) \rightarrow (A,X)$, such that $\Sig{A^n,X^n}$ converges in the $p$-rough path topology. On the other-hand, by definition \eqref{def:gamma_sig_rough}, continuity of Young integration and the fact that $\pi_{\leq \lfloor p \rfloor }(\Sig{X^n}) \rightarrow \mathbf{X}$, we have coordinate-wise convergence of $\mathbf{S}(Y^n)$ to $\mathbf{S}(Y)$. Combining these observations with Proposition~\ref{prop:Psi}, we have $$\mathbf{Y}= \lim_{n\rightarrow \infty}\pi_{\leq \lfloor p\rfloor}(\Sig{A^n,X^n})= \lim_{n\rightarrow \infty}\pi_{\leq \lfloor p\rfloor}(\mathbf{S}(Y^n))´)=\pi_{\leq \lfloor p\rfloor}(\mathbf{S}(Y)),$$ and by Lyons' extension we can do the same for any $N>\lfloor p \rfloor$, so that in particular $\mathbf{S}(Y)=\Sig{\mathbf{Y}}$.
\end{proof}

In applications, the irregular component $(X_t)_{t\in[0,T]}$ can be interpreted as the data or underlying signal, while $A$ is typically a deterministic and regular feature, such as time $A_t=t$, which ensures expressiveness of the signature, resp. the symmetrized version thereof. Another interesting class of examples arises when $A$ is a (semi-)martingale. In this case, the complementary Young regularity required in Corollary~\ref{cor:rough_gamma} can never be fulfilled, since both $p,q>2$, and the cross-terms $\int \mathbf{X} \dd A$ are no longer canonically defined. Nevertheless, by using stochastic calculus for the $\dd A$-integrals, it is still possible to construct a joint stochastic rough path $\mathbf{Y}$ above $(A,\mathbf{X})$, and thus by Lyons' extension, the signature $\mathrm{Sig}(\mathbf{Y})$. This is, for instance, of interest when studying mixed rough and stochastic differential equations (or RSDEs) \cite{friz2021rough}, i.e.
$$
Z_0=\xi \in \mathbb{R}^m, \qquad  
\dd Z_t=b_t(Z_t)\dd \mathbf{X}_t+\sigma_t(Z_t)\dd A_t, 
\qquad 0<t \leq T,
$$
which have found many applications. Constructing the joint-lift has been carried out for Brownian motion $A$ already in \cite{diehl2015levy,diehl2017stochastic}, see also \cite[Chapter 12.2]{friz2020course}, and has since been developed in many further works. For a general treatment, we refer to the recent work \cite{friz2020rough}. In the following remark, we illustrate how such lifts can be constructed naturally using our machinery. A fully rigorous treatment is outside the scope of this article.

\begin{remark}\label{rem:partial_stochastic_lift}
Suppose $(A_t)_{t\in [0,T]}$ is a semimartingale adapted to some complete filtered probability space. Given a deterministic rough path $\mathbf{X} \in \mathscr{C}_g^{p\text{-var}}$, one may still define a stochastic version of \eqref{def:gamma_sig_rough}, where the integrals against $\dd A$ are understood as stochastic integrals. For example, if $2<p\leq 3$ and $\mathbf{X}=(X,\mathbb{X})$, the first level is still $\mathbf{S}(Y)^{(1)}_{s,t}=(A_{s,t},X_{s,t})$, and on the second level define
\begin{equation}\label{eq:expl_rough_gamma}
  \mathbf{S}(Y)^{(2)}_{s,t}:=\begin{pmatrix}
    \int_{s<u<v<t} \circ \dd A_u^{\blue{i}}\circ \dd A_v^{\blue{j}} 
    & 
    \langle \orange{i},\mathbf{X}_{s,t}\rangle A^{\blue{j}}_{s,t} 
    \\
    \int_{s<u<t} \langle\orange{i},\mathbf{X}_{s,u}\rangle \circ \dd A_u^{\blue{j}} 
    & 
    \langle \orange{ij},\mathbf{X}_{s,t}\rangle
\end{pmatrix}
\in \mathbb{R}^{m+d} \otimes \mathbb{R}^{m+d},
\end{equation}
with the obvious block interpretation and where $\circ \dd A$ denotes Stratonovich integration. It follows directly from the definition of $\Psi^{-1}$ that $\mathbf{Y}_{s,t}:=\Psi^{-1}(1,\mathbf{S}(Y)^{(1)}_{s,t},\mathbf{S}(Y)^{(2)}_{s,t})$ is group-like, using the geometricity of the partial rough path $\mathbf{X}$ and of the Stratonovich lift of $A$, and that Chen's relation holds. The remaining point is to verify the required sample-path regularity, which is typically done by means of a suitable Kolmogorov criterion for rough paths; see, for example, \cite[Theorem 3.1]{friz2020course}. Similarly, for rough paths $\mathbf{X}$ of lower regularity, say $p>3$, one may define the corresponding higher-order stochastic $\Psi$-coordinates and then transform them via $\Psi^{-1}$ to obtain a stochastic joint rough path lift.
\end{remark}
Another interesting example, which arises in the context of rough volatility modeling \cite{bayer2023rough}, is given by joint stochastic dynamics $Y=(S,V)$, where one may think of $S$ as the price process and $V$ as the volatility process of an asset. Here $S$ is typically a semimartingale, while $V$ may be neither a semimartingale nor a Markov process, for instance when $V$ is driven by a fractional Brownian motion correlated with $S$. While one may still lift the $V$-component to a stochastic geometric rough path $\mathbf{V}$, the joint-lift construction discussed above becomes more delicate. For example, let $A$ be a standard Brownian motion and let $V_t=\int_0^t(t-s)^{H-1/2}\dd W_s$ with $H<1/2$ and $\langle W,A\rangle_t=\rho t$ for some $\rho\neq0$. Then the Stratonovich integral corresponding to the word $w=\orange{i}\blue{j}$ in \eqref{eq:expl_rough_gamma} is no longer well-defined, since the corresponding quadratic covariation correction explodes. A joint lift construction in this setting has recently been presented in \cite{bonesini2024rough}, and we can also illustrate their construction through our machinery.

\begin{remark}\label{rem:joint_lifts_rvol}
    The important observation for the $2<p\leq 3$ example in \eqref{eq:expl_rough_gamma}, and similarly for $p>3$, is that the choice of integration for the components $\orange{i}\blue{j}$ is not relevant for the algebraic group-like property; the latter is taken care of by the transform $\Psi^{-1} $. Indeed, recall that by definition of $\Psi^{-1}$ in \eqref{eq:implicit_psi} we have
    $$
    \Psi^{-1}_{\orange{i}\blue{j}} = \orange{i}\blue{j}, \quad \Psi^{-1}_{\blue{j}\orange{i}}= \blue{j}\orange{i}-\orange{i}\blue{j}, \qquad   \langle \orange{i}\shuffle \blue{j},\Psi^{-1}(\mathbf{x})\rangle = \langle \Psi^{-1}_{\orange{i}\blue{j}}+\Psi^{-1}_{\blue{j}\orange{i}},\mathbf{x}\rangle =\langle \blue{j}\orange{i},\mathbf{x}\rangle.
    $$
    Now for $\mathbf{S}(Y)^{\leq 2}=(1,Y,\mathbb{Y}^{(2)})$ in \eqref{eq:expl_rough_gamma}, we have $\langle \blue{j}\orange{i}, \mathbf{S}(Y)^{\leq 2}\rangle = \langle \blue{j},\mathbf{S}(Y)^{\leq 2}\rangle \langle \orange{i},\mathbf{S}(Y)^{\leq 2}\rangle$. On the other hand, for the diagonal terms $\orange{i}\orange{j}$ and $\blue{i}\blue{j}$ the shuffle identity holds by geometricity of the individual lifts. In summary, $\Psi^{-1} \circ \mathbf{S}$ is group-like independently of how the component $\orange{i}\blue{j}$ is defined. Thus, in view of the ill-posed Stratonovich integral, we may define this component via Itô integration. The resulting transform is still a geometric rough path, provided the Chen relations and the required sample-path regularity continue to hold. The same idea applies to more irregular paths, that is, to $p>3$, where the construction has to be carried out slightly more carefully. In this case, in \eqref{def:gamma_sig_rough} one uses $\circ \dd A^{\blue{i_j}}$ whenever $w_{j-1}=\varnothing$ and $\dd A^{\blue{i_j}}$ otherwise. Using again the definition of $\Psi^{-1}$ in \eqref{eq:implicit_psi}, one can show that the resulting object necessarily lies in the group; see also \cite[Theorem 2.6]{bonesini2024rough}.
\end{remark}

Finally, we end this section with a connection of the symmetrized $\Psi$-transform to so-called partial rough path spaces introduced in \cite{fukasawa2024partial}, also in the context of rough volatility models.
\begin{remark}\label{rem:partial_rough_paths}
In Remarks~\ref{rem:partial_stochastic_lift} and~\ref{rem:joint_lifts_rvol},  we illustrated how, given a rough path $\mathbf{X}$ together with a possibly correlated semimartingale $A$, one can construct a full joint rough path lift for $(A,X)$. In the context of rough volatility, $\mathbf{X}$ represents a rough path lift of the volatility process $\sqrt{V}$, whereas $A$ represents the correlated driver of the (log-)price dynamics. Constructing a rough path lift of the volatility process may, however, be delicate: the process can have very low regularity, and a canonical enhancement may not be available.

To circumvent this difficulty, the authors of~\cite{fukasawa2024partial} introduce a partial rough path space that does not require a lift above $V$, but nevertheless contains sufficient information to construct and expand the relevant equations. This construction is closely related to the symmetrized $\Psi$-coordinates, which we illustrate now. Written in notation closer to ours, a partial rough path in the sense of~\cite[Definition~2.1]{fukasawa2024partial} is a triplet $\mathbf{Y}^{\text{par}}=\bigl(X,(A^{(i)})_i,(\mathbb A^{(j,k)})_{j,k}\bigr)$
where $X$ is of $\beta$-Hölder regularity (the volatility process, $\beta$ typically small), while $A^{(0)}=A$ is $\alpha$-Hölder with $\alpha \in (1/3,1/2)$ (the price dynamics, typically Brownian regularity). The remaining coordinates are supposed to postulate the values 
\[
A^{(i)}_{s,t}
=
\frac{1}{i!}
\int_s^t X_{s,r}^{i}\dd A_r,
\qquad
\mathbb A_{s,t}^{(j,k)}
=
\frac{1}{j!k!}
\int_s^t\int_s^u
X_{s,r}^{j}X_{s,u}^{k}
\dd A_r\otimes\dd A_u, \qquad i,j,k \in \mathbb{N}^d
\]
where the required multi-index length depends on the path-regularities, i.e.  $|i|\beta+\alpha \leq 1$ and $|j+k|\beta + 2\alpha \leq 1$.
As in the previous discussions in this section, if $A$ is a
semimartingale, then all the components of $\mathbf{Y}^{\mathrm{par}}$
are well defined by means of stochastic integration. In view of the symmetrized $\Psi$-coordinates in~\eqref{def:gamma_signature_sym}, we can observe that
\[
\mathbf{Y}^{par}
=
\left(
\left(
    \left\langle
        \orange{k},
        \Sigsym{A,X}
    \right\rangle_\Psi
\right)_{\orange{k}\in\mathcal{A}''},
\left(
    \left\langle
        [\orange{w}]\blue{i},
        \Sigsym{A,X}
    \right\rangle_\Psi
\right)_{
    \substack{
        \orange{w}\in\mathcal{W}_{\mathcal{A}''},\\
        \blue{i}\in\mathcal{A}'
    }
},
\left(
    \left\langle
        [\orange{w}]\blue{i}
        [\orange{v}]\blue{j},
        \Sigsym{A,X}
    \right\rangle_\Psi
\right)_{
    \substack{
        \orange{w},\orange{v}\in
        \mathcal{W}_{\mathcal{A}''},\\
        \blue{i},\blue{j}\in\mathcal{A}'
    }
}
\right),
\]
subject to the same regularity-dependent length restrictions on the words
$\orange{w}$ and $\orange{v}$ as seen for the multi-indices in \cite[Definition 2.1]{fukasawa2024partial}. Thus, $\mathbf{Y}^{par}$ may be viewed as a coordinate
projection of the truncated $\wh{\Psi}$-coordinates,
retaining precisely the components required to construct the relevant
rough path lifts in rough volatility. Conversely, adding the
remaining coordinates of the truncated $\wh{\Psi}$-coordinates yields
an algebraic completion of the partial rough paths introduced
in~\cite{fukasawa2024partial}: the enlarged collection is closed under
the product, see the preceding
sections.

\end{remark}


\section{Expected signature transforms and moment-problems}\label{sec:expected_signature}
In this section we leverage the signature representations in $\Psi$- and $\wh{\Psi}$-coordinates from in Section \ref{sec:sig_transform}, to derive formulas for the expected signatures  above stochastic processes. More precisely, suppose $(\mathbf{X}_t)_{t\in [0,T]}$ is a random $p$-rough path, adapted to some complete filtered probability space $(\Omega,\mathcal{F},\mathbb{F}=(\mathcal{F}_t)_{t\in [0,T]},\mathbb{P})$. Given a deterministic path $A \in C^{q-var}([0,T],\mathbb{R}^d)$ with $\frac1q+\frac1p>1$, the aim of the section is to characterize (whenever well-defined) \begin{equation}\label{eq:expected_signature} \bmu_{s,t}:= \mathbb{E}[\Sig{A,\RP{X}}_{s,t}] \quad \text{and} \quad \widehat{\bmu}_{s,t}:=\mathbb{E}[\widehat{\mathrm{Sig}}(A,X)_{s,t}], \qquad  (s,t) \in \Delta_{0,T}^{2}.
\end{equation} 
The typical example relevant to have in mind is a one-dimensional augmentation $A:[0,T]\rightarrow \mathbb{R}$, such as $A_t=t$, and semi-martingale or Gaussian drivers $X$ with canonical rough path lifts $\mathbf{X}$. The following result is an immediate consequence of linearity of the expectation and the transformation $\Psi$ from Section~\ref{sec:sig_transform}.

\begin{proposition}\label{prop:exp_sig_prop}
    Whenever well-defined, we have  \begin{equation}\label{eq:expected_gamma}
        \langle w,\bmu_{s,t}\rangle_\Psi =\int_{\Delta^k_{s,t}} \mathcal{C}^w_{s,t}(t_1,\dots,t_k) \dd A^{\blue{i_1}}_{t_1}\cdots \dd A^{\blue{i_k}}_{t_k} \quad \text{and} \quad \langle [w],\wh \bmu_{s,t}\rangle_\Psi =\int_{\Delta^k_{s,t}} \wh{\mathcal{C}}^{[w]}_{s,t}(t_1,\dots,t_k) \dd A^{\blue{i_1}}_{t_1}\cdots \dd A^{\blue{i_k}}_{t_k}
    \end{equation}
    where $((w_0, \dots, w_{k}), \blue{j_1 \cdots j_k}) = \decomp{\mathcal{A}^\prime}(w)$, and the correlators $\mathcal{C},\wh{\mathcal{C}}$ are defined by \begin{equation}\label{eq:correlators_prop} \mathcal{C}^w_{s,t}(\mathbf{t})=\mathbb{E}\Big[  \prod_{j=1}^{k+1} \langle w_{j-1},\Sig{\RP{X}}_{s,t_j}\rangle \Big ], \qquad \wh{ \mathcal{C}}^{[w]}_{s,t}(\mathbf{t})=\mathbb{E}\Big [\prod_{j=1}^{k+1}\frac{1}{[w_{j-1}]!}X_{s,t_j}^{[w_{j-1}]} \Big ] \qquad t_{k+1}:=t,
    \end{equation} where we used the notation $\mathbf{t}=(t_1,\dots,t_k)$.
\end{proposition}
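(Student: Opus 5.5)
The plan is to combine the pathwise $\Psi$-representations with linearity of expectation and Fubini's theorem. The condition ``whenever well-defined'' will be read as integrability of the relevant signature coordinates, finiteness of the correlators, and enough joint measurability and integrability to exchange $\mathbb{E}$ with the $\dd A$-integrals.

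First, I fix a word $w$ with $\decomp{\mathcal{A}'}(w)=((w_0,\dots,w_k),\blue{i_1}\cdots\blue{i_k})$. Since $\Psi$ is graded, $\Psi_w$ is a finite linear combination of words, so linearity of $\mathbb{E}$ gives
\[
\langle w,\bmu_{s,t}\rangle_\Psi=\langle \Psi_w,\mathbb{E}[\Sig{A,\RP{X}}_{s,t}]\rangle=\mathbb{E}\big[\langle w,\Sig{A,\RP{X}}_{s,t}\rangle_\Psi\big].
\]
By Corollary~\ref{cor:rough_gamma}, which rests on Proposition~\ref{prop:psi_inv}, the integrand equals pathwise
\[
\langle w_k,\Sig{\RP{X}}_{s,t}\rangle\int_{\Delta^k_{s,t}}\prod_{j=1}^k\langle w_{j-1},\Sig{\RP{X}}_{s,t_j}\rangle\,\dd A^{\blue{i_j}}_{t_j}.
\]
Because $A$ is deterministic, the factor $\langle w_k,\Sig{\RP{X}}_{s,t}\rangle$ can be moved inside the iterated integral. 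This gives a single iterated Young integral of the random continuous function
\[
F(\mathbf{t})=\prod_{j=1}^{k+1}\langle w_{j-1},\Sig{\RP{X}}_{s,t_j}\rangle,\qquad t_{k+1}=t.
\]

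The main step, and the only real obstacle, is exchanging $\mathbb{E}$ with this integral. If $A$ has bounded variation, the iterated integral is a Lebesgue--Stieltjes integral against the finite signed product measure $\dd A^{\blue{i_1}}\otimes\cdots\otimes\dd A^{\blue{i_k}}$ restricted to the simplex. Fubini's theorem then applies as soon as $\mathbb{E}\big[\sup_{\mathbf{t}}|F(\mathbf{t})|\big]<\infty$, which is the content of ``well-defined''. The result is $\int_{\Delta^k_{s,t}}\mathcal{C}^w_{s,t}(\mathbf{t})\,\dd A^{\blue{i_1}}_{t_1}\cdots\dd A^{\blue{i_k}}_{t_k}$.

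For genuinely Young $A$, I would instead approximate the iterated integral by Riemann sums over nested partitions. $\mathbb{E}$ commutes with each finite sum. Passing to the limit requires $L^1$-convergence of the random Riemann sums, which follows from the Young--Lo\`eve estimate once $\mathbb{E}$ of the relevant $p$-variation norms of $\mathbf{t}\mapsto F(\mathbf{t})$ is finite. The deterministic sums converge to the Young integral of $\mathcal{C}^w_{s,t}$, since $\mathcal{C}^w_{s,t}$ inherits finite $p$-variation by Jensen's inequality. I would state these integrability conditions explicitly as the meaning of ``whenever well-defined''.

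The symmetrized identity follows the same steps with Theorem~\ref{thm:partial_symmetrized_gamma} in place of Proposition~\ref{prop:psi_inv}. By \eqref{eq:quotient_induced_pairing}, $\langle [w],\wh\bmu_{s,t}\rangle_\Psi=\mathbb{E}\big[\langle [w],\Sigsym{A,X}_{s,t}\rangle_\Psi\big]$. Formula \eqref{def:gamma_signature_sym} (extended to continuous $X$ by the same Young-integration remark) expresses this expectation as an iterated $\dd A$-integral of $\prod_{j}\frac{1}{[w_{j-1}]!}X^{[w_{j-1}]}_{s,t_j}$. The same Fubini or Riemann-sum exchange then yields the formula with $\wh{\mathcal{C}}^{[w]}_{s,t}$.
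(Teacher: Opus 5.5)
Your proposal is correct and takes the same route as the paper, which presents the result as an immediate consequence of linearity of expectation applied to the pathwise $\Psi$-representation (Proposition~\ref{prop:psi_inv} and Corollary~\ref{cor:rough_gamma}, and Theorem~\ref{thm:partial_symmetrized_gamma} for the symmetrized case), followed by moving $\mathbb{E}$ inside the deterministic $\dd A$-integrals. Your explicit Fubini argument for bounded-variation $A$ and your Riemann-sum/Young--Lo\`eve argument for Young $A$ make precise what the paper leaves under ``whenever well-defined''. One caution for the Young case: the correlator is a genuinely multi-parameter function rather than a product, so the $p$-variation you transfer via Jensen must be the rectangular (multi-dimensional) variation, not the one-dimensional one.
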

The main advantage of this representation for the expected-signature transform
is that the problems of taking expectations and integrating against the
deterministic components \(A\) are disentangled. In the sequel we focus on
the symmetrized correlators \(\wh{\mathcal C}\) (resp. $|\mathcal{A}''|=1$). As we demonstrate in the
examples below, these deterministic correlators are often available in closed
form and exhibit substantially higher regularity than the underlying noise
\(X\). One can take advantage of this smoothing effect by using efficient
numerical integration techniques to compute the deterministic integrals in
\eqref{eq:expected_gamma}; see also
Section~\ref{sec:numerics} below. 

\begin{example}\label{ex:Gaussian_isserlis}
    Suppose $X$ is a  centered Gaussian processes with covariance function given by $R(s,t)=\mathbb{E}[X_s\otimes X_t]$, and $X_0=0$. In this case, an application of the Isserlis (or Wick) formula to the
    symmetrized correlators in \eqref{eq:correlators_prop} gives \[
    \wh{\mathcal{C}}^w_{0,T}(\mathbf{t})=\begin{cases}
        \prod_{r=1}^{{k+1}}\frac{1}{|w_{r-1}|!}\sum_{\sigma \in \mathcal{P}_2^w}\prod_{(a,b)=(w_{i}^{n},w_{j}^{m})\in \sigma}R^{a,b}(t_n,t_m), & |w_0|+\cdots + |w_k| \in 2\mathbb{N} \\ 0, & \text{else}
    \end{cases},
    \] where we recall the convention $t_{k+1}=t$, and  $\mathcal{P}_2^w$ denotes the set of all distinct ways to partition $\{w_j^l:1\leq j \leq k, \, 1 \leq l \leq |w_j| \}$ into pairs $(w_i^m,w_j^m)$. Thus, in this case computing the symmetrized expected
    signature reduces to the evaluation of deterministic integrals of the form $$\int_{\Delta_{0,T}^{k}}\prod_{(a,b) \in \sigma }R^{a,b}(t_{a(i)},t_{a(j)}) \dd A^{\blue{i_1}}_{t_1}\cdots \dd A^{\blue{i_k}}_{t_k}, \qquad t_{k+1}:=t.$$
\end{example}

\begin{example}\label{ex:polynomial} Another class of interest is given by polynomial diffusions
    \cite{cuchiero2012polynomial,filipovic2016polynomial}, that is,
    solutions to SDEs of the form $$\dd X_t=b(X_t)\dd t + \sigma(X_t) \dd B_t, \qquad b \in \mathrm{Pol}_1, \quad \sigma\sigma^\top \mathrm{Pol}_2.$$ as well as various generalizations thereof
    \cite{filipovic2020polynomial,cuchiero2019probability,
    cuchiero2021infinite,cuchiero2023signature,abi2024polynomial,
    cuchiero2025polynomial}. Such processes are characterized by
    semi-explicit moment transforms. For the diffusion case considered here,
    \cite{benth2021correlators} presents closed-form formulas for correlators
    of the type $$\wh{\mathcal{C}}^w_{0,T}(\mathbf{t})
        =
        e_k^\top A(X_0)e^{\tilde{G}^{(m)}T}
        \prod_{l=1}^{k}
        e^{\tilde{G}^{(k-l)}(t_l-t_{l-1})}
        \{I_l\otimes e_{k-l}\}$$ where $A$ and $\tilde{G}^{l}$ can be constructed explicitly from the
    generator of the diffusion; see \cite[Theorem 4.5]{benth2021correlators}
    for details. In particular, in view of our expected-signature transform,
    this leaves us with the evaluation of deterministic integrals of the form $$\int_{\Delta_{0,T}^k}\prod_{l=1}^{k+1}e^{\tilde{G}^{(k-l)}(t_{l}-t_{l-1})}\dd A^{\blue{i_1}}_{t_1}\cdots \dd A^{\blue{i_k}}_{t_k}.$$

\end{example}
The two examples above illustrate that, after applying the expected-signature
transform, the remaining computational task is the evaluation of deterministic
simplex integrals of the form $\int_{\Delta_{0,T}^k}
    \Xi(T,t_1,\ldots,t_k)
    \dd A^{\blue{i_1}}_{t_1}\cdots
    \dd A^{\blue{i_k}}_{t_k}$ for some given integrand $\Xi$. In many relevant cases, these correlators are
substantially more regular than the sample paths of $X$ itself, which opens
the door to accurate numerical integration techniques, such as quadrature.

\subsection{Moment determinacy for smooth components}
The expected signature is of particular importance in applications, since in many situations it characterizes the law of the underlying stochastic process. When this \emph{moment-determinacy} property holds, the expected signature provides a natural and computationally tractable way of comparing probability measures on path space. However, establishing that the expected signature is indeed law-determining is a highly non-trivial problem. One route is through the duality between universality and characteristicness; see, for instance, \cite[Theorem~7]{chevyrev2018signature}. In concrete applications, universality is often established by Stone-Weierstrass type arguments, which naturally yield results on compact subsets of path space. Global law-determinacy of the expected signature was studied in \cite{chevyrev2016characteristic}, where sufficient conditions are given in terms of an infinite radius of convergence of the expected signature. This condition is satisfied, for example, by several classes of Gaussian rough paths \cite[Example~6.7]{chevyrev2016characteristic}.

The goal of this section is to show that, for sufficiently rich deterministic
augmentations $A$ (in the sense of Assumption~\ref{ass:mambo}), the expected signature $\boldsymbol\mu$, and likewise its
partially symmetrized version $\widehat{\boldsymbol\mu}$, characterize the law
of the underlying path $X$. The argument relies on  classical moment-determinance of the one-dimensional marginals of $X$. More precisely, writing $\mu_{X_t^i}:=\operatorname{Law}(X_t^i)$,  we assume that, for every $t\in[0,T]$ and $i\in\{1,\dots,d\}$, the measure
$\mu_{X_t^i}$ is moment-determinate, in the sense that
\begin{equation}\label{eq:moment-deter-real}
	\left[
	\nu\in\mathcal P(\mathbb R),\quad
	\int_{\mathbb R} x^m\,\nu(\dd x)
	=
	\int_{\mathbb R} x^m\,\mu_{X_t^i}(\dd x)
	\ \text{ for all } m\geq 0
	\right]
	\quad\Longrightarrow\quad
	\nu=\mu_{X_t^i}.
\end{equation}
Here $\mathcal P(\mathbb R)$ denotes the set of probability measures on
$\mathbb R$. By Petersen's theorem \cite[Theorem~3]{petersen1982relation},
assumption \eqref{eq:moment-deter-real} implies that, for every
$0\leq t_1\leq\cdots\leq t_p\leq T$, $p\geq1$, the finite-dimensional law
of $(X_{t_1},\dots,X_{t_p})$ is uniquely determined by its multivariate moments. Consequently, the collection
of correlators $\widehat{\mathcal C}$ defined in
\eqref{eq:correlators_prop}, when known for all time-tuples and words, determines all finite-dimensional distributions of $X$.

As a first result, we show that the symmetrized expected signature representation
from Proposition~\ref{prop:exp_sig_prop}, together with the previous observations,
yields the desired characteristicness. To this end, fix a deterministic
augmentation $A$. For a probability measure
$\mathbb P\in\mathcal P(C_o([0,T];\mathbb R^d))$, where $C_o$ denotes the space of continuous paths starting at some fixed $X_0=x_o$, we denote by
$X$ the canonical process on $C_o([0,T];\mathbb R^d)$ and write
$\widehat{\boldsymbol\mu}^{\mathbb P}:=\widehat{\boldsymbol\mu}^{\mathbb P}(A)$ for the corresponding symmetrized
expected signature, whenever it is
well-defined. More precisely, we define
\begin{equation}\label{eq:path_measures_gamma}
	\mathcal P_A
	:=
	\left\{
	\mathbb P\in\mathcal P(C_o([0,T];\mathbb R^d)) :
	\left|
	\langle w, \wh{\bmu}^\mathbb{P}_{0,T}(A) \rangle 
	\right|<\infty
	\text{ for all words } w
	\right\}.
\end{equation}
We can now state the first main result of this section.
\begin{proposition}\label{prop:moment_determinisme}
    Consider a deterministic path $A=(A_t)_{t\in [0,T]}$ satisfying Assumption~\ref{ass:mambo} and let $\mathbb{P}\in \mathcal{P}_A$, such that the one-dimensional marginals of the canonical process satisfy \eqref{eq:moment-deter-real}. Then \begin{equation}\label{eq:moment_determ}
        \wh{\bmu}^\mathbb{P}_{0,T}(A) =\wh{\bmu}^\mathbb{Q}_{0,T}(A) \quad \Leftrightarrow \quad \mathbb{P}=\mathbb{Q}, \qquad \forall \,  \mathbb{Q} \in \mathcal{P}_A.
    \end{equation}
\end{proposition}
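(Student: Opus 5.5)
The direction $\mathbb P=\mathbb Q\Rightarrow\wh{\bmu}^\mathbb{P}=\wh{\bmu}^\mathbb{Q}$ is trivial, since $\wh{\bmu}$ is a functional of the law. For the converse, I would pass to $\wh\Psi$-coordinates and show that equality of these coordinates forces equality of all correlators $\wh{\mathcal C}^{[w]}_{0,T}(t_1,\dots,t_k)$ at every time tuple. Petersen's theorem then gives equality of all finite-dimensional distributions, hence $\mathbb P=\mathbb Q$ on $C_o$. By Proposition~\ref{prop:partial_symmetrized_hopf}, $\wh\Psi$ is a graded automorphism, so $\wh{\bmu}^\mathbb{P}=\wh{\bmu}^\mathbb{Q}$ is equivalent to $\langle [w],\wh{\bmu}^\mathbb{P}\rangle_\Psi=\langle [w],\wh{\bmu}^\mathbb{Q}\rangle_\Psi$ for all $w$. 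By Proposition~\ref{prop:exp_sig_prop}, these coordinates are $\int_{\Delta^k_{0,T}}\wh{\mathcal C}^{[w]}_{0,T}\,\dd A^{\blue{i_1}}\cdots\dd A^{\blue{i_k}}$. Finiteness is guaranteed by $\mathbb P,\mathbb Q\in\mathcal P_A$, and the interchange of expectation and integral is justified by Fubini, since every word of $\mathcal P_A$ has finite coordinate.

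The key step is to peel off the time variables one at a time using the words supplied by Assumption~\ref{ass:mambo}. Fix blocks $w_0,\dots,w_k\in\mathcal W_{\mathcal A''}$ and letters $\blue{i_1},\dots,\blue{i_k}$. Consider the word $[w_0]\blue{i_1}[w_1]\cdots\blue{i_k}[w_k]\,\blue{u}$ with an appended $\mathcal A'$-word $\blue{u}=w_n$ from the sequence $\mathbf w$ and empty trailing blocks. Integrating out the trailing variables with the Cauchy-type formula \eqref{eq:cauchy_formula} used in the proof of Theorem~\ref{thm:partial_symmetrized_gamma} turns the coordinate into
\[
-\int_0^T G(t)\,\partial_t\langle w_n,\Sig{A}_{t,T}\rangle\,\dd t,
\qquad G(t):=\int_{\Delta^k_{0,t}}\wh{\mathcal C}^{[w]}_{0,t}(t_1,\dots,t_k)\,\dd A^{\blue{i_1}}_{t_1}\cdots\dd A^{\blue{i_k}}_{t_k}.
\]
Here the last block $[w_k]$ is evaluated at the time $t$ of the first appended letter, so $G(t)$ is the $\wh\Psi$-coordinate over $[0,t]$. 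Totality of the family in $L^2([0,T])$ then gives $G^\mathbb{P}=G^\mathbb{Q}$ a.e. The problem is that this yields only equality of the coordinates on every $[0,t]$, which is the same type of statement one level lower.

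To iterate, I would use that $G(t)$ is itself of the form $\int_0^t H(t_k,t)\,\dd A^{\blue{i_k}}_{t_k}$. Inserting further appended $\mathcal A'$-words between the blocks lets me vary the endpoint of each block independently, applying the totality argument on each $[0,t_0]$ successively; this is why Assumption~\ref{ass:mambo} is required for every $t_0$. This is the main obstacle: the variables $t_j$ enter both through the correlator and through the measure $\dd A^{\blue{i_j}}$. I would circumvent it by choosing $\blue{i_j}$ themselves from the time-like structure. Concretely, I would test only with words of the form $[w_0]\,w_{n_1}[w_1]\,w_{n_2}\cdots w_{n_k}[w_k]$, whose coordinates are
\[
\int \wh{\mathcal C}^{[w]}_{0,T}(s_1,\dots,s_k)\prod_j \partial_{s_j}\langle w_{n_j},\Sig{A}_{s_j,s_{j+1}}\rangle\,\dd s
\]
up to sign, after nested applications of \eqref{eq:cauchy_formula}. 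Density of products of total families in $L^2$ of the simplex, obtained inductively via the successive-endpoint structure, would give $\wh{\mathcal C}^\mathbb{P}=\wh{\mathcal C}^\mathbb{Q}$ a.e. on the simplex.

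Finally, I would upgrade from a.e. to every time tuple. The correlators are continuous in the time variables under continuity of paths plus uniform integrability; the finiteness in $\mathcal P_A$ may need strengthening via Hölder's inequality on moments, and this is where I would add a remark if necessary. Combined with $X_0=x_o$, continuity yields equality of all mixed moments of $(X_{t_1},\dots,X_{t_{k+1}})$. By \eqref{eq:moment-deter-real} and Petersen's theorem, the finite-dimensional laws coincide, and hence so do the laws on $C_o$.
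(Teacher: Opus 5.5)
Your final route is exactly the paper's argument: you test with words $[w_0]\,w_{n_1}[w_1]\cdots w_{n_k}[w_k]$ whose $\mathcal A'$-parts come from the total family, collapse them via \eqref{eq:cauchy_formula} into the kernel representation \eqref{eq:mj_identity}, peel off $t_1$ over $[0,t_2]$ inductively using totality for every $t_0$, and conclude with Petersen's theorem; the opening detour with appended words can simply be dropped. The paper passes over your a.e.-to-every-tuple upgrade, and that step needs neither continuity of the correlators nor integrability beyond $\mathcal P_A$: the tuples where all (countably many) correlators agree form a full-measure, hence dense, subset of each simplex; Petersen gives equality of the finite-dimensional laws there; and path continuity gives a.s.\ convergence along approximating tuples, hence equality of the finite-dimensional laws at every tuple.
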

\begin{proof}
    The direction $\Leftarrow$ is straightfoward. In order to prove $\Rightarrow$, from the discussion previous to the statement, we know that it suffices to show that the correlators $\wh{\mathcal{C}}^\mathbb{P}$ and $\wh{\mathcal{C}}^\mathbb{Q}$ coincide, so that we can concude using \eqref{eq:moment-deter-real} and  \cite[Theorem 3]{petersen1982relation}. By Proposition~\ref{prop:exp_sig_prop} we also know $\langle [w],\wh{\bmu}_{0,T}^\mathbb{Q}\rangle_w=\langle [w],\wh{\bmu}_{0,T}^\mathbb{P}\rangle_\Psi$ for all words $w\in \mathcal{W}$. Next we claim that for any word of the form $w=  w_1 \blue{j^1_1\cdots j^1_{k_1}}\cdots w_{n} \blue{j^n_1\cdots j^n_{k_n}}$ with $w_l\in \mathcal{W}_{\mathcal{A}''}$ with $w_l \neq \varnothing$ and $j^l_{j_l} \in \mathcal{A}'$, we have \begin{equation}\label{eq:mj_identity}
    	\langle [w],\wh{\bmu}_{0,T}\rangle_\Psi = (-1)^n\int_{\Delta_{0,T}^n} \wh{\mathcal{C}}^{[w]}_{0,T}(t_1,\dots,t_n)\prod_{l=1}^n \partial_{t_l}\langle \blue{j^l_{1}\cdots j^l_{k_l}}, \Sig{A}_{t_l,t_{l+1}}\rangle \dd t_{1}\cdots \dd t_{n}, \qquad t_{n+1}:=T.\end{equation}
     Similar to the injectivity proof in Theorem~\ref{thm:partial_symmetrized_gamma}, we apply \eqref{eq:cauchy_formula} but repeatedly to the $\Psi$-coordinate representation in  \eqref{eq:expected_gamma}, which shows \eqref{eq:mj_identity}. By assumption, we in particular find \begin{equation}\label{eq:orthogonality}
    0 = (-1)^n\int_{\Delta_{0,T}^n} \Big \{ \wh{\mathcal{C}}_\mathbb{P}^{[w]}(t_1,\dots,t_n)-\wh{\mathcal{C}}_\mathbb{Q}^{[w]}(t_1,\dots,t_n) \Big \}\prod_{l=1}^n \partial_{t_l}\langle \blue{v_l}, \Sig{A}_{t_l,t_{l+1}}\rangle \dd t_{1}\cdots \dd t_{n},
    \end{equation} for all $w_1,\dots,w_n \in \mathcal{W}_{\mathcal{A}''}$, $\blue{v_1},\dots,\blue{v_n} \in \mathcal{W}_{\mathcal{A}'}$ and $n\in \mathbb{N}$. It remains to show that \eqref{eq:orthogonality} implies $\wh{\mathcal{C}}^{[w]}_\mathbb{P}=\wh{\mathcal{C}}^{[w]}_\mathbb{Q}$ almost everywhere for any $w_1,\dots,w_n$. We will show more generally by induction over $n\in \mathbb{N}$, it holds that   \begin{equation}\label{eq:induction_orthogonality}
   0 = \int_{\Delta_{0,T}^n} F(t_1,\dots,t_n)\prod_{l=1}^n \partial_{t_l}\langle \blue{v_l}, \Sig{A}_{t_l,t_{l+1}}\rangle \dd t_{1}\cdots \dd t_{n}, \, \forall \blue{v_l} \in \mathcal{W}_{\mathcal{A}'} \, \Longrightarrow \,  F=0 \text{ a.e.}
    \end{equation} For $n=1$, we have $$0=\int_0^TF(t)\partial_t \langle \blue{v}, \Sig{A}_{t,T}\rangle \dd t, \qquad \forall \blue{v} \in \mathcal{W}_{\mathcal{A}'}.$$  But then function $F$ in particular lies in the orthogonal complement of the set \eqref{eq:dense_familiy}, and is thus equal to zero almost everywhere by Assumption~\ref{ass:mambo}. Now assume that \eqref{eq:induction_orthogonality} holds for $n-1$, then \begin{align*}
     0 & = \int_{\Delta_{0,T}^n} F(t_1,\dots,t_n)\prod_{l=1}^n \partial_{t_l}\langle \blue{v_l}, \Sig{A}_{t_l,t_{l+1}}\rangle \dd t_{1}\cdots \dd t_{n} \\ & = \int_{\Delta_{0,T}^{n-1}} \Big \{ \int_0^{t_2}F(t_1,\dots,t_n)\partial_{t_1} \langle \blue{v_1},\Sig{A}_{t_1,t_2}\rangle \dd t_1 \Big \}\prod_{l=2}^n \partial_{t_l}\langle \blue{v_l}, \Sig{A}_{t_l,t_{l+1}}\rangle \dd t_{2}\cdots \dd t_{n}.
    \end{align*} By the induction hypothesis we have $$\int_0^{t_2}F(t_1,\dots,t_n)\partial_{t_1} \langle \blue{v_1},\Sig{A}_{t_1,t_2}\rangle \dd t_1 = 0 \text{ a.e. and for all } \blue{v_1} \in \mathcal{W}_{\mathcal{A}'},$$ which again by the $n=1$ case shows $F(\cdot,t_2,\dots,t_n)=0$ almost everywhere. This proves the claim \eqref{eq:induction_orthogonality}, and choosing $F=\wh{\mathcal{C}}^{[w]}_\mathbb{P}-\wh{\mathcal{C}}^{[w]}_\mathbb{Q}$ finishes the proof.
\end{proof}

From the moment-determinacy of the symmetrized expected signature we can easily
deduce the corresponding statement for the full expected signature. Indeed, let
$\pi_{\mathrm{Sym}}(\cdot)=\symmetrizer$ denote the partial symmetrization map
from Section~\ref{sec:prelim_quo}. Since $\widehat{\boldsymbol\mu}
=
\pi_{\mathrm{Sym}}\big(\boldsymbol\mu\big)$,
we have, whenever the involved quantities are well-defined,
\[
\langle w, \boldsymbol\mu^{\mathbb P}\rangle
=
\langle w, \boldsymbol\mu^{\mathbb Q}\rangle
 \quad \forall w\in \mathcal{W} \qquad \Longrightarrow \langle [w], \wh{\bmu}^\mathbb{P}_{0,T}\rangle_\Psi=\langle [w], \wh{\bmu}^\mathbb{Q}_{0,T}\rangle_\Psi \quad \forall w \in \mathcal{W}.
\]
Thus Proposition~\ref{prop:moment_determinisme} applies. To make this formal for rough path laws, define
\begin{equation}\label{eq:laws_RP}
	\mathscr{P}_A^p
	:=
	\left\{
	\mathbb{P}\in\mathcal{P}
	\big(\mathscr{C}^{p\text{-var}}_g([0,T];\mathbb{R}^d)\big):
	|\langle w,\boldsymbol\mu_{0,T}^{\mathbb P}(A)\rangle|<\infty
	\text{ for all words } w
	\right\}.
\end{equation}
We denote by $\pi_1{}_\#\mathbb P$ the push-forward of
$\mathbb P\in\mathscr P_A^p$ under the first-level projection $\pi_1$.
\begin{corollary}\label{cor:full_exp_sig_first_level}
	Suppose $A$ is as in Proposition~\ref{prop:moment_determinisme}. Let
	$\mathbb P\in\mathscr P_A^p$ be such that the one-dimensional marginals of
	$\pi_1{}_\#\mathbb P$ satisfy the moment-determinacy condition
	\eqref{eq:moment-deter-real}. Then, for every $\mathbb Q\in\mathscr P_A^p$,
	\[
	\boldsymbol\mu_{0,T}^{\mathbb P}(A)
	=
	\boldsymbol\mu_{0,T}^{\mathbb Q}(A)
	\quad\Longrightarrow\quad
	\pi_1{}_\#\mathbb P
	=
	\pi_1{}_\#\mathbb Q .
	\]
\end{corollary}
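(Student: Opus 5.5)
The plan is to reduce to Proposition~\ref{prop:moment_determinisme}, applied to the first-level laws $\pi_1{}_\#\mathbb P$ and $\pi_1{}_\#\mathbb Q$ on $C_o([0,T];\mathbb R^d)$. All rough paths here have the same fixed starting point $x_o$, so these pushforwards indeed live on $C_o$.

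\emph{Step 1: pass to symmetrized coordinates.} Assume $\boldsymbol\mu_{0,T}^{\mathbb P}(A)=\boldsymbol\mu_{0,T}^{\mathbb Q}(A)$. The symmetrizer $\symmetrizer$ is linear and commutes with expectation, coordinate by coordinate: by \eqref{eq:quotient_pairing}, each $\langle [w],\cdot\rangle$ is a finite sum of standard coordinates. Hence
\[
\widehat{\boldsymbol\mu}^{\mathbb P}=\symmetrizer\bigl(\boldsymbol\mu^{\mathbb P}\bigr)=\symmetrizer\bigl(\boldsymbol\mu^{\mathbb Q}\bigr)=\widehat{\boldsymbol\mu}^{\mathbb Q}.
\]
Applying $\widehat\Psi$ and \eqref{eq:quotient_induced_pairing}, the $\Psi$-coordinates $\langle [w],\widehat{\boldsymbol\mu}\rangle_\Psi$ also agree.

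\emph{Step 2: the symmetrized signature depends only on $X=\pi_1\mathbf X$.} The point to check is that $\symmetrizer(\Sig{A,\mathbf X})$ coincides with $\Sigsym{A,X}$ as defined through \eqref{def:gamma_signature_sym}. This follows from Corollary~\ref{cor:rough_gamma}: the $\Psi$-coordinates of $\Sig{A,\mathbf X}$ are given by \eqref{def:gamma_sig_rough}. Symmetrizing each $\mathcal A''$-block and using the shuffle identity for the geometric rough path $\Sig{\mathbf X}$ gives
\[
\sum_{[u]=[w_j]}\langle u,\Sig{\mathbf X}_{s,r}\rangle = \frac{X_{s,r}^{[w_j]}}{[w_j]!},
\]
exactly as in \eqref{eq:quotient_signature_moment}. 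Then \eqref{eq:psi_hat_ast_form} shows that the quotient $\Psi$-coordinates are \eqref{def:gamma_signature_sym} with $X$ in place of a smooth path. Consequently,
\[
\widehat{\boldsymbol\mu}^{\mathbb P}_{0,T}(A)=\widehat{\boldsymbol\mu}^{\pi_1{}_\#\mathbb P}_{0,T}(A),
\]
and the same holds for $\mathbb Q$. Finiteness of all coordinates of $\boldsymbol\mu$ gives finiteness of all coordinates of $\widehat{\boldsymbol\mu}$, since each is a finite sum. Therefore $\pi_1{}_\#\mathbb P,\pi_1{}_\#\mathbb Q\in\mathcal P_A$.

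\emph{Step 3: conclude.} Proposition~\ref{prop:moment_determinisme} applies, because the marginals of $\pi_1{}_\#\mathbb P$ are moment-determinate by hypothesis. It yields $\pi_1{}_\#\mathbb P=\pi_1{}_\#\mathbb Q$.

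The main obstacle I expect is Step 2, namely justifying the interchange of expectation with the Young integrals against $\dd A$ inside Proposition~\ref{prop:exp_sig_prop}. One needs, for instance, Fubini under the finiteness encoded in $\mathscr P^p_A$, or an approximation by piecewise-linear $A$. I would handle this by assuming $A\in C^{1\text{-var}}$, as Assumption~\ref{ass:mambo} already requires. Then the integrals are Lebesgue--Stieltjes integrals, and Fubini applies as soon as the correlators are integrable; this is what "well-defined" in $\mathscr P^p_A$ is taken to mean.
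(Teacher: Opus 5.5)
Your proposal is correct and follows the paper's argument. The paper notes that $\widehat{\bmu}=\symmetrizer(\bmu)$, so equality of $\bmu^{\mathbb P}$ and $\bmu^{\mathbb Q}$ forces equality of all $\langle [w],\widehat{\bmu}\rangle_\Psi$, and then it invokes Proposition~\ref{prop:moment_determinisme}. Your Step~2 makes explicit two things the paper leaves implicit: that $\symmetrizer(\Sig{A,\mathbf X})$ depends only on $\pi_1\mathbf X$, via Corollary~\ref{cor:rough_gamma} and the shuffle identity, and that the pushforwards lie in $\mathcal P_A$. The Fubini interchange you flag is not needed for the reduction itself, since it is already absorbed into Proposition~\ref{prop:moment_determinisme}.
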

\begin{remark}
	If the rough path enhancement is fixed as a measurable function of the first
	level, then the preceding conclusion can be strengthened to equality of rough
	path laws. More precisely, suppose there exists a Borel measurable lift $X \mapsto \mathbf{X}=\Lambda(X)$
	such that $\pi_1\circ\Lambda=\operatorname{id}$. Then, under the assumptions of Corollary~\ref{cor:full_exp_sig_first_level}, restricted to laws of random rough paths with such a measurable lift, we have
	\[
	\boldsymbol\mu_{0,T}^{\mathbb P}(A)
	=
	\boldsymbol\mu_{0,T}^{\mathbb Q}(A)
	\quad\Longleftrightarrow\quad
	\mathbb P=\mathbb Q .
	\]
	This applies to canonical rough path lifts which are constructed as measurable
	functions of the first-level path, such as the canonical Stratonovich lift of
	continuous semimartingales and canonical Gaussian rough path lifts.
\end{remark}

\subsection{Moment determinacy for Brownian components}\label{sec:brownain_characteristicness}
The techniques developed in the previous section for proving characteristicness
under sufficiently rich deterministic augmentations can also be adapted to
stochastic augmentations. In this section we consider the case where
$A=(A_t)_{t\in[0,T]}$ is a standard Brownian motion, independent of the process
$X$. In this setting, the role of the deterministic time component is replaced
by the non-trivial expected iterated integrals of Brownian motion. The following
formula for suitable words in the $\Psi$-coordinates allows us to deduce law-determinacy by the same correlator-recovery
argument.
\begin{lemma}\label{lem:expected_sig_bnrownian}
	Let $A$ be a standard Brownian motion, and $\mathbb{P}\in \mathcal{P}_A$ independent of $A$. For any word of the form $w=w_1 \blue{ii}w_2\blue{ii}\cdots w_n\blue{ii}$ where $w_1,\dots,w_n \in \mathcal{W}_{\mathcal{A}''}$ and $\blue{i} \in \mathcal{A}'$, it holds that \begin{equation}\label{eq:brownian_gamma_formula}
		\langle [w], \wh{\bmu}^\mathbb{P}_{s,t} \rangle_\Psi =2^{-n} \int_{\Delta_{s,t}^n} \wh{\mathcal{C}}^{\mathbb{P},[w]}_{s,t}(t_1,\dots,t_n)\dd t_1 \cdots \dd t_n.
	\end{equation}
\end{lemma}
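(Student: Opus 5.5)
Our approach is to apply the pathwise representation \eqref{def:gamma_signature_sym} of \Cref{thm:partial_symmetrized_gamma}, with the $\dd A$-integrals read in the Stratonovich sense (\Cref{rem:partial_stochastic_lift}). We then condition on $X$, exploit its independence from $A$, and evaluate the resulting expected iterated Brownian integrals explicitly.

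For $w=w_1\blue{ii}w_2\blue{ii}\cdots w_n\blue{ii}$ the decomposition $\decomp{\mathcal{A}'}(w)$ has $2n$ letters $\blue{i}$. The $\mathcal{A}''$-blocks are $w_1,\emptyset,w_2,\emptyset,\dots,w_n,\emptyset$ followed by a terminal empty block. Hence \eqref{def:gamma_signature_sym} reads
\[
\big\langle [w],\Sigsym{Y}_{s,t}\big\rangle_\Psi
=\int_{\Delta^{2n}_{s,t}}\prod_{l=1}^{n}\frac{1}{[w_l]!}X^{[w_l]}_{s,u_{2l-1}}\;\circ\dd A^{\blue{i}}_{u_1}\circ\dd A^{\blue{i}}_{u_2}\cdots\circ\dd A^{\blue{i}}_{u_{2n}} .
\]
The integrand depends only on the odd times $u_{2l-1}$. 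Integrating out each even time first, we may rewrite the integral iteratively. At step $l$ we integrate $\int\big(\int \frac{1}{[w_l]!}X^{[w_l]}_{s,u}\, F_{l-1}(u)\circ\dd A_u\big)\circ\dd A_v$, where $F_{l-1}$ denotes the previously built inner iterated integral.

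For the expectation we condition on $\sigma(X)$. By independence, $A$ remains a Brownian motion under the conditional law, and $X$ acts as a deterministic continuous integrand. It therefore suffices to prove, for a deterministic continuous $g$ and a Brownian motion $B$, the identity
\[
\mathbb{E}\Big[\int_{\Delta^{2n}_{s,t}}\prod_{l=1}^n g_l(u_{2l-1})\circ\dd B_{u_1}\cdots\circ\dd B_{u_{2n}}\Big]=2^{-n}\int_{\Delta^n_{s,t}}\prod_{l=1}^n g_l(r_l)\,\dd r_1\cdots\dd r_n .
\]
We prove this by induction on $n$, using the Itô--Stratonovich conversion on the innermost two letters. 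Write $I_m$ for the iterated integral through level $m$. Converting $\int I_{2l-1}\circ\dd B$ to Itô form produces an Itô integral plus the correction $\tfrac12\dd\langle I_{2l-1},B\rangle_u=\tfrac12\, g_l(u)I_{2l-2}(u)\,\dd u$. Each remaining Itô integral is a martingale with zero expectation, once nested correctly. Collapsing every consecutive pair $\blue{ii}$ in this way replaces it by $\tfrac12\dd r$, which yields the factor $2^{-n}$ and the Lebesgue integral over $\Delta^n$. This is the classical computation behind Fawcett's formula, $\mathbb{E}[\Sig{B}]=\exp(\tfrac12\sum e_{ii})$, now carried out with weights $g_l$.

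Taking expectations over $X$ and using Fubini (justified by $\mathbb{P}\in\mathcal{P}_A$ together with the Gaussian moments of $A$) then replaces $\prod_l g_l(r_l)$ by the correlator $\wh{\mathcal{C}}^{\mathbb{P},[w]}_{s,t}(r_1,\dots,r_n)$. Here the terminal block is empty, so the factor evaluated at $t_{n+1}=t$ is $1$. This gives \eqref{eq:brownian_gamma_formula}.

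The main obstacle is rigor in the stochastic step. We must make sense of the Stratonovich iterated integrals with the random integrand $X$ via an enlarged filtration: we take $\mathcal{F}_u=\sigma(X)\vee\sigma(A_r:r\le u)$, under which $A$ stays a Brownian motion by independence. We must also show that all the intermediate Itô integrals are true martingales rather than merely local martingales. For the latter we would first try a localization argument, combined with $L^2$ bounds coming from the finiteness of the correlators and the Brownian moments.
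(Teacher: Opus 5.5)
Your argument is correct and follows the paper's skeleton: the Stratonovich form of \eqref{def:gamma_signature_sym}, independence of $X$ and $A$, and a Stratonovich-to-It\^o conversion on each pair $\blue{ii}$, which turns the pair into $\tfrac12\,\dd r$ while the It\^o parts have zero mean. The difference is the order of conditioning.

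The paper conditions on $\mathcal{F}^A_T$ and pulls $\mathbb{E}[\,\cdot\mid\mathcal{F}^A_T]$ inside the $2n$-fold Stratonovich integral. This replaces the random kernel $\prod_l X^{[w_l]}_{s,t_{2l-1}}$ at once by the deterministic correlator, and the Fawcett-type reduction is then run on a deterministic integrand. That is short, but the interchange is a stochastic Fubini step which the paper does not spell out.

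You instead freeze $X$, do the Brownian computation with deterministic continuous weights $g_l$, and only then average over $X$ by an ordinary Fubini on $\Omega\times\Delta^n_{s,t}$. This is easier to make rigorous. Conditionally on $X$, every $I_m$ lies in a finite Wiener chaos of $A$ with bounded kernels, so the It\^o integrals are true $L^2$-martingales. No bounds on $X$ are needed for that step; localizing on $\sigma(X)$-measurable sets suffices.

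Two precisions:
\begin{itemize}
\item The induction closes most cleanly as the recursion $\mathbb{E}[I_{2l}(v)\mid X]=\tfrac12\int_s^v g_l(u)\,\mathbb{E}[I_{2l-2}(u)\mid X]\,\dd u$, converting only the outer integral of each pair. This never needs the nonzero Stratonovich drift $\tfrac12 g_l I_{2l-3}\,\dd u$ of the odd-level integrals ($l\geq 2$), since $\langle I_{2l-1},A\rangle$ sees only their martingale part.
\item The Fubini step needs $\int_{\Delta^n_{s,t}}\mathbb{E}\bigl|\prod_l X^{[w_l]}_{s,r_l}\bigr|\,\dd r<\infty$. This does not come from Gaussian moments of $A$. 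It comes from integrability of the coordinate of $w_1w_1\blue{ii}\cdots w_nw_n\blue{ii}$, guaranteed by $\mathbb{P}\in\mathcal{P}_A$. By your own identity, the conditional expectation of that coordinate given $X$ is nonnegative, and Tonelli plus Cauchy--Schwarz then give the bound.
\end{itemize}
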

\begin{proof}
	Let $X$ bet the canonical process with respect to $\mathbb{P}$, and recall from  \eqref{def:gamma_signature_sym} with respect to Stratonovich integration \begin{equation}\langle [w], \wh{\mathrm{Sig}}(A,X) \rangle_\Psi = \frac{1}{[w_k]!}X_{s,t}^{[w_k]}
    \int_{\Delta^{k}_{s,t}}
    \prod_{j=1}^{k}
    \frac{1}{[w_{j-1}]!}X_{s,t_j}^{[w_{j-1}]}
    \circ dA^{\blue{i_j}}_{t_j}.
\end{equation}
for $w\in\mathcal{W}_{\mathcal{A}}$ with $
    \decomp{\mathcal{A}^{\prime}}(w)
    =
    \left(
        (w_0,\ldots,w_k),
        \blue{i_1}\cdots\blue{i_k}
    \right)
    $. Now for the word $w=w_1 \blue{ii}w_2\blue{ii}\cdots w_n\blue{ii}$, an application of the tower-property shows \begin{align*}
    \langle [w], \wh{\bmu}_{0,T}\rangle  & = \mathbb{E}\Big [ \mathbb{E}\Big [\int_{\Delta_{0,T}^{2n}}\prod_{j=1}^{n}
    \frac{1}{[w_{j-1}]!}X_{s,t_{2j-1}}^{[w_{j-1}]}
    \circ \dd A^{\blue{i}}_{t_{2j-1}}\circ \dd A^{\blue{i}}_{t_{2j}} \big | \mathcal{F}^A_T \Big ]\Big ]  \\ & = \mathbb{E}\Big [ \int_{\Delta_{0,T}^{2n}}\mathbb{E}\Big [\prod_{j=1}^{2n}
    \frac{1}{[w_{j-1}]!}X_{s,t_{2j-1}}^{[w_{j-1}]}\big | \mathcal{F}^A_T \Big ]
    \circ \dd A^{\blue{i}}_{t_1} \cdots \circ \dd A^{\blue{i}}_{t_{2n}} \Big ]   \\ & = \mathbb{E}\Big [ \int_{\Delta_{0,T}^{2n}}\wh{\mathcal{C}}_{0,T}^{\mathbb{P},[w]}(t_1,t_3,\dots,t_{2n-1}) \circ \dd A_{t_1}^{\blue{i}} \cdots \circ \dd A_{t_{2n}}^{\blue{i}}\Big ],
    \end{align*} where we used the assumed independence between $X$ and $A$. By definition of the Stratonovich integral, we have \[
\int_{\Delta_{0,T}^{2n}}\wh{\mathcal{C}}_{0,T}^{\mathbb{P},[w]}(t_1,\dots,t_{2n-1}) \circ \dd A_{t_1}^{\blue{i}} \cdots \circ \dd A_{t_{2n}}^{\blue{i}} = \text{MG} + \frac{1}{2}\int_{\Delta_{0,T}^{2n-1}}\wh{\mathcal{C}}_{0,T}^{\mathbb{P},[w]}(t_1,\dots,t_{2n-1}) \circ \dd A_{t_1}^{\blue{i}} \cdots \circ \dd A_{t_{2n-2}}^{\blue{i}} \dd [A^{\blue{i}}]_{t_{2n-1}}.
    \] Taking expectation and using $[A^{\blue{i}}]_t=t$, repeated $n$ times leads to  \[
    \langle w, \wh{\bmu}_{0,T}\rangle_\Psi  =\frac{1}{2^n} \int_{\Delta_{0,T}^n}\wh{\mathcal{C}}_{0,T}^{\mathbb{P},[w]}(t_1,\dots,t_n)\dd t_1\cdots \dd t_n.
    \]
\end{proof}

The lemma shows in particular that, by restricting to words in which the 
Brownian augmentation appears only through adjacent pairs $\blue{ii}$, the $\Psi$-coordinates of $\wh{\bmu}$ contain the same 
integrated correlators as in the deterministic time-augmentation case 
$A_t=t$, up to the deterministic factors $2^{-n}$. Hence the following statement follows directly from 
Proposition~\ref{prop:moment_determinisme}, respectively from 
Corollary~\ref{cor:full_exp_sig_first_level}.
\begin{proposition}\label{prop:brownian_aug_characteristic}
	Let $A$ and $\mathbb{P}$ be as in Lemma~\ref{lem:expected_sig_bnrownian} such that the one-dimensional marginals of $\mathbb{P}$ satisfy \eqref{eq:moment-deter-real}. Whenever well-defined, we have
	\[
	\widehat{\boldsymbol\mu}_{0,T}^{\mathbb P}(A)
	=
	\widehat{\boldsymbol\mu}_{0,T}^{\mathbb Q}(A)
	\quad\Longleftrightarrow\quad
	\mathbb P=\mathbb Q,
	\]
	and
	\[
	\boldsymbol\mu_{0,T}^{\mathbb P}(A)
	=
	\boldsymbol\mu_{0,T}^{\mathbb Q}(A)
	\quad\Longleftrightarrow\quad
	\mathbb P=\mathbb Q.
	\]
\end{proposition}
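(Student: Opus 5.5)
The plan is to reduce both equivalences to the deterministic-augmentation argument of Proposition~\ref{prop:moment_determinisme}, using Lemma~\ref{lem:expected_sig_bnrownian} in place of \eqref{eq:expected_gamma}. The direction ``$\Leftarrow$'' is immediate in both cases. Since $A$ is a Brownian motion independent of $X$, the joint law of $(A,X)$ is determined by $\mathbb P$, and so are the expected (partially symmetrized) signatures.

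For ``$\Rightarrow$'' in the symmetrized statement, I first pass from $\widehat{\bmu}^{\mathbb P}=\widehat{\bmu}^{\mathbb Q}$ to equality of all $\Psi$-coordinates $\langle [w],\widehat{\bmu}\rangle_\Psi$. This holds because $\widehat{\Psi}$ is a graded automorphism by Proposition~\ref{prop:partial_symmetrized_hopf}. I then restrict to words $w=w_1\blue{ii}w_2\blue{ii}\cdots w_n\blue{ii}$ with $w_l\in\mathcal W_{\mathcal A''}$, and apply \eqref{eq:brownian_gamma_formula}. This gives
\[
0=\int_{\Delta^n_{0,T}}\Bigl(\wh{\mathcal C}^{\mathbb P,[w]}_{0,T}-\wh{\mathcal C}^{\mathbb Q,[w]}_{0,T}\Bigr)(t_1,\dots,t_n)\,\dd t_1\cdots\dd t_n .
\]
By itself this is only one scalar identity per block tuple $(w_1,\dots,w_n)$, which is not enough. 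The missing test functions come from inserting additional pairs $\blue{ii}$. A word of the form
\[
w_1(\blue{ii})^{m_1}w_2(\blue{ii})^{m_2}\cdots w_n(\blue{ii})^{m_n}
\]
has empty $\mathcal A''$-blocks between consecutive pairs. The same tower-property and It\^o--Stratonovich computation as in Lemma~\ref{lem:expected_sig_bnrownian} should then yield
\[
2^{-\sum m_l}\int_{\Delta^n_{0,T}} \wh{\mathcal C}^{[w]}(t_1,\dots,t_n)\prod_{l=1}^n \frac{(t_{l+1}-t_l)^{m_l-1}}{(m_l-1)!}\,\dd t_1\cdots\dd t_n,\qquad t_{n+1}:=T .
\]
Here I use that $\wh{\mathcal C}$ only sees the time at which each nonempty block is evaluated, and that integrating out the intermediate Lebesgue variables produces these powers. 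The family $\{(t_{l+1}-t_l)^{m}/m!\}_{m\ge0}$ is exactly $\partial_t\langle \blue{j}^{m+1},\Sig{\mathrm{time}}_{t,t_{l+1}}\rangle$ up to sign, i.e.\ the time-augmentation family of Assumption~\ref{ass:mambo}.

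The induction \eqref{eq:induction_orthogonality} from the proof of Proposition~\ref{prop:moment_determinisme} then applies verbatim, with the time augmentation, and gives $\wh{\mathcal C}^{\mathbb P,[w]}=\wh{\mathcal C}^{\mathbb Q,[w]}$ almost everywhere. Continuity of the correlators upgrades this to equality everywhere, possibly after using continuity of paths and uniform integrability, or alternatively concluding only for a.e.\ time tuples and using path continuity to pass to all finite-dimensional laws. Moment determinacy \eqref{eq:moment-deter-real} together with Petersen's theorem then identifies all finite-dimensional distributions of $X$, hence $\mathbb P=\mathbb Q$ on $C_o$. For the unsymmetrized statement, $\bmu^{\mathbb P}=\bmu^{\mathbb Q}$ implies $\widehat{\bmu}^{\mathbb P}=\widehat{\bmu}^{\mathbb Q}$, since $\widehat{\bmu}=\symmetrizer(\bmu)$, so we conclude as in Corollary~\ref{cor:full_exp_sig_first_level}.

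The main obstacle is the generalized Brownian formula with repeated pairs $(\blue{ii})^{m}$. One must verify that the Stratonovich-to-It\^o reduction, applied iteratively, pairs adjacent $\blue{i}$-letters, and that the martingale terms vanish in expectation after conditioning on $X$, which requires integrability. I would attempt this by induction on the total number of pairs. At each step I would convert the outermost $\circ\dd A\circ\dd A$ into $\frac12\dd t$ plus a martingale, as in the lemma. The point to check is that odd-position cross terms contribute nothing, because the integrand is deterministic given $X$ and $A$ is independent of $X$.
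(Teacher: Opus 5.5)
Your proposal is correct and follows the paper's route. The paper likewise observes that the $\Psi$-coordinates of $\widehat{\bmu}$ on words in which the Brownian letter appears only in adjacent pairs $\blue{ii}$ reproduce the time-augmented $\Psi$-coordinates up to the factors $2^{-n}$, and then invokes Proposition~\ref{prop:moment_determinisme}, respectively Corollary~\ref{cor:full_exp_sig_first_level}. Note that the ``main obstacle'' you flag is already settled by Lemma~\ref{lem:expected_sig_bnrownian}: the blocks $w_l\in\mathcal W_{\mathcal A''}$ there may be empty, so your words $w_1(\blue{ii})^{m_1}\cdots w_n(\blue{ii})^{m_n}$ are of the lemma's form, and your kernel formula follows with no new It\^o--Stratonovich computation by integrating out the time variables attached to empty blocks.
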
 \begin{remark}\label{rem:AMX}
We expect that the above argument can be extended to more general stochastic
augmentations. For instance, one may replace the Brownian augmentation by
suitable semimartingales, provided their iterated integrals generate sufficiently
rich deterministic testing kernels after taking expectations. The case of
non-independent augmentations is more delicate, since the factorization of the
$X$-correlators from the augmentation no longer holds directly. We leave a
systematic treatment of such extensions for future work.
\end{remark}

\section{Numerics}\label{sec:numerics}
At the beginning of Section~\ref{sec:expected_signature}, we derived formulas for
expected signatures which separate the probabilistic and deterministic parts of
the computation: expectations enter only through correlators, while the
remaining integration is performed against the augmentation. In the symmetrized setting, and in particular if 
$|\mathcal A''|=1$, the computation of the expected signature can be
factorized into:
\begin{itemize}
    \item[1.)] Compute $\Psi$-coordinates  $\langle w,\widehat{\boldsymbol\bmu}_{0,T}\rangle_\Psi
        =
        \int_{\Delta}
        \widehat{\mathcal C}^{[w]}_{0,T}(\mathbf t)\,\dd A_\mathbf t$, e.g. with higher-order quadrature methods.

    \item[2.)] Transform back into conventional expected-signature coordinates
    $\langle w, \widehat{\boldsymbol\mu}\rangle
        =
        \langle \Psi^{-1}_w,\wh{\bmu}_{0,T}\rangle_\Psi$.
\end{itemize}
We stress that the second step is only necessary when the application requires the expected signature in the standard coordinates. In many situations, see in particular in Section~\ref{sec:control}, one may work directly with the $\Psi$-coordinates, since the transform is linear and invertible. Nevertheless, the second step only requires a one-time implementation of the coordinate transformation described in Section~\ref{sec:pathwise_transform}. The corresponding code, together with all the other implementations relevant for this work, is available at
\url{https://github.com/lucapelizzari/expected_signature_transforms}.

Apart from the computational advantages of $\Psi$-coordinates illustrated in the last sections, another interesting aspect is that they naturally suggest a mixed grading with respect to subalphabets. Let $\mathcal{A}=\mathcal{A}'\cup\mathcal{A}''$ be a decomposition into two disjoint subalphabets. Recalling from Section~\ref{sec:tensor_shuffle_algebra} the length of a word $|i_1\cdots i_n|:=n$, we can similarly denote by $|\cdot|_{\mathcal{A}'}$ and $|\cdot|_{\mathcal{A}''}$ the number of letters from the corresponding subalphabets. In particular, we define the mixed-degree words
\begin{equation}\label{eq:mixed_degree_words}
    \mathcal{W}_{\mathcal{A}}^{(n_A,n_X)}
    :=
    \left\{
        w\in\mathcal{W}_{\mathcal{A}}:
        |w|_{\mathcal{A}'}=n_A,\,
        |w|_{\mathcal{A}''}=n_X
    \right\},
    \qquad
    (n_A,n_X)\in\mathbb{N}_0^2.
\end{equation}
In particular, the spaces $T((\mathbb{R}^d))$ and $\mathbb{R}\langle A\rangle$ introduced in Section~\ref{sec:tensor_shuffle_algebra} can be truncated according to this mixed grading in the natural way, namely by bounding separately the word lengths associated with the two subalphabets. Depending on the application, it may be sufficient to have a higher truncation degree in only one of the subalphabets while keeping the other one low; see, for example, Section~\ref{sec:control} below. The resulting reduction in dimension, compared with increasing both truncation levels simultaneously, makes it possible to reach substantially higher degrees.

\subsection{Expected signature of augmented fractional Brownian motions}\label{sec:numerics_fbm}
In all the numerical examples below, we will work with time-augmented fractional Brownian motion $Y_t=(t,X_t^H)$ for $t\in [0,1]$. In particular, we have $|\mathcal A'|=|\mathcal A''|=1$ and we set $A_t=t$. Here
$X^H=(X_t^H)_{t\in[0,1]}$ denotes a one-dimensional centered continuous Gaussian
process with covariance
\[
    R_H(s,t)
    :=
    \mathbb E[X_s^H X_t^H]
    =
    \frac12\left(s^{2H}+t^{2H}-|t-s|^{2H}\right),
    \qquad s,t\geq0,\quad H\in(0,1).
\]
Since both alphabets only have one letter, we simply write $\mathcal{A}'=\{\blue{\bullet}\}$ and $\mathcal{A}''=\{\orange{\bullet}\}$, so that $Y^{\blue{\bullet}}_t=t$ and $Y_t^{\orange{\bullet}}=X_t^H$. For words coming only from one alphabet, e.g. $w\in \mathcal{W}_{\mathcal{A}''}=:\mathcal{W}_{\orange{\bullet}}$, we write $w=\orange{\bullet}\cdots \orange{\bullet}=\orange{\bullet}^{k}$ if $|w|=k$. Moreover, for the mixed-degrees in \eqref{eq:mixed_degree_words} we simply write $|\cdot |_{\mathcal{A}'}=|\cdot |_{\blue{\bullet}}$ and $|\cdot |_{\mathcal{A}''}=|\cdot |_{\orange{\bullet}}$.

Since $X$ is a Gaussian process, Example~\ref{ex:Gaussian_isserlis} provides closed-form expressions for its correlators $\mathcal{C}$. To compute the expected signature $\bgamma$ in $\Psi$-coordinates in \eqref{eq:expected_gamma}, we exploit that $A_t=t$ and show in the following remark that, under a mixed truncation of degree $(n_A,n_X)$, every integral appearing in \eqref{eq:expected_gamma} can be reduced to an integral over a simplex of dimension at most $n_X$.

\begin{remark}
    Any word $w \in \mathcal{W}_{\mathcal{A}}^{(n_A,n_X)}$ can be written as $w={w_0} \blue{\bullet}{w_1} \blue{\bullet}\cdots \blue{\bullet}{w_{n_A}}$, where $w_j= \orange{\bullet}^{k_j}$ with $k_j\geq 0$ and $k_0+\cdots +k_{n_A} = n_X$. Let $J(w)= \{j \in \{1,\dots,n_A\}: w_{j-1}\neq \varnothing \}= \{j_1,\dots,j_p \}$, and define $a_l:=j_{l+1}-j_l-1$, $j_{l+1}:=p$. Clearly $p \leq n_X$, and from the general formula \eqref{eq:mj_identity}, we know \begin{align}
\left\langle w,\bmu_{0,T}\right\rangle_\Psi
&=
\int_{\Delta_{0,T}^{n_A}}
\mathcal C_{0,T}^w(t_1,\ldots,t_{n_A})
\,\dd t_1\cdots\dd t_{n_A}
\notag\\
&=
\frac{1}{\prod_{j=0}^{n_A}|w_j|!}
\int_{\Delta_{0,T}^{n_A}}
\mathbb E\left[
    X_T^{|w_{n_A}|}
    \prod_{r=1}^{p}
    X_{t_{j_r}}^{|w_{j_r-1}|}
\right]
\,\dd t_1\cdots\dd t_{n_A}
\notag\\
&=
\frac{1}{\prod_{j=0}^{n_A}|w_j|!}
\int_{\Delta_{0,T}^{p}}
\mathbb E\left[
    X_T^{|w_{n_A}|}
    \prod_{r=1}^{p}
    X_{s_r}^{|w_{j_r-1}|}
\right]
\prod_{r=0}^{p}
\frac{(s_{r+1}-s_r)^{a_r}}{a_r!}
\,\dd s_1\cdots\dd s_p,\label{eq:reduced_integrals}
\end{align} where we use the conventions $s_0=0$ and $s_{p+1}=T$. Thus, the dimension of the reduced integration domain is bounded by
$\min\{n_A,n_X\}$. Consequently, whenever either $n_A$ or $n_X$ is
small, since the expectations in \eqref{eq:reduced_integrals} are available
explicitly, the resulting low-dimensional integrals can be computed
efficiently using, for instance, Gauss--Legendre quadrature. For
higher-dimensional integrals, one may instead employ quasi-Monte Carlo
integration.
\end{remark}

\begin{example}\label{ex:n_A=2} From formula \eqref{eq:reduced_integrals}, and the fact that $X^H$ is centered and Gaussian, we have $\langle w, \bmu_{0,T}\rangle_\Psi=0$ whenever $n_X\in 2\mathbb{N}+1$. The first non-trivial process degree is therefore $n_X=2$. By \eqref{eq:reduced_integrals} and the definition of $R_H$, all
    resulting integrals are finite linear combinations of terms which,
    up to explicit multiplicative constants, are of the form  \[
 \Lambda_T(a,b)
        :=
        T^{2Hb_1}
        \int_0^T
        s^{a_0+2Hb_2}
        (T-s)^{a_1+2Hb_3}
        \dd s, \qquad \begin{cases}
    b_1,b_2,b_3 \in \{0,1\}  & b_1+b_2+b_3=1 \\
    a_0,a_1 \in \mathbb{N}_0 & a_0+a_1=n_A-1
\end{cases}.
\] After the change of variables $s=Tu$, we obtain the closed-form
    expression
    \[
        \Lambda_T(a,b)
        =
        T^{n_A+2H}
        \mathrm B\bigl(
            a_0+2Hb_2+1,
            a_1+2Hb_3+1
        \bigr),
    \]
    where $\mathrm B$ denotes the beta function $B(x,y)=\int_0^1u^{x-1}(1-u)^{y-1}\dd u$. Consequently, all expected shear coordinates of mixed degree
    $(n_A,2)$ admit closed-form expressions. A systematic investigation of closed-form or recursive representations for general mixed degrees would be interesting, but lies beyond the scope of this paper.

\end{example}

\subsection{Weak approximation error}\label{sec:numerical_integators}
In this section, we illustrate the numerical errors arising when expected signatures are computed using the conventional Monte Carlo approach, and highlight the advantages of representing them through the $\Psi$-coordinates. The conventional method consists of sampling
piecewise-linear approximations of $X^H$, computing the signature of each sample
path, for instance using standard libraries such as \emph{iisignature}, and then
averaging the resulting signature coordinates to approximate $\boldsymbol\mu$. This  procedure contains two sources of error: the Monte Carlo
sampling error and the weak discretization bias induced by the piecewise-linear
approximation. More precisely, if $\boldsymbol\mu^{\Delta t}$ denotes the
expected signature of the time-augmented piecewise-linear interpolation of
$X^H$ on a grid of mesh size $\Delta t$, then we observe numerically, and prove
in Appendix~\ref{app:weak_error}, that for every fixed truncation level $K$, we have $ \Vert
        \pi_{\leq K}\big(\boldsymbol\mu^{\Delta t}\big)
        -
        \pi_{\leq K}\big(\boldsymbol\mu\big)
    \Vert
    =
    O((\Delta t)^{2H})$.
For small Hurst parameters $H$, this rate is extremely slow, and consequently, the
number of discretization points required to reach even moderate accuracy quickly
becomes computationally infeasible.\footnote{For example, on a uniform grid with $\Delta t=1/N$ and $H=0.1$, the requirement $\Delta t^{2H}\leq 10^{-3}$ implies $N\geq 10^{15}$.}

In Figure~\ref{fig:fbm_runtime_error} we compare the conventional Monte Carlo
estimator of $\langle w,\bmu^{\Delta_t}\rangle $ described above with the correlator-based computation via $\langle \Psi^{-1}_w,\boldsymbol\mu \rangle_\Psi$
for the time-augmented fractional Brownian motion, truncated at level $K=4$. For
the Monte Carlo estimator we use $M=2^{18}$ sample paths and vary the number of
discretization points from $N=2^3$ to $N=2^{11}$; the reported runtime only includes
the signature computation and averaging step, excluding the cost of
sampling the fractional Brownian paths. For the $\Psi$-coordinates we compute the Gaussian correlators explicitly and integrate them by
deterministic quadrature, using increasing quadrature orders. Errors are measured
as the maximal absolute coordinate error with respect to a high-accuracy
quadrature reference. The plot illustrates the numerical advantage of separating
expectation and integration: while the conventional Monte Carlo estimator is
limited by the slow weak discretization rate of the piecewise-linear
approximation, especially for small Hurst parameters, the change of coordinates method reaches substantially smaller errors at smaller 
comparable runtimes.

\begin{figure}[h]
    \centering
    \includegraphics[width=0.75\textwidth]{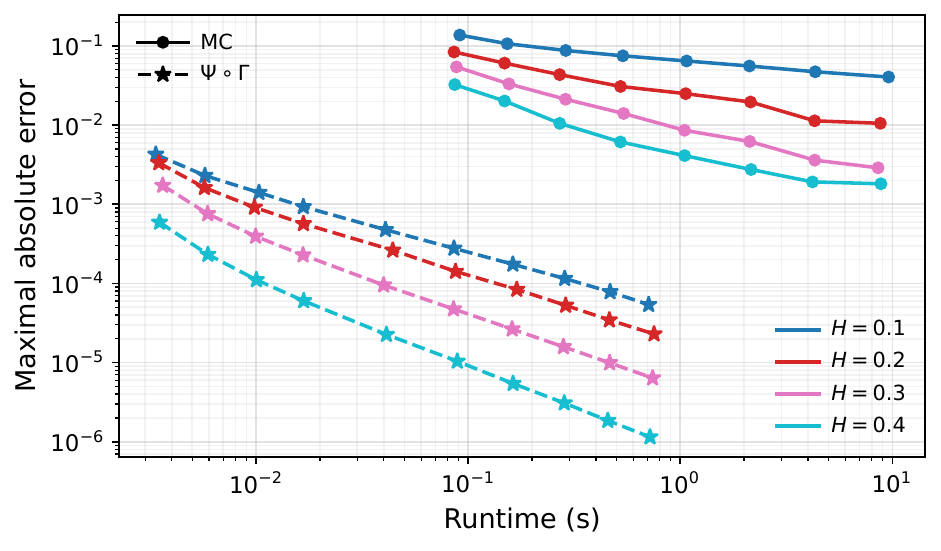}
    \caption{
    Runtime versus maximal absolute error for the computation of the truncated
    expected signature of the time-augmented fractional Brownian motion.
    Solid lines with circular markers correspond to the conventional Monte Carlo
    estimator based on piecewise-linear samples, while dashed lines with star
    markers correspond to the correlator-based $\Psi$ transform method using
    deterministic quadrature. Colors indicate the Hurst parameter $H$.
    }
    \label{fig:fbm_runtime_error}
\end{figure}

\subsection{Optimal tracking of fractional Brownian motion}\label{sec:control}
Let $(\Omega,\mathcal{F},\mathbb{F}=(\mathcal{F}_t)_{t\in[0,T]},\mathbb{P})$ be a complete filtered
probability space supporting an $\mathbb{F}$-adapted fractional Brownian
motion $X^H$. We study the stochastic control problem
\begin{equation}\label{eq:control_problem}
    V_0=\inf_{\alpha\in\mathcal{U}}
    \frac12
    \mathbb{E}\left[
        \int_0^T
        \left(
            (Y_t^\alpha)^2+\kappa\alpha_t^2
        \right)
        \dd t
    \right],
    \qquad
    Y_t^\alpha
    :=
    Y_0+\int_0^t\alpha_s\,\dd s-X_t^H,
\end{equation}
where $Y_0\in\mathbb{R}$, $\kappa>0$ is a penalization parameter, and
$\mathcal{U}$ denotes the set of real-valued, $\mathbb{F}$-progressively
measurable processes $\alpha=(\alpha_t)_{t\in[0,T]}$ satisfying $\mathbb{E}\left[
        \int_0^T|\alpha_t|^2\,\dd t
    \right]
    <\infty.$ The controlled process $Y^\alpha$ represents the \emph{tracking error} between the finite-variation control trajectory and the fractional Brownian motion $X^H$. Problems of this type have been studied in \cite{bank2017hedging} and, more recently, in the context of signature-based stochastic control in \cite{bank2024stochastic}. Loosely speaking, they describe the trade-off between closely tracking a frictionless target strategy (here $X^H$) and limiting trading costs caused by rapid adjustments of the actual position. In particular, the minimal tracking costs have an explicit solution, given in \cite[Theorem 5.2]{bank2024stochastic}, which we will use as a benchmark.

\begin{figure}[t]
    \centering
    \includegraphics[width=\textwidth]{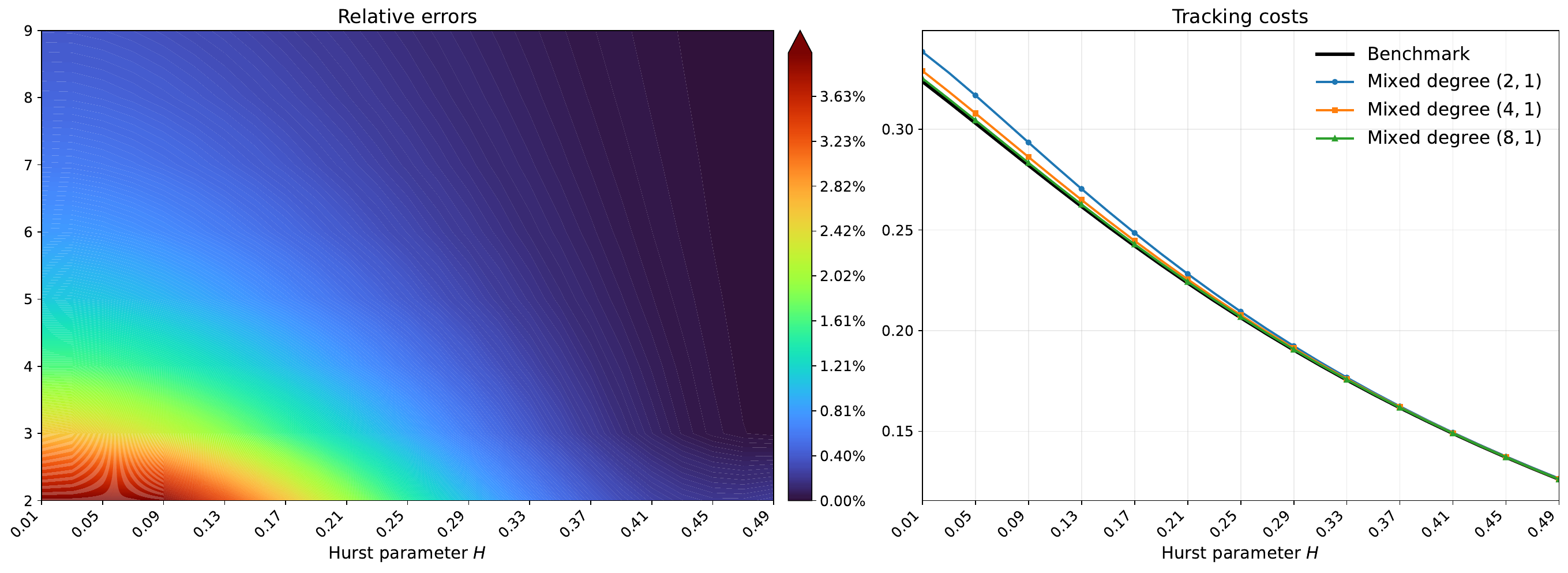}
    \caption{Left: Relative approximation errors of the mixed-degree controls $(N_A,1)$ with respect to the benchmark, across Hurst parameters $H$ and time degrees $N_A$. Right: Minimal tracking costs obtained from expected signatures in $\Psi$-coordinates for selected mixed degrees $(N_A,1)$, compared with the benchmark. Recall that the  corresponding mixed degree in the optimization problem is $(2N_A+3,2)$; see \eqref{eq:shear_tracking_problem}. Parameters: $Y_0=0, \kappa=0.1$ and $T=1$.}
    \label{fig:tracking_heatmap_costs}
\end{figure}Inspired by signature-methods in stochastic control initiated by \cite{kalsi2020optimal}, we consider here the class of mixed-degree signature functionals in $\Psi$-coordinates, that is  \[
\mathcal{U}_{N_A,N_X}^\Psi = \left \{\alpha_t = \langle \ell, \mathrm{Sig}(A,X)_{0,t}\rangle_\Psi: \ell \in  \mathbb{R}^{N_A,N_X}\langle \{\blue{\bullet},\orange{\bullet}\}\rangle \right  \} \subseteq \mathcal{U}, \qquad N_A,N_X \in \mathbb{N},
\] where $ \mathbb{R}^{N_A,N_X}\langle \{\blue{\bullet},\orange{\bullet}\} \rangle$  consists of linear combination of words $w \in \mathcal{W}_{\{\blue{\bullet},\orange{\bullet}\}}^{(n_A,n_X)}$ (see~\eqref{eq:mixed_degree_words}) with $n_A\leq N_A$ and $n_X \leq N_X$.

\begin{remark}
    By Proposition~\ref{prop:Psi},  together with the approximation result
    \cite[Theorem~4.7]{bank2024stochastic}, one may obtain convergence 
    \[
        V_0^{(N_A,N_X)}
        :=
        \inf_{\alpha\in\mathcal{U}_{(N_A,N_X)}^{\Psi}}
        \frac12
        \mathbb{E}\left[
            \int_0^T
            \left(
                (Y_t^\alpha)^2+\kappa\alpha_t^2
            \right)
            \dd t
        \right]
        \longrightarrow V_0,
        \qquad
        \min\{N_A,N_X\}\longrightarrow\infty.
    \]
    A similar conclusion can be obtained for multivariate processes $X$
    using partially symmetrized $\wh{\Psi}$-controls.
\end{remark}
Now consider a signature-control in $\Psi$-coordinates: $\alpha_t=\langle \ell, \mathrm{Sig}(A,X^H)_{0,t}\rangle_\Psi$,  by definition \begin{align*}
Y_t^\alpha & = \langle Y_0 \varnothing, \mathrm{Sig}(A,X)_{0,t}\rangle_\Psi+\int_0^t\langle \ell, \mathrm{Sig}(A,X^H)_{0,s}\rangle_\Psi \dd s - \langle \orange{\bullet},\mathrm{Sig}(A,X)_{0,t}\rangle_\Psi \\ & = \langle Y_0\varnothing+ \ell \blue{\bullet}-\orange{\bullet},\mathrm{Sig}(A,X)_{0,t}\rangle_\Psi.
\end{align*} Moreover, recalling the product $\bullet$ from \eqref{eq:gamma_dual_product_explicit}, we observe \[
(Y_t^\alpha)^2= \langle (Y_0\varnothing+ \ell \blue{\bullet}-\orange{\bullet})^{\bullet 2}, \mathrm{Sig}(A,X)_{0,t}\rangle_\Psi, \qquad (\alpha_t)^2=\langle \ell^{\bullet 2},\mathrm{Sig}(A,X)_{0,t}\rangle_\Psi, 
\] and thus, by Corollary~\ref{cor:rough_gamma} we can translate it into the deterministic optimization problem \begin{equation}\label{eq:shear_tracking_problem}
 V_0^{(N_A,N_X)} =\inf_{\ell \in T^{(N_A,N_X)}} \left \langle \frac12(Y_0\varnothing +\ell \blue{\bullet}-\orange{\bullet})^{\bullet 2}\blue{\bullet}+\frac12\kappa \ell^{\bullet 2}\blue{\bullet}, \bmu_{0,T}\right \rangle_\Psi.
\end{equation}
Note that, when the control is truncated at mixed degree $(N_A,N_X)$,
the words appearing in \eqref{eq:shear_tracking_problem} have mixed
degree at most $(2N_A+3,2N_X)$. When comparing the resulting
approximations with the exact benchmark value $V_0$, we observe that
already $N_X=1$ yields very accurate results, allowing for 
large values of $N_A$. This is illustrated for several Hurst parameters $H\in(0,1/2)$ in
Figure~\ref{fig:tracking_heatmap_costs}. The heat map on the left-hand
side shows the relative error
${(V_0^{(N_A,1)}-V_0)}/{V_0}$
where $N_X=1$ is fixed and $N_A\in\{1,\ldots,9\}$ is varied along the
$y$-axis. We emphasize that the largest truncation considered here,
namely $N_A=9$, requires $\Psi$-coordinates up to mixed
degree $(21,2)$. For comparison, the mixed truncation $(21,2)$ contains only $2299$
entries, whereas the full truncation $(21,21)$ would
contain more than $2.1\times 10^{12}$ entries, which would already be a challenging task in terms of memory. In Table~\ref{tab:tracking_costs} we summarize the minimal tracking costs $V_0$ and the signature approximations $V^{(N_A,1)}$ for the same mixed-degrees. The numerical result show fast convergence to the benchmark as the time
degree $N_A$ increases. The largest errors occur in the roughest regime,
that is, for $H$ close to zero, whereas the approximation becomes
increasingly accurate as $H$ approaches $1/2$. Interestingly, the rather low degree $N_X=2$ suffices for very accurate approximations.

\begin{table}[t]
\centering

\setlength{\tabcolsep}{3pt}
\renewcommand{\arraystretch}{1.12}

\resizebox{\textwidth}{!}{%
\begin{tabular}{lrrrrrrrrrrrrr}
\toprule

& \multicolumn{13}{c}{\textbf{Hurst parameter $H$}} \\
\cmidrule(lr){2-14}
& $0.01$ & $0.05$ & $0.09$ & $0.13$ & $0.17$
& $0.21$ & $0.25$ & $0.29$ & $0.33$ & $0.37$
& $0.41$ & $0.45$ & $0.49$ \\
\midrule
\textbf{Exact}
& 0.324 & 0.303 & 0.282 & 0.262 & 0.242
& 0.224 & 0.206 & 0.190 & 0.175 & 0.162
& 0.149 & 0.137 & 0.126 \\
\midrule
$(2,1)$
& 0.3385 & 0.3169 & 0.2935 & 0.2704 & 0.2486
& 0.2282 & 0.2095 & 0.1923 & 0.1767 & 0.1624
& 0.1493 & 0.1374 & 0.1264 \\
$(3,1)$
& 0.3322 & 0.3107 & 0.2885 & 0.2667 & 0.2459
& 0.2264 & 0.2083 & 0.1916 & 0.1762 & 0.1621
& 0.1491 & 0.1372 & 0.1262 \\
$(4,1)$
& 0.3291 & 0.3080 & 0.2863 & 0.2650 & 0.2446
& 0.2255 & 0.2077 & 0.1912 & 0.1760 & 0.1620
& 0.1490 & 0.1371 & 0.1262 \\
$(5,1)$
& 0.3274 & 0.3064 & 0.2849 & 0.2639 & 0.2438
& 0.2249 & 0.2073 & 0.1910 & 0.1758 & 0.1619
& 0.1490 & 0.1371 & 0.1262 \\
$(6,1)$
& 0.3264 & 0.3055 & 0.2842 & 0.2633 & 0.2434
& 0.2246 & 0.2071 & 0.1908 & 0.1757 & 0.1618
& 0.1490 & 0.1371 & 0.1262 \\
$(7,1)$
& 0.3258 & 0.3049 & 0.2837 & 0.2629 & 0.2431
& 0.2244 & 0.2069 & 0.1907 & 0.1757 & 0.1618
& 0.1490 & 0.1371 & 0.1262 \\
$(8,1)$
& 0.3254 & 0.3046 & 0.2834 & 0.2627 & 0.2429
& 0.2242 & 0.2068 & 0.1906 & 0.1756 & 0.1618
& 0.1490 & 0.1371 & 0.1262 \\
$(9,1)$
& 0.3251 & 0.3043 & 0.2832 & 0.2625 & 0.2428
& 0.2241 & 0.2068 & 0.1906 & 0.1756 & 0.1618
& 0.1490 & 0.1371 & 0.1262 \\
$(10,1)$
& 0.3249 & 0.3041 & 0.2830 & 0.2624 & 0.2427
& 0.2241 & 0.2068 & 0.1906 & 0.1756 & 0.1618
& 0.1490 & 0.1371 & 0.1262 \\
\bottomrule
\end{tabular}%
}\caption{Minimal tracking costs of fractional Brownian motion, benchmark and expected signature in $\Psi$-coordinates with different mixed-degrees. Parameters: $Y_0=0, \kappa=0.1$ and $T=1$.}
\label{tab:tracking_costs}
\end{table}

\bibliographystyle{plain}
\bibliography{biblio}

\appendix 

\section{Weak error of  naive expected signature approximation}\label{app:weak_error}
In this section we illustrate one of the main motivations for this work, namely the central issue of naive  numerical approximations of expected signatures for augmented stochastic processes $Y_t=(A_t,X_t)$. The typical procedure is to replace $Y$ with a piecewise linear approximation on some grid $\mathcal{P} = \{0=t_0<t_1\cdots <t_N=T \}$, that is $$Y^\mathcal{P}_t = Y_{t_k} (1-\theta_k(t))+Y_{t_{k+1}}\theta_k(t), \qquad t \in [t_k,t_{k+1}], \qquad \theta_k(t)= \frac{t-t_k}{t_{k+1}-t_k},$$ and then proceed with a Monte-Carlo approximation of $\bmu_t(\mathcal{P})= \mathbb{E}[\Sig{Y^\mathcal{P}}_{0,t}]$. The following lemma illustrates that this procedure is quite suboptimal in terms of convergence rates. More precisely, we consider the simple example $Y^H_t=(t,X^H_t)$, where $X^H$ is a one-dimensional fractional Brownian motion with Hurst parameter $H\in (0,1/2)$, and show in the following lemma that the weak error is $2H$. 

\begin{lemma}\label{lem:weak_rates}
    Suppose $\mathcal{P}$ is a uniform partition and set $\Delta t:=\frac{1}{N}$. We have \begin{equation}\label{eq:weak_rate_2H}
    \big \Vert 
        \pi_{\leq K}(\bmu_T^H)
        -
        \pi_{\leq K}(\bmu_T^H(\mathcal{P})) \big \Vert 
    \le C_{K,T,H} \Delta t^{2H}.\end{equation} Moreover, the rate is sharp, in the sense that we can find (infinitely many) words $w$ such that $$
        \langle w,\bmu_T\rangle
        -
        \langle w,\bmu_T^H(\mathcal{P})\rangle
    =
    o(\Delta t^{2H}).$$
\end{lemma}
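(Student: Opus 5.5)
The plan is to do all estimates in $\Psi$-coordinates, where expectation and time integration separate, and then transfer them back to standard coordinates using that $\Psi^{-1}$ is graded and linear.

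\emph{Reduction to $\Psi$-coordinates.} The interpolant $Y^{\mathcal P}=(t,X^{\mathcal P})$ has bounded variation, and its time component is still exactly $A_t=t$. Hence Proposition~\ref{prop:psi_inv} applies pathwise to $Y^{\mathcal P}$. For $Y^H$ we use Corollary~\ref{cor:rough_gamma}, or simply the one-letter formula \eqref{eq:gamma_signature_sym_univariate}, since $|\mathcal A''|=1$. By Proposition~\ref{prop:exp_sig_prop}, for every word $w$ with $\Delta_{\mathcal A'}(w)=((w_0,\dots,w_k),\blue{\bullet}^k)$,
\[
\langle w,\bmu_T^H\rangle_\Psi-\langle w,\bmu_T^H(\mathcal P)\rangle_\Psi
=\int_{\Delta^k_{0,T}}\Big(\mathcal C^w(\mathbf t)-\mathcal C^{w,\mathcal P}(\mathbf t)\Big)\,\dd t_1\cdots\dd t_k .
\]
Here $\mathcal C^{w,\mathcal P}$ is the correlator of the normalized monomials $\prod_j (X^{\mathcal P}_{t_j})^{|w_{j-1}|}/|w_{j-1}|!$, with $t_{k+1}=T$. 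Both $X^H$ and $X^{\mathcal P}$ are centered Gaussian, so Example~\ref{ex:Gaussian_isserlis} writes each correlator as a finite sum over pairings of products of covariances, namely $R_H$ and $R^{\mathcal P}(s,t):=\mathbb E[X^{\mathcal P}_sX^{\mathcal P}_t]$ respectively. Telescoping each product of covariances gives
\[
\big|\mathcal C^w-\mathcal C^{w,\mathcal P}\big|\le C_w\,\sup_{s,t\le T}\big|R_H(s,t)-R^{\mathcal P}(s,t)\big|,
\]
because both covariances are bounded by $T^{2H}$. Since $\Psi^{-1}$ is graded, every $\pi_{\le K}$-coordinate is a fixed finite linear combination of $\Psi$-coordinates of level $\le K$. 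Therefore \eqref{eq:weak_rate_2H} follows once we show $\sup|R_H-R^{\mathcal P}|\le C\Delta t^{2H}$.

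\emph{Key step: covariance interpolation.} Observe that $R^{\mathcal P}$ is exactly the bilinear (tensor-product) piecewise-linear interpolant of $R_H$ on the grid $\mathcal P\times\mathcal P$. Indeed, $X^{\mathcal P}_s=(1-\theta)X_{t_k}+\theta X_{t_{k+1}}$ is linear in the grid values. We interpolate in $s$ first and then in $t$. The function $R_H(s,t)=\tfrac12(s^{2H}+t^{2H}-|t-s|^{2H})$ is uniformly $2H$-Hölder in each variable, with a constant independent of the other variable. One-dimensional linear interpolation of a $2H$-Hölder function on mesh $\Delta t$ has sup-error at most $[f]_{2H}\Delta t^{2H}$. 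Moreover, linear interpolation is a sup-norm contraction, which controls the second interpolation step. Together this gives $\sup|R_H-R^{\mathcal P}|\le 2[R_H]_{2H}\Delta t^{2H}$, and the constants depend only on $K,T,H$. This uniform covariance estimate is where the rate $2H$, rather than the naive pathwise rate $H$ that Cauchy--Schwarz would give, comes from. I expect it to be the main point requiring care.

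\emph{Sharpness.} The displayed statement says $o(\Delta t^{2H})$, but it should be read as an exact order: the error is not $o(\Delta t^{2H})$. The plan is to show $\langle w,\bmu_T^H\rangle-\langle w,\bmu_T^H(\mathcal P)\rangle\ge c\,\Delta t^{2H}$ with $c>0$ for infinitely many words. Take $w=\orange{\bullet}\orange{\bullet}\blue{\bullet}^n$ with $n\ge1$. The recursion \eqref{eq:implicit_psi} with $m=0$ gives $\Psi^{-1}_w=w$, so the standard and $\Psi$-coordinates of this word agree. Using \eqref{eq:reduced_integrals}, the coordinate equals
\[
\int_0^T \tfrac12\,\mathbb E[X_s^2]\,\frac{(T-s)^{n-1}}{(n-1)!}\,\dd s .
\]
On $[t_k,t_{k+1}]$, with $R_H(t_k,t_{k+1})=\tfrac12(t_k^{2H}+t_{k+1}^{2H}-\Delta t^{2H})$, a direct computation gives
\[
\mathbb E[(X^{\mathcal P}_s)^2]-s^{2H}
=\big[(1-\theta)t_k^{2H}+\theta t_{k+1}^{2H}-s^{2H}\big]-\theta(1-\theta)\Delta t^{2H}.
\]
The bracket is $\le0$ by concavity of $s\mapsto s^{2H}$. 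Hence $s^{2H}-\mathbb E[(X^{\mathcal P}_s)^2]\ge\theta(1-\theta)\Delta t^{2H}$. Integrating against the positive weight $(T-s)^{n-1}/(n-1)!$ yields a lower bound of the form $c_{n,T}\Delta t^{2H}$. This holds for every $n$, which proves sharpness along infinitely many words.
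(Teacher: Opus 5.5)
Your proof is correct. The upper bound follows the paper's argument. The paper also identifies $R^{\mathcal P}$ as the bilinear interpolant of $R_H$, asserts $\|R_H-R^{\mathcal P}\|_\infty\le C(\Delta t)^{2H}$, and concludes via Isserlis, Proposition~\ref{prop:exp_sig_prop} and the gradedness of $\Psi^{-1}$. Your two-step interpolation supplies the estimate the paper calls ``not difficult'': each partial map of $R_H$ is $2H$-H\"older with constant at most $1$, and interpolation is a sup-norm contraction, so $\sup|R_H-R^{\mathcal P}|\le 2(\Delta t)^{2H}$. Your remark that the interpolated time component is still $A_t=t$ is what justifies using the $\Psi$-representation for $Y^{\mathcal P}$.

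For sharpness you use the same words as the paper ($221^{\otimes\ell}$ there, i.e.\ $\orange{\bullet}\orange{\bullet}\blue{\bullet}^{n}$), but a different mechanism.
\begin{itemize}
\item The paper integrates $\mathbb E[(X^{\mathcal P}_t)^2]$ exactly on each cell. This gives the trapezoidal sum for $t\mapsto t^{2H}$ plus an explicit remainder, and the paper then shows the trapezoidal error is $o((\Delta t)^{2H})$ by smoothness away from $0$.
\item You bound the integrand pointwise, $s^{2H}-\mathbb E[(X^{\mathcal P}_s)^2]\ge\theta(1-\theta)(\Delta t)^{2H}$. Concavity of $s\mapsto s^{2H}$ then disposes of the trapezoidal part by its sign.
\end{itemize}
Your route is shorter and non-asymptotic. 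For $n=1$ it gives a difference $\ge\tfrac{T}{12}(\Delta t)^{2H}$ for every $N$ (with $N\Delta t=T$), which is exactly the leading term. It also needs no analysis of the trapezoidal rule near the singularity at $0$. The paper's exact cellwise identity, by contrast, separates the leading term from the trapezoidal correction explicitly.

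The paper's cellwise computation needs fixing before it proves sharpness. The remainder is $-\tfrac16(\Delta t)^{2H+1}$ per cell, hence $-\tfrac{N}{6}(\Delta t)^{2H+1}$ in total, which is of exact order $(\Delta t)^{2H}$. The paper prints $-\tfrac12(\Delta t)^{2H+1}$ instead, which would make the error $o((\Delta t)^{2H})$. Your reading of the displayed ``$=o(\Delta t^{2H})$'' as ``is not $o(\Delta t^{2H})$'' is therefore the right one, and your argument establishes it directly.
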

\begin{proof}
    First note that for any $s \in [t_k,t_{k+1}]$ and $t \in [t_j,t_{j+1}]$, we have \begin{align}
        R^\mathcal{P}(s,t)=\mathbb{E}[X^{H,\mathcal{P}}_sX^{H,\mathcal{P}}_t] & =(1-\theta_k(s))(1-\theta_j(t)) R(t_k,t_j)+(1-\theta_k(s))\theta_j(t)R(t_k,t_{j+1})\\ & \qquad +(1-\theta_j(t))\theta_k(s)R(t_{k+1},t_{j})+\theta_k(t)\theta_j(s) R(t_{k+1},t_{j+1}),
    \end{align} which is simply the bilinear interpolation of $R(t,s)= \mathbb{E}[X_s^HX_t^H]$. Since $R(t,s) = \frac{1}{2}(t^{2H}+s^{2H}-|t-s|^{2H})$, it is thus not difficult to show that $\Vert R-R^\mathcal{P} \Vert_{\infty;[0,T]^2} \leq C_{H,T}(\Delta t)^{2H}$. Moreover, an application of Isserlis formula seen in Example~\ref{ex:Gaussian_isserlis} then also immediately shows $$\left | \mathbb{E}\left [\prod_{l=1}^nX^H_{r_l}\right ]-\mathbb{E}\left [\prod_{l=1}^nX^{H,\mathcal{P}}_{r_l}\right ] \right | \leq C_{K,H,T} (\Delta t)^{2H}, \qquad 1 \leq n \leq K.$$
    Thus, we can readily conclude \eqref{eq:weak_rate_2H} as a consequence of our expected signature transform Proposition~\ref{prop:exp_sig_prop}.

    For the second part, let us fix the word $w=221$, so that $\langle w,\bmu_T\rangle = \frac12\int_0^T\mathbb{E}[X_t^2]\dd t = \frac{T^{2H+1}}{2(2H+1)}$. On the other-hand, using the representation of $R^\mathcal{P}$ above for $s=t$, a direct computation shows \begin{align*}
    \langle w,\bmu_T(\mathcal{P})\rangle & = \sum_{[u,v]\in \mathcal{P}} R(u,u)\int_u^v (1-\theta(t))^2\dd t+ 2R(u,v)\int_u^v\theta(t)(1-\theta(t))\dd t+ R(v,v) \int_u^v\theta(t)^2\dd t  \\ & = \frac{\Delta t}{2} \sum_{[u,v] \in \mathcal{P}}(u^{2H}+v^{2H})- \frac{(\Delta t)^{2H+1}}{2}.
    \end{align*}
Thus we only have to show that the trapezoidal rule for $t\mapsto t^{2H}$ is of $o((\Delta t)^{2H})$, that is $$\frac{1}{(\Delta t)^{2H}}\left ( \int_0^Tt^{2H}\dd t- \frac{\Delta t}{2} \sum_{[u,v] \in \mathcal{P}}u^{2H}+v^{2H}\right )=:\varepsilon(\Delta t) \rightarrow 0, \qquad \Delta t \rightarrow 0.$$ Writing $\int_0^Tt^{2H}\dd t = \frac{(\Delta t)^{2H+1}}{2H+1}+ \sum_{[u,v] \in \mathcal{P},u \neq 0} \int_u^v t^{2H}\dd t$, we have $$\varepsilon(\Delta t) = (\Delta t)^{2H+1}\left (\frac{1}{2(2H+1)}-\frac12\right )+ \sum_{[u,v] \in \mathcal{P},u \neq 0} \left \{ \int_u^v t^{2H}\dd t-\frac{\Delta t}{2}(u^{2H}+v^{2H})\right \}.$$ Noting that $t \mapsto t^{2H}$ is $C^\infty$ on $[\Delta t,T]$, the claim follows from standard trapezoidal rules for smooth functions. The same conclusion holds for any word of the typ $w=221^{\otimes \ell}$ with $\ell \geq 1$, which concludes the proof.
\end{proof} 

\begin{figure}[t]
    \centering
    \includegraphics[width=0.75\textwidth]{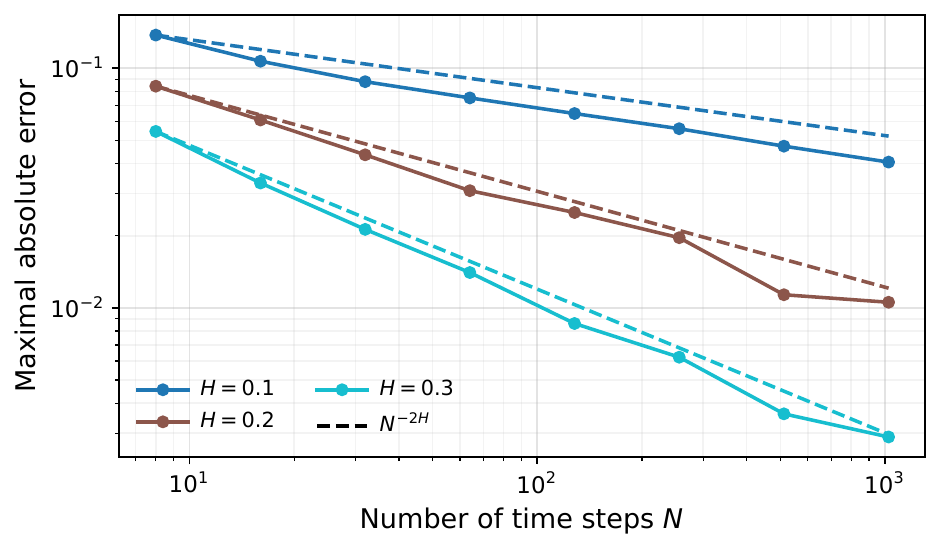}
    \caption{
    Weak error of the conventional Monte Carlo estimator for the
    expected signature of the time-augmented fractional Brownian motion. Solid
    lines correspond to the Monte Carlo errors,
    dashed lines show the reference rates $N^{-2H}$.
    }
    \label{fig:fbm_mc_timestep_error}
\end{figure}
We illustrate this numerically in Figure~\ref{fig:fbm_mc_timestep_error} for
$H\in\{0.1,0.2,0.3\}$. For increasing numbers of time steps $N$, corresponding
to the uniform mesh size $\Delta t=1/N$, we plot the left-hand side of
\eqref{eq:weak_rate_2H} with truncation level $K=4$. The norm is chosen as $\Vert (\mathbf a^{0},\dots,\mathbf a^{K})\Vert
    :=
    \max_{i=0,\dots,K}
    \Vert \mathbf a^{(i)}\Vert_{(\mathbb R^2)^{\otimes i}}$
The dashed reference lines show the rates $N^{-2H}$, normalized to the first
error value for each Hurst parameter. The observed errors are consistent with
the predicted weak rate $(\Delta t)^{2H}=N^{-2H}$.

\end{document}